\documentclass[reqno]{amsart}
\usepackage[utf8]{inputenc}
\usepackage[T1]{fontenc}
\usepackage{lmodern}
\usepackage[english]{babel}
\usepackage{amsmath,a4wide}
\usepackage{xfrac}
\usepackage{esint}
\usepackage{mathrsfs,bm,amsthm,mathtools,yfonts,amssymb,color,braket,booktabs,graphicx,graphics,amsfonts,comment,siunitx}

\newcommand{\R}{\mathbb{R}}

\newcommand{\N}{\mathbb{N}}
\newcommand{\V}{\mathbf{V}}

\newcommand{\IV}{\mathbf{IV}}
\newcommand{\e}{\mathbf{e}}
\newcommand{\lv}{\lvert}
\newcommand{\rv}{\rvert}
\newcommand{\lV}{\lVert}
\newcommand{\rV}{\rVert}

\DeclareMathOperator{\spt}{spt}

\DeclareMathOperator{\dv}{div}

\DeclareMathOperator{\ess}{ess\,sup}
\DeclareMathOperator{\Tan}{Tan}

\theoremstyle{plain}
\newtheorem{theorem}{Theorem}[section]
\newtheorem*{theorem*}{Theorem}
\newtheorem{lemma}[theorem]{Lemma}
\newtheorem*{lemma*}{Lemma}
\newtheorem{prop}[theorem]{Proposition}
\newtheorem*{prop*}{Proposition}

\newtheorem*{corollary*}{Corollary}
\theoremstyle{definition}

\newtheorem*{example*}{e.g.}
\newtheorem{remark}[theorem]{Remark}
\newtheorem*{remark*}{Remark}
\newtheorem{assumption}[theorem]{Assumption}
\newtheorem*{assumption*}{Assumption}
\numberwithin{equation}{section}
\newtheorem{definition}[theorem]{Definition}
\newtheorem*{definition*}{Definition}
	
\title[Generalized BV mean curvature flow with a critical forcing term]{A note on generalized BV mean curvature flow\\
	with a critical forcing term}
\author[K. Tashiro]{Kiichi Tashiro}
\keywords{geometric measure theory, mean curvature flow}
\subjclass{53E10 (primary), 49Q15}
\address{Dipartimento di Matematica, Universit\`{a} degli Studi di Milano, Via Saldini 50, I-20133 Milano (MI), Italy}
\email{kiichi.tashiro@unimi.it}

\begin{document}

\begin{abstract}
	In this note, we show that a weak mean curvature flow with critical forcing term obtained by Liu--Tonegawa (2024) satisfies the BV-type area change formula, that is, their flow is not only a Brakke flow but also a generalized BV flow, which is proposed by Stuvard--Tonegawa (2024). To establish this result, we identify minimal conditions under which a Brakke flow satisfies the area-change formula. As an application of our main theorem, we derive a lower bound for the extinction time of generalized BV flows with a critical forcing term. We also outline the proof of a compactness theorem for generalized BV flows.
\end{abstract}

\maketitle

\section{Introduction}
\label{sec: Intro}
Given an $ n $-dimensional submanifold $ \Gamma_0 \subset \R^{ n + 1 } $ and a vector field $ u : \R^{ n + 1 } \times [ 0 , \infty ) \to \R^{ n + 1 } $, the mean curvature flow (henceforth referred to MCF) with the vector field $ u $ is a one-parameter family $ \{ \Gamma ( t ) \}_{ t \geq 0 } $ of $ n $-dimensional manifolds in the Euclidean space $ \R^{ n + 1 } $ such that $ \Gamma ( 0 ) = \Gamma_0 $ and $ \Gamma ( t ) $ moves by the following motion law:
\[
	v ( x , t ) = h ( x , t ) + u^{ \perp } ( x , t ) \quad \text{ at } x \in \Gamma ( t ) , \ t > 0,
\]
where $ v ( x , t ) $ denotes the normal velocity vector of $ \Gamma ( t ) $ at $ x $, $ h ( x , t ) $ denotes the mean curvature of $ \Gamma ( t ) $ at $ x $, and $ u^{ \perp } ( x , t ) $ is the projection of $ u ( x , t ) $ onto $ ( \Tan_x \Gamma ( t ) )^{ \perp } $. When the given vector field $ u $ is not smooth, it is interesting to investigate  the best properties that can be expected, as well as the minimal requirements for the existence of solutions to the MCF.

In the case of $ n = 1 $, Liu--Tonegawa \cite{liu2024existence} recently established a time-global existence theorem of MCFs in Brakke's sense (see \cite{brakke1978motion}) with $ u $ satisfying the following ``critical'' condition:
\begin{equation}
\label{eq: critical}
	u \in L_{ loc }^{ \infty } ( [ 0 , \infty ) ; L^2 ( \R^2 ; \R^2 ) ) \cap L^2_{ loc } ( [ 0 , \infty ) ; W^{ 1 , 2 } ( \R^2 ; \R^2 ) ).
\end{equation}
The condition of criticality \eqref{eq: critical} describes the situation where the norm of $ u $ does not decay at small parabolic scales, which causes a bottleneck in the analysis. They pointed out that the critical nature posed a major obstacle to proving a BV-type area change formula. The main theorem of the present note overcomes this difficulty and shows that the flow constructed in \cite{liu2024existence} satisfies in fact the following area change formula.

\begin{theorem}
\label{thm:main result}
	Let $ N \in \N $ with $ N \geq 2 $. Let $ \{ V_t \}_{ t \in \R^+ } $ be a Brakke's MCF with a critical forcing term $ u \in L^\infty_{loc} ( [ 0 , \infty ) ; L^2 ( \R^n ) ) \cap L^2_{loc} ( [ 0 , \infty ) ; W^{ 1 , 2 } ( \R^2 ) ) $ and let $ \{ E_i ( t ) \}_{ t \geq 0 } $ be a family of sets of finite perimeter satisfying $ \lV \nabla \chi_{ E_i ( t ) } \rV \leq \lV V_t \rV $ for each $ t \geq 0 $ and each $ i = 1 , \ldots , N $, constructed in \cite{liu2024existence}. Then, for any test function $ \phi \in C^1_c ( \R^2 \times [ 0 , \infty ) ) $ and $ 0 \leq t_1 < t_2 < \infty $, we have
\begin{equation}
\label{eq: BV_intro}
	\int_{ E_i ( t ) } \phi \, d x \bigg|_{ t = t_1 }^{ t_2 } = \int_{ t_1 }^{ t_2 } \int_{ E_i ( t ) } \partial_t \phi \, d x d t + \int_{ t_1 }^{ t_2 } \int_{ \R^2 } \phi ( h + u ) \cdot \nu_{ E_i ( t ) } \, d \lV \nabla \chi_{ E_i ( t ) } \rV d t,
\end{equation}
where $ h = h ( \cdot , V_t ) $ is the generalized mean curvature of $ V_t $, $ \lV \nabla \chi_{ E_i ( t ) } \rV $ is the perimeter measure of $ E_i ( t ) $, and $ \nu_{ E_i ( t ) } $ is the unit outer normal vector of $ \lV \nabla \chi_{ E_i ( t ) } \rV $. A detailed explanation of the flows by \cite{liu2024existence} will be provided in Section \ref{subsec: review}.
\end{theorem}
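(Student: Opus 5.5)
We prove the area change formula \eqref{eq: BV_intro} by passing to the limit in the time-discrete approximation of \cite{liu2024existence}. The point is that the critical integrability of $u$ already gives enough control on $h$ and $u$ on $\lV V_t\rV$ to identify the limit of the velocity terms.

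\textbf{Step 1: a structural criterion.} First I would isolate a general statement. Suppose $\{V_t\}$ is a Brakke flow with forcing $u$, and $E_i(t)$ are sets of finite perimeter with $\lV\nabla\chi_{E_i(t)}\rV\le\lV V_t\rV$. Assume in addition:
\begin{itemize}
\item[(a)] $\int_{t_1}^{t_2}\int|h|^2+|u|^2\,d\lV V_t\rV dt<\infty$ locally;
\item[(b)] $t\mapsto\chi_{E_i(t)}$ is continuous in $L^1_{loc}$, or at least $1/2$-Hölder;
\item[(c)] $V_t$ is integral for a.e.\ $t$.
\end{itemize}
Then \eqref{eq: BV_intro} holds.

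To prove the criterion, I would view $\chi_{E_i}$ as a function of bounded variation in space-time. Condition (a) bounds $\lV\partial_t\chi_{E_i}\rV$ by the measure $\int|h+u|\,d\lV\nabla\chi_{E_i(t)}\rV dt$, which is finite by Cauchy--Schwarz since $\lV V_t\rV$ has locally bounded mass. The next task is to identify the normal velocity of the reduced boundary of $\{(x,t):x\in E_i(t)\}$ with $(h+u)\cdot\nu$. For this I use Brakke's perpendicularity theorem, $h\perp\Tan$ a.e.\ for integral varifolds. I then combine it with the fact that the Brakke inequality, tested with localized functions, forces the space-time tangent planes to move with velocity $h+u^\perp$. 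This is the Stuvard--Tonegawa mechanism: blow-ups at $\mathcal{H}^{n}\times dt$-a.e.\ points of the boundary of $E_i$ are static or translating half-spaces with the prescribed speed.

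\textbf{Step 2: verify (a) in the critical case.} For $n=1$ and $u$ as in \eqref{eq: critical}, the integral $\int|u|^2 d\lV V_t\rV$ is controlled by the trace inequality
\[
\int|u|^2\,d\lV V_t\rV\le C\lV u\rV_{W^{1,2}}\lV u\rV_{L^2}\big(\sup_r r^{-1}\lV V_t\rV(B_r)+\dots\big),
\]
together with the energy inequality of \cite{liu2024existence}, which yields $\int\int|h|^2\le C$. The critical nature means only smallness fails, not finiteness, so (a) holds. Condition (c) and the $L^1$-continuity in (b) are part of the construction in \cite{liu2024existence}; the continuity follows from the approximate BV estimates on the discrete flows.

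\textbf{Step 3: obstacle.} The main difficulty is the identification in Step 1 without any higher integrability of $u$, because the previous arguments needed $u\in L^p$ with $p$ supercritical to get uniform density ratio bounds at small scales. I would try to circumvent this with a purely measure-theoretic argument. At $\lV\nabla\chi\rV dt$-a.e.\ point, one uses Lebesgue points of $h$ and $u$ with respect to the space-time measure $\lV V_t\rV dt$, which exist by (a). These replace uniform density bounds, since the blow-up of $u$ at a Lebesgue point is a constant vector. This reduces the blow-up analysis to a Brakke flow with constant forcing.
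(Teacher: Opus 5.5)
There is a genuine gap in Step~1, and Step~3 does not repair it.

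\textbf{Why Step~1 is circular.} Your criterion uses only slice information ($\lV\nabla\chi_{E_i(t)}\rV\le\lV V_t\rV$, integrality, $L^1$-continuity) and $L^2$ bounds with respect to $\mu=d\lV V_t\rV dt$. The formula, however, is a statement about the full space-time derivative $\partial_t\chi_{E_i}$ of $E_i=\{(x,t)\mid x\in E_i(t)\}$. Your sentence ``(a) bounds $\lV\partial_t\chi_{E_i}\rV$ by $\int|h+u|\,d\lV\nabla\chi_{E_i(t)}\rV dt$'' presupposes that $\partial_t\chi_{E_i}$ is carried by $\lV\nabla\chi_{E_i(t)}\rV dt$ with density $-(h+u)\cdot\nu$. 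That is the conclusion.

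\textbf{What slice information cannot see.} By the coarea formula, $|\mathbf p(\nu_{E_i})|\,\mathcal H^{n+1}\lfloor\partial^*E_i=\lV\nabla\chi_{E_i(t)}\rV dt\le\mu$. So slices and $\mu$ only detect the part of $\partial^*E_i$ where $\mathbf p(\nu_{E_i})\neq0$. Suppose $A\subset\partial^*E_i$ has $\mathbf p(\nu_{E_i})=0$ and $\mathcal H^{n+1}(A)>0$. Then $A$ has $\mathcal H^n$-null slices for a.e.\ $t$, so $\mu(A)=0$ because $\lV V_t\rV\ll\mathcal H^n$. Yet $A$ contributes $\int_A\phi\,\mathbf q(\nu_{E_i})\,d\mathcal H^{n+1}$ to $\int_{E_i}\partial_t\phi$. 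In that case $\partial_t\chi_{E_i}$ is not absolutely continuous with respect to $\lV\nabla\chi_{E_i(t)}\rV dt$, and \emph{no} velocity can satisfy the formula.

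Continuity in time does not exclude such an $A$. Take the subgraph $\{t<f(x)\}$ of a Lipschitz $f$ whose critical set has positive measure and whose image measure is absolutely continuous. An example is $f=g_1(x_1)+g_2(x_2)$ with Cantor-type $g_i$ whose image Cantor sets add up to an interval. This set is even Lipschitz in time into $L^1$.

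\textbf{Why Step~3 does not help.} An analysis at $\lV\nabla\chi_{E_i(t)}\rV dt$-a.e.\ point is blind to $A$ by construction. Moreover, Lebesgue points of $h,u$ with respect to $\mu$ give no mass-ratio bounds on $\mu$ at small parabolic scales. So neither compactness of your blow-ups nor comparability of $\mu$ and $\mathcal H^{n+1}$ on $\partial^*E_i$ follows. That comparability is what Huisken's monotonicity supplied in Stuvard--Tonegawa, and it is exactly what is missing in the critical case.

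\textbf{The missing idea.} You need $\lV\nabla'\chi_{E_i}\rV\ll\mu$ (hypothesis (2) of Theorem~\ref{thm:L2 flow area change}), and the paper gets it from the approximating flows rather than from properties of the limit. In Lemma~\ref{lem: S<<V} it passes to the limit in the exact smooth-forcing identities $\int_{S^{(m)}(i)}\partial_t\phi=-\iint\phi\,(h^{(m)}+u^{(m)})\cdot\nu\,d\lV\nabla\chi_{E_i^{(m)}(t)}\rV dt$. It uses the uniform $L^2$ bounds of Proposition~\ref{prop: a priori} and Hutchinson's compactness (Theorem~\ref{thm: measure function cptness}) to obtain $\partial_t\chi_{S(i)}=-v_if_i\,\mu$ with $|f_i|\le1$.

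From there the argument runs as follows:
\begin{itemize}
\item $\mu\lfloor\partial^*S(i)$ and $\mathcal H^2\lfloor\partial^*S(i)$ are mutually absolutely continuous, so $T\mu=T(\partial^*S(i))$ a.e.
\item The Brakke inequality with the $L^2$ bounds makes $\{V_t\}$ an $L^2$ flow with $v=h+u^\perp$ (Proposition~\ref{prop:L2}).
\item R\"{o}ger--Sch\"{a}tzle's $(v,1)\in T\mu$ (Proposition~\ref{prop: L2 cor}) then fixes $\nu_{S(i)}$.
\item Gauss--Green and coarea yield the formula, with no blow-up of the flow at all.
\end{itemize}

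Your Step~2 is consistent with what the paper uses: Meyers--Ziemer plus the density bound give the $L^2$ control of $u$, and the $L^1$-H\"{o}lder continuity comes from the construction.
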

The equality \eqref{eq: BV_intro} was first identified in \cite{luckhaus1995implicit} as a characterization of weak MCF, called a BV flow. In the smooth case, \eqref{eq: BV_intro} holds naturally, while it is not obvious for generalized MCFs. Subsequently, Stuvard--Tonegawa \cite{StuvardTonegawa+2022} introduced the notion of generalized BV flow, which is a pair consisting of a Brakke flow and sets of finite perimeter $ E ( t ) $ satisfying \eqref{eq: BV_intro}, and they also proved the existence of generalized BV flow by refining Kim--Tonegawa's scheme \cite{kim2017mean}. Our result therefore shows that the flow constructed in \cite{liu2024existence} is not only a Brakke flow but also a generalized BV flow. Additionally, having \eqref{eq: BV_intro} has a certain conceptual advantage for the Brakke flow in that some non-uniqueness and stability issues of Brakke flow was resolved by Fischer--Hensel--Laux--Simon \cite{fischer2020local}.

To prove this result, we establish the following theorem, which provides minimal sufficient conditions for a Brakke flow (more precisely, an $ L^2 $ flow introduced in \cite{roger2008allen}) to satisfy the area change formula. It is worth emphasizing that the theorem imposes no restrictions on either the dimension or the velocity.
\begin{theorem}
	\label{thm:L2 flow area change}
	Let $ N \in \N $ with $ N \geq 2 $ and $ n \in \N $. Let $ \{ V_t \}_{ t \geq 0 } $ be an ($ n $-dimensional) $ L^2 $ flow defined in Definition \ref{def:L2 flow} with velocity $ v $ and $ \{ E_i ( t ) \}_{ t \geq 0 } $ be a family of sets of finite perimeter with $ \lV \nabla \chi_{ E_i ( t ) } \rV \leq \lV V_t \rV $ for all $ t \geq 0 $ and for each $ i = 1 , \ldots , N $. If the following three conditions hold for the pair $ ( \{ V_t \}_{ t \geq 0 } , \{ E_i ( t ) \}_{ t \geq 0 } ) $:
	\begin{enumerate}
		\item\label{itm:density assumption}
		$ \Theta^{ * n } ( x , \lV V_t \rV ) := \limsup_{ r \to + 0 } \lV V_t \rV ( B_r ( x ) ) / r^n < \infty $ for almost every $ t \geq 0 $ and for $ \lV V_t \rV $-almost every $ x \in \R^{ n + 1 } $,
		\item\label{itm:measure assumption}
		$ \lV ( \nabla , \partial_t ) \chi_{ E_i } \rV \ll \mu $ in $ \R^{ n + 1 } \times ( 0 , \infty ) $, where $ E_i := \{ ( x , t ) \in \R^{ n + 1 } \times ( 0 , \infty ) \mid x \in E_i ( t ) \} $ for each $ i = 1 , \ldots , N $, $ \lV ( \nabla , \partial_t ) \chi_{ E_i } \rV $ is the perimeter measure of $ \chi_{ E_i } $ and $ d \mu := d \lV V_t \rV d t $,
		\item\label{itm:continuity assumption}
		$ \chi_{ E_i } \in C^{ 1 / 2 }_{ loc } ( [ 0 , \infty ) ; L^1 ( \R^{ n + 1 } ) ) \cap BV_{ loc } ( \R^{ n + 1 } \times [ 0 , \infty ) ; \{ 0 , 1 \} ) $.
	\end{enumerate}
	Then $ v $ is an $ L^2_{ loc } ( \lV ( \nabla , \partial_t ) \chi_E \rV ; \R^{ n + 1 } ) $-function and we have
	\begin{equation}
		\label{eq:v area change formula}
		\begin{split}
			\int_{ E_i ( t ) } \phi \, d x \bigg\rv^{ t_2 }_{ t = t_1 } = \int_{ t_1 }^{ t_2 } \int_{ E_i ( t ) } \partial_t \phi \, d x d t
			+ \int_{ t_1 }^{ t_2 } \int_{ \partial^* E_i ( t ) } \phi ( v \cdot \nu_{ E_i ( t ) } ) \, d \mathcal{ H }^n d t
		\end{split}
	\end{equation}
	for all $ 0 \leq t_1 < t_2 < \infty $ and all test function $ \phi \in C^1_c ( \R^{ n + 1 } \times [ 0 , \infty ) ) $.
\end{theorem}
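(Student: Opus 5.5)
The plan is to apply the space-time Gauss--Green formula to $\chi_{E_i}$. The key geometric input is that, at $\lV(\nabla,\partial_t)\chi_{E_i}\rV$-a.e.\ point, the space-time measure normal $\nu=(\nu_x,\nu_t)$ of $E_i\subset\R^{n+1}\times(0,\infty)$ is orthogonal to the vector $(v,1)$. Granting this, $\nu_t=-\nu_x\cdot v$, and slicing the space-time perimeter in $t$ turns the temporal boundary term into the velocity term of \eqref{eq:v area change formula}. Throughout I write $E=E_i$ and use the $L^2$-flow structure of Definition \ref{def:L2 flow}: $v\in L^2_{loc}(\mu)$, $v(x,t)\perp \Tan_x\lV V_t\rV$, and the weak transport identity $\partial_t\lV V_t\rV+\dv(v\lV V_t\rV)\approx0$ in the sense of that definition.

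\emph{Step 1 (space-time tangent planes of $\mu$).} Using assumption \eqref{itm:density assumption}, the rectifiability of $V_t$ for a.e.\ $t$, and the $L^2$-flow identity, I would follow the blow-up argument of Mugnai--R\"oger. The goal is that for $\mu$-a.e.\ $(x,t)$ every parabolic/Euclidean tangent measure of $\mu$ is supported in the $(n+1)$-plane
\[
P_{(x,t)}=\Tan_x\lV V_t\rV\oplus\mathrm{span}\{(v(x,t),1)\}\subset\R^{n+2}.
\]
The role of the density bound is to make the slices $\lV V_t\rV$ blow up to multiplicity-bounded planes, so that this characterization of tangent measures is available. I expect this step to be the main obstacle: one must show the transport identity forces the time-direction of tangent measures to be $(v,1)$. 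The first thing I would try is to test the $L^2$-flow inequality with rescaled test functions $\eta((y-x)/r,(s-t)/r)$ and pass to the limit using Lebesgue points of $v$ with respect to $\mu$.

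\emph{Step 2 (identifying the normal).} The measure $\lV(\nabla,\partial_t)\chi_E\rV$ is $\mathcal H^{n+1}\llcorner\partial^*E$ in $\R^{n+2}$ (De Giorgi). By assumption \eqref{itm:measure assumption}, at a.e.\ point of $\partial^*E$ this measure has positive finite density with respect to $\mu$. Tangent measures of $\lV(\nabla,\partial_t)\chi_E\rV$ are therefore multiples of tangent measures of $\mu$, so the approximate tangent plane $\nu^\perp$ of $\partial^*E$ equals $P_{(x,t)}$. Consequently $\nu\perp\Tan_x\lV V_t\rV$ and $\nu\cdot(v,1)=0$, i.e.\ $\nu_t=-\nu_x\cdot v$. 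This forces $\nu_x\neq0$ a.e., since $\nu_x=0$ would give $\nu_t=0$ and contradict $|\nu|=1$.

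\emph{Step 3 (slicing).} Disintegrating the BV function $\chi_E$ in $t$ gives, for a.e.\ $t$, that the slice $E(t)$ satisfies $|\nu_x|\,d\lV(\nabla,\partial_t)\chi_E\rV=d\lV\nabla\chi_{E(t)}\rV\,dt$ and $\nu_x/|\nu_x|=\nu_{E(t)}$. This is the standard slicing of the spatial part of a BV derivative, and assumption \eqref{itm:continuity assumption} identifies the slices with the given $E(t)$. Then
\[
-\nu_t\,d\lV(\nabla,\partial_t)\chi_E\rV=(v\cdot\nu_{E(t)})\,d\lV\nabla\chi_{E(t)}\rV\,dt .
\]
The integrability claim for $v$ follows because $\lV\nabla\chi_{E(t)}\rV\le\lV V_t\rV$ and $v\in L^2_{loc}(\mu)$; I read the $L^2$ statement with respect to $\lV\nabla\chi_{E(t)}\rV\,dt$.

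\emph{Step 4 (Gauss--Green).} Apply the divergence theorem to the field $(0,\phi)$ on $E\cap\{t_1<t<t_2\}$, first for a.e.\ $t_1,t_2$. This gives
\[
\int_{E(t)}\phi\,dx\Big|_{t_1}^{t_2}=\int_{t_1}^{t_2}\int_{E(t)}\partial_t\phi\,dx\,dt-\int\!\!\int_{t_1<t<t_2}\phi\,\nu_t\,d\lV(\nabla,\partial_t)\chi_E\rV,
\]
and Step 3 converts the last term into the one in \eqref{eq:v area change formula}. Finally, the $C^{1/2}([0,\infty);L^1)$ continuity in \eqref{itm:continuity assumption} extends the identity to all $0\le t_1<t_2$, including $t_1=0$: both the left side and the truncated integrals are continuous in $t_1,t_2$.
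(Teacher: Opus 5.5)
\textbf{The gap is Step 1.} You need that at $\mu$-a.e.\ $(x,t)$ every tangent measure of $\mu=\lV V_t\rV\,dt$ is carried by the $(n+1)$-plane $P_{(x,t)}$. Nothing in the hypotheses gives this.

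Assumption (1) is a property of each slice separately; it already holds automatically for integral varifolds. A space-time blow-up of $\mu$ at $(x,t)$, however, averages the slices $\lV V_s\rV$ over $|s-t|<r$, and assumption (1) says nothing about those. Controlling them is exactly the space-time density bound that Huisken's monotonicity gives for Brakke flows, and which the theorem is designed to avoid (see the Remark after Lemma \ref{lem: V<<S}).

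The rescaling argument you propose can be run at a point where two things hold: $v$ has a $\mu$-Lebesgue point, and the Radon measure $\lambda$ represented by the functional in \eqref{eq:L2 flow} has bounded density with respect to $\mu$. There it yields only that every tangent measure $\tau$ satisfies $\partial_t\tau+v(x,t)\cdot\nabla\tau=0$, i.e.\ $\tau$ is invariant under translation along $(v,1)$. It neither confines $\tau$ to an $(n+1)$-plane nor even gives $(n+1)$-dimensional scaling. As formulated, Step 1 is a rectifiability theorem for $\mu$ that is not available here.

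\textbf{The repair, which is what the paper does, is never to ask about $\mu$ away from $\partial^*E$.} In effect, run your Step 2 before Step 1.

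\emph{Tangent plane on $\partial^*E$.} By assumption (2), write $\lV(\nabla,\partial_t)\chi_E\rV=g\,\mu$; then $0<g<\infty$ at $\lV(\nabla,\partial_t)\chi_E\rV$-a.e.\ point. Your own tangent-measure comparison, together with the Lebesgue-point property of $g$, then shows the following. At $\mathcal{H}^{n+1}$-a.e.\ $(x,t)\in\partial^*E$, the measure $\mu$ has a genuine approximate tangent plane, namely $\nu^\perp$ with multiplicity $1/g(x,t)$. The paper obtains this in Proposition \ref{prop: basic}(1) from assumption (2) together with the reverse absolute continuity of Lemma \ref{lem: V<<S}, which is where assumption (1) enters.

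\emph{Blow-up only at those points.} This is Proposition \ref{prop: L2 cor} (Mugnai--R\"oger): $(v,1)\in T_{(x,t)}\mu$ wherever $T_{(x,t)}\mu$ exists. By assumption (2), $\mu$-null sets are $\mathcal{H}^{n+1}\lfloor_{\partial^*E}$-null, so you get $\nu\cdot(v,1)=0$ at $\mathcal{H}^{n+1}$-a.e.\ point of $\partial^*E$.

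\emph{Rest of the argument.} From here your Steps 3--4 go through. You do not even need $\nu\perp T_x\lV V_t\rV\times\{0\}$; the paper derives it by BV slicing in Proposition \ref{prop: basic}(3)--(5) in order to compute $\nu_E$ and the coarea factor explicitly. The relation $\nu_t=-v\cdot\nu_x$ together with the slicing identity $\nu_x\,\lV(\nabla,\partial_t)\chi_E\rV=\nu_{E(t)}\lV\nabla\chi_{E(t)}\rV\,dt$ already produces the velocity term. Integrability is handled by $\lv v\cdot\nu_x\rv=\lv\nu_t\rv\le 1$ on $\partial^*E$.

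\emph{Integrability conclusion.} Your reading of it, with respect to $\lV\nabla\chi_{E(t)}\rV\,dt$, is the right one. Since $d\lV(\nabla,\partial_t)\chi_E\rV=\sqrt{1+(v\cdot\nu_{E(t)})^2}\,d\lV\nabla\chi_{E(t)}\rV\,dt$, square integrability with respect to the space-time perimeter is a cubic condition on $v$. It fails, for instance, for a disc shrinking with radius $(T-t)^\alpha$, $1/3<\alpha<1/2$, which satisfies all the hypotheses.
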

This theorem was implicitly established in \cite{StuvardTonegawa+2022,tashiro2024time} under a stronger assumption on the space-time density than that imposed in Theorem \ref{thm:L2 flow area change} (1). By refining their arguments, we weaken the density assumption and thereby make the theorem applicable to flows such as the one constructed in \cite{liu2024existence}. We also note that a compactness theorem for generalized BV flows can be derived by combining the present theorem with the argument developed in \cite{liu2024existence}. Although compactness is not the main focus of this note, we discuss this result in Section \ref{sec:compactness}. Roughly speaking, the compactness for the generalized BV flow is as follows:
\begin{theorem}
	Let $ ( \{ V_t^i \}_{ t \geq 0 } , \{ E_t^i \}_{ t \geq 0 } ) $ be a family of generalized BV flows satisfying
	\[
		\sup_i \sup_{ t \in [ 0 , T ] } \lV V_t^i \rV ( K ) \leq C
	\]
	for all $ T > 0 $ and all compact set $ K \subset \R^n $. Then, passing to a subsequence if necessary, there exists a generalized BV flow $ ( \{ V_t \}_{ t \geq 0 } , \{ E_t \}_{ t \geq 0 } ) $ such that $ ( \{ V_t^i \}_{ t \geq 0 } , \{ E_t^i \}_{ t \geq 0 } ) \to ( \{ V_t \}_{ t \geq 0 } , \{ E_t \}_{ t \geq 0 } ) $ in an appropriate sense.
\end{theorem}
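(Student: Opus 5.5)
The plan is to pass to the limit separately in the varifolds and in the sets, and then to recover the area change formula for the limit pair from Theorem \ref{thm:L2 flow area change}, not by passing to the limit in \eqref{eq: BV_intro} directly. Throughout, $\mu^i:=\lV V^i_t\rV\,dt$, $h^i$ is the generalized mean curvature of $V^i_t$ and $u^i$ the forcing, with bounds uniform in $i$.

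\emph{Step 1: the varifolds.} The uniform mass bound and Brakke's inequality with the (critical) forcing give, as in \cite{liu2024existence}, a uniform local bound on $\int_0^T\int |h^i|^2\,d\lV V^i_t\rV\,dt$. Brakke's compactness theorem (in the form with forcing used in \cite{liu2024existence}) then yields a subsequence and a Brakke flow $\{V_t\}$ with $\lV V^i_t\rV\to\lV V_t\rV$ for every $t$. By Fatou and lower semicontinuity of the first variation, for a.e.\ $t$ the limit $V_t$ is integral with $h(\cdot,V_t)\in L^2$, and $\mu^i\to\mu:=\lV V_t\rV\,dt$. I would also show that $\{V_t\}$ is an $L^2$ flow in the sense of Definition \ref{def:L2 flow} with velocity $v=h+u^\perp$. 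Integrality gives finite density $\lV V_t\rV$-a.e.\ for a.e.\ $t$, which is assumption (1) of Theorem \ref{thm:L2 flow area change}.

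\emph{Step 2: the sets.} Use \eqref{eq: BV_intro} for $(V^i,E^i)$ with $\phi$ independent of $t$. With Cauchy--Schwarz it gives
\[
\Bigl|\int_{E^i(t_2)}\phi-\int_{E^i(t_1)}\phi\Bigr|\le \lV\phi\rV_\infty\,(t_2-t_1)^{1/2}\Bigl(\int_{t_1}^{t_2}\!\!\int|h^i+u^i|^2\,d\lV V^i_t\rV dt\Bigr)^{1/2}C^{1/2}.
\]
This is a uniform $C^{1/2}$ modulus in the weak sense; combined with the $BV$ bound $\lV\nabla\chi_{E^i(t)}\rV\le\lV V^i_t\rV$, it upgrades to $L^1_{loc}$ equicontinuity. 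A diagonal argument over rational times then gives $E^i(t)\to E(t)$ in $L^1_{loc}$ for all $t$, with $\chi_E\in C^{1/2}_{loc}([0,\infty);L^1)$. Lower semicontinuity gives $\lV\nabla\chi_{E(t)}\rV\le\lV V_t\rV$. The same formula shows $\lV\partial_t\chi_{E^i}\rV\le|h^i+u^i|\,\mu^i$ as measures, so $\chi_E\in BV_{loc}$ in space-time. This gives assumption (3).

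\emph{Step 3: absolute continuity, assumption (2).} Let $\nu$ be a weak limit of $\lV(\nabla,\partial_t)\chi_{E^i}\rV$. For a compact set $K$ in space-time,
\[
\nu(K)\le\limsup_i\Bigl(\mu^i(K)+\mu^i(K)^{1/2}\lV h^i+u^i\rV_{L^2(\mu^i)}\Bigr)\le \mu(K)+C\mu(K)^{1/2}.
\]
Hence $\nu\ll\mu$, and $\lV(\nabla,\partial_t)\chi_E\rV\le\nu$ by lower semicontinuity. Theorem \ref{thm:L2 flow area change} then gives the area change formula for $(\{V_t\},\{E(t)\})$ with velocity $v$. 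Since $\nu_{E(t)}$ is normal to $V_t$ a.e.\ on $\partial^*E(t)$, we have $v\cdot\nu_{E(t)}=(h+u)\cdot\nu_{E(t)}$, so the limit is a generalized BV flow.

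\emph{Main obstacle.} The delicate point is the last part of Step 1: showing the limit is an $L^2$ flow whose velocity is exactly $h+u^\perp$. With critical $u$, the term $\int\phi\, u^i\cdot h^i\,d\lV V^i_t\rV$ does not obviously pass to the limit, because $u^i$ is not small at small parabolic scales. Here I would import the estimates of \cite{liu2024existence}: the $W^{1,2}$ control of $u$ and a trace-type inequality $\int|u|^2\,d\lV V_t\rV\lesssim\lV u\rV_{W^{1,2}}^2\cdot(\text{mass})$ on the varifolds. These give uniform integrability of $|u^i|^2\mu^i$, so the forcing term converges and the velocity of the limit is identified.
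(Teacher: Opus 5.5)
Once the forcing is removed, your argument is correct and has the same structure as the paper's; the forced version you set out to prove has a real gap in Step 1. The generalized BV flows in this statement are those of Definition \ref{def:genaralized BV}: unforced Brakke flows whose area change formula involves only $h$. So the forcing $u^i$ you carry along is not part of the problem. With $u^i\equiv 0$ your proof follows the paper's Section \ref{sec:compactness}: Brakke compactness for $\{V^i_t\}$, compactness of the space-time sets $E^i$, then verification of hypotheses (1)--(3) and of the $L^2$-flow property so that Theorem \ref{thm:L2 flow area change} applies.

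Inside this scheme you make three choices that differ from the paper's, and all are sound.
\begin{itemize}
\item You get hypothesis (1) from integrality, which is right. For $V_t=\mathbf{v}(M,\theta)$ one has $\Theta^{n}(x,\lV V_t\rV)=\theta(x)<\infty$ for $\mathcal{H}^n$-a.e.\ $x\in M$. So (1) holds for a.e.\ $t$ without the appeal to Huisken's monotonicity that the paper makes.
\item Your Arzel\`a--Ascoli argument combines a uniform $C^{1/2}$ modulus in $L^1_{loc}$ with compactness at rational times. It gives $E^i_t\to E_t$ for \emph{every} $t$ along a single subsequence, and the H\"older continuity of the limit, in one step. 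This is cleaner than the $t$-dependent subsequences and the redefinition of $E$ in Propositions \ref{prop:E convergence} and \ref{prop:Holder continuity of E}.
\item Your bound $\nu(K)\le\mu(K)+C\mu(K)^{1/2}$ on compact sets, followed by lower semicontinuity, is a more elementary substitute for the measure-function pair argument of Lemma \ref{lem: S<<V}.
\end{itemize}

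Your `main obstacle' paragraph does not hold up. For the statement as posed there is nothing to pass to the limit there. The limit is itself a Brakke flow with $h\in L^2_{loc}(\mu)$, and the monotone-functional argument of Proposition \ref{prop:L2}, applied to its own Brakke inequality, gives the $L^2$-flow property with $v=h$.

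For forcings $u^i$ that vary with $i$ and are only bounded in $L^\infty L^2\cap L^2W^{1,2}$, your fix fails. Here $u^i$ converges only weakly, so $\int\phi\,u^i\cdot h^i\,d\mu^i$ is a product of two weakly converging objects. Identifying its limit needs \emph{strong} convergence of the measure-function pairs $(\mu^i,u^i)$, and uniform integrability of $\lv u^i\rv^2\mu^i$ does not give that. In \cite{liu2024existence} strong convergence is available only because the approximating fields are mollifications of one fixed $u$.

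The trace bound you quote is also misstated. Meyers--Ziemer controls $\int\lv u\rv^2\,d\lV V_t\rV$ by the density ratio $\sup\lV V_t\rV(B_r(x))/r$ times $\int\lv\nabla\lv u\rv^2\rv\,dx$. In the planar setting, \eqref{eq: density_esti} bounds that ratio by $(\text{mass})^{1/2}(\int\lv h\rv^2\,d\lV V_t\rV)^{1/2}$, not by the mass alone.

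So, read as a theorem about flows with varying forcing, Step 1 has a genuine gap. For the statement actually posed, drop $u^i$ and your proof goes through.
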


As a corollary of Theorem \ref{thm:main result}, one can obtain the following on the estimate of the extinction time for the flow.

\begin{theorem}
	\label{thm: main2}
	Under the same setting as in Theorem \ref{thm:main result}, we let the extinction time of $ \{ V_t \}_{ t \geq 0 } $
	\[
		T^* := \sup \{ T \in [ 0 , \infty ] \mid \lV V_T \rV ( \R^2 ) \neq 0 \}.
	\]
	We may assume that $ \lV V_0 \rV = \mathcal{ H }^1 \lfloor_{ \R^2 \setminus \bigcup_{ i = 1 }^N E_i ( 0 ) } $ and $ N $ is the index of the only grain with infinite volume without loss of generality. Define the bounded open set $ E ( t ) := \bigcup_{ i = 1 }^{ N - 1 } E_i ( t ) $. Then $ T^* $ satisfies
	\begin{equation}
		\label{eq: esti extinction}
		\frac{ 2 \lv E ( 0 ) \rv^2 }{ \lV V_0 \rV ( \R^2 )^2 ( 1 + C ( u ) ) } \leq T^*
	\end{equation}
	for some $ 0 \leq C ( u ) < \infty $.
\end{theorem}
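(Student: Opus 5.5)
We apply the area change formula \eqref{eq: BV_intro} of Theorem \ref{thm:main result} with test functions $\phi\equiv 1$ on a large ball containing the bounded grains (cut off far away; the flow stays in a bounded region for bounded time). Set $E(t)=\bigcup_{i=1}^{N-1}E_i(t)$. Summing \eqref{eq: BV_intro} over $i=1,\dots,N-1$ gives, for $0\le t<T^*$,
\[
|E(t)|-|E(0)| = \sum_{i=1}^{N-1}\int_0^t\int (h+u)\cdot\nu_{E_i(s)}\,d\lV\nabla\chi_{E_i(s)}\rV\,ds .
\]
Since the $E_i(s)$ are disjoint and $\lV\nabla\chi_{E_i(s)}\rV\le\lV V_s\rV$, a point of the reduced boundary belongs to at most two grains, so the total boundary measure is at most $2\lV V_s\rV$. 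For $t\ge T^*$ the grains $E_i(t)$, $i\le N-1$, must have vanishing perimeter. Being bounded, they must then be null, so $|E(T^*)|=0$; the continuity in $t$ from hypothesis (3) lets us pass to the limit $t\uparrow T^*$. Hence
\[
|E(0)| \le \int_0^{T^*}\!\!\int |h+u|\,d(2\lV V_s\rV)\,ds \le 2\int_0^{T^*}\Big(\int|h+u|^2d\lV V_s\rV\Big)^{1/2}\lV V_s\rV(\R^2)^{1/2}ds .
\]
One might expect only the factor $1$ rather than $2$, because each interface between two bounded grains contributes with opposite normals and cancels. I will keep track of this cancellation: interior interfaces cancel pairwise and only $\partial E\cap\partial E_N$ survives, which carries density at most $\lV V_s\rV$. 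This yields the sharper bound with factor $1$, consistent with the numerator $2|E(0)|^2$.

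Next I use the energy inequality of the Brakke flow with forcing from \cite{liu2024existence}, in the form
\[
\lV V_t\rV(\R^2)+\int_0^t\int |h|^2\,d\lV V_s\rV\,ds \le \lV V_0\rV(\R^2)+\int_0^t\int |u^\perp|^2 d\lV V_s\rV ds
\]
(or the analogous version with $\tfrac12$ weights, obtained from $-|h|^2+h\cdot u\le -\tfrac12|h|^2+\tfrac12|u|^2$). The critical assumption \eqref{eq: critical}, together with the trace-type estimate $\int_0^T\int|u|^2\,d\lV V_s\rV\,ds\le C(u)\,\lV V_0\rV(\R^2)$ used in \cite{liu2024existence} (a Sobolev/Meyers–Ziemer-type bound valid thanks to the mass bound and $u\in L^2W^{1,2}$), controls the forcing contribution. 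This gives
\[
\int_0^{T^*}\!\!\int|h+u|^2\,d\lV V_s\rV\,ds \le 2\big(1+C(u)\big)\lV V_0\rV(\R^2),\qquad \lV V_s\rV(\R^2)\le \big(1+C(u)\big)\lV V_0\rV(\R^2).
\]

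Finally, applying Cauchy–Schwarz in time,
\[
|E(0)| \le \Big(\int_0^{T^*}\!\!\int|h+u|^2\Big)^{1/2}\Big(\int_0^{T^*}\lV V_s\rV(\R^2)\,ds\Big)^{1/2}\lesssim \big(1+C(u)\big)^{1/2}\lV V_0\rV(\R^2)\,(T^*)^{1/2}.
\]
Squaring yields \eqref{eq: esti extinction} after absorbing constants into $C(u)$. I expect the main obstacle to be obtaining the exact constant $2$ while keeping $C(u)$ as a single factor. This requires choosing the Young weights in the energy inequality carefully and bounding $\int\!\int|u|^2\,d\mu$ uniformly in terms of $\lV V_0\rV(\R^2)$ and norms of $u$ up to time $T^*$. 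Justifying $\phi\equiv1$ via cutoffs and uniform boundedness of the support is routine.
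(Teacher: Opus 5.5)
Your skeleton matches the paper's: $\phi\equiv1$ in \eqref{eq: BV_intro}, cancellation of interior interfaces so that only $\lV\nabla\chi_{E(t)}\rV\le\lV V_t\rV$ survives, $\lv E(T^*)\rv=0$ by continuity, and the Liu--Tonegawa a priori bounds. The gap is in the final step. You bound $\int_0^{T^*}\lV V_s\rV(\R^2)\,ds$ and $\int_0^{T^*}\!\int\lv h+u\rv^2\,d\mu$ separately and then apply Cauchy--Schwarz in time. With your own constants this gives only $T^*\ge\lv E(0)\rv^2/\bigl(2(1+C)^2\lV V_0\rV(\R^2)^2\bigr)$. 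Even for $u\equiv0$, where no Young inequality enters, it gives only $T^*\ge\lv E(0)\rv^2/\lV V_0\rV(\R^2)^2$. So the factor $2$ in the numerator does not come from the interface cancellation, which merely avoids losing a further factor. No choice of Young weights recovers it either. The loss comes from using $a(s)\le a(0)$ and $\int_0^{T^*}\!\int\lv h\rv^2\,d\mu\le a(0)$ as if both were sharp at once, whereas $a(t):=\lV V_t\rV(\R^2)$ decreases exactly at the rate $\int\lv h\rv^2\,d\lV V_t\rV$. Absorbing the discrepancy into $C(u)$ meets the letter of ``some $C(u)$''. But it forces $C(u)\ge3$ even for $u\equiv0$, so the estimate no longer reduces to the sharp bound of Giga--Yama-uchi and Stuvard--Tonegawa, which is its point. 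The missing idea is to couple mass and dissipation pointwise in time before integrating, as the paper does. By \eqref{eq: Brakke} with $\phi\equiv1$, $\int\lv h\rv^2\,d\lV V_t\rV\le-a'_+(t)+\int\lv h\cdot u\rv\,d\lV V_t\rV$, hence
\[
a(t)\int\lv h\rv^2\,d\lV V_t\rV\le-\tfrac12\,(a^2)'_+(t)+a(t)\int\lv h\cdot u\rv\,d\lV V_t\rV .
\]
Cauchy--Schwarz in time then bounds $\int_0^T\bigl(a(t)\int\lv h\rv^2\,d\lV V_t\rV\bigr)^{1/2}dt$ by $\sqrt{T/2}\,\bigl(a(0)^2+2\sup_{[0,T]}a\int_0^T\!\int\lv h\cdot u\rv\,d\mu\bigr)^{1/2}$. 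The $\tfrac12$ coming from $(a^2)'=2aa'$ is exactly the factor $2$.

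Three further points. First, you are right to keep the flux of $u$. The paper's displayed chain bounds $-v'(t)$ by the $h$-part alone, although $\int u\cdot\nu_{E(t)}\,d\lV\nabla\chi_{E(t)}\rV$ (formally $\int_{E(t)}\dv u\,dx$) has no sign. To combine this with the coupling, use $(h+u)\cdot\nu_{E(t)}=(h+u^\perp)\cdot\nu_{E(t)}$ $\mathcal{H}^1$-a.e. on $\partial^*E(t)$ together with
\[
\int\lv h+u^\perp\rv^2\,d\lV V_t\rV\le-a'_+(t)+\int\bigl(\lv h\cdot u\rv+\lv u\rv^2\bigr)\,d\lV V_t\rV .
\]
The same argument then only adds $\int_0^T\!\int\lv u\rv^2\,d\mu$, controlled by \eqref{eq: u a priori}, to $C(u)$, which still vanishes for $u\equiv0$. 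Second, your constant involves norms of $u$ up to the unknown time $T^*$. Reduce first, as the paper does: set $T_0:=2\lv E(0)\rv^2/\lV V_0\rV(\R^2)^2$. If $T^*>T_0$ there is nothing to prove, since $1+C(u)\ge1$. Otherwise take every a priori bound on $[0,T_0]$, so that $C(u)=C(u,T_0)$ depends only on $u$ and the initial data. Third, your claim that the flow stays in a bounded region is not justified, but it is also not needed. The global bounds on the mass and on $h$, $u$ in $L^2(\mu)$ let you pass from cutoffs $\phi_R$ to $\phi\equiv1$.
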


In the case of $ u \equiv 0 $, the similar estimate on extinction time \eqref{eq: esti extinction} was obtained by Giga--Yama-uchi \cite{giga1993lower} for level set solutions and Salvatore--Tonegawa \cite{StuvardTonegawa+2022} for multi-phase generalized BV flows, and their estimates are sharp. However, as far as the author knows, such an estimate had never been proved before in the context of MCFs with (critical) forcing term. The estimate \eqref{eq: esti extinction} ensures that extinction does not occur until the time provided by this theorem even if one chooses $ u $ that tends to contract the domain $ E ( t ) $. Note that the extinction time of a MCF with forcing term can be infinite.

In the subcritical case, namely, when the influence of a forcing term is negligible at small parabolic scales, there are several previous studies using the Allen--Cahn equation and we mention \cite{liu2010existence,takasao2016existence}. Moreover, the author showed the flows constructed in \cite{liu2010existence,takasao2016existence} satisfy the same result as \eqref{eq: BV_intro} in \cite{tashiro2024time}.

The proof of \eqref{eq: BV_intro} mainly follows \cite{StuvardTonegawa+2022}. The difficulty to prove \eqref{eq: BV_intro} is a lack of Huisken's monotonicity formula for the MCF with critical forcing term $ u $. In \cite{StuvardTonegawa+2022}, the monotonicity formula is essentially used to obtain the bound of the space-time density of Brakke flows,
which implies that the absolute continuity of $ d \lV V_t \rV d t $ with respect to the Hausdorff measure $ \mathcal{ H }^2 $ in $ \R^2 \times ( 0 , \infty ) $. Due to the criticality, we do not know this absolute continuity. To overcome this, we use the measure theoretic properties between $ \lV \nabla \chi_{ E ( t ) } \rV $ and $ \lV V_t \rV $ and a priori estimate on the density of $ \lV V_t \rV $ by \cite{liu2024existence} to obtain $ ( d \lV V_t \rV d t ) \lfloor_{ \partial S } \ll d \mathcal{ H }^2 \lfloor_{ \partial S } $ instead of $ d \lV V_t \rV d t \ll d \mathcal{ H }^2 $, where $ S := \{ ( x , t ) \mid x \in E ( t ) \} $. We then show that this weak property gives the area change formula, summarized in Theorem \ref{thm:L2 flow area change}.

The paper is organized as follows: In Section \ref{sec: pre}, we set our notation and review the results of \cite{liu2024existence}. In Section \ref{sec:proof of area change}, we show the proof of Theorem \ref{thm:L2 flow area change} under a weak assumption. In Section \ref{sec: proof of main}, we show the absolute continuity between a Brakke flow $ d \lV V_t \rV d t $ and $ \mathcal{ H }^2 \lfloor_{ \partial S } $, and we then prove that the flow by \cite{liu2024existence} satisfies the area change formula \eqref{eq: BV_intro}. In Section \ref{sec:compactness}, we briefly give a proof of the compactness theorem for generalized BV flows. In Section \ref{sec: proof of main2}, we estimate the extinction time.

\subsection*{Acknowledgments}
The author would like to thank Prof. Yoshihiro Tonegawa for discussing the problem of this paper. The author also acknowledges support from Japan Society for Promotion of Science (JSPS) through the Grand-in-Aid for JSPS Fellows, Grant number 25KJ1245, as well as support from the Italian Ministry of University and Research (MUR) through the FIS project \textit{SiGmA: ``Singularities in Geometric Analysis: Minimal Surfaces and Mean Curvature Flows''}, project code FIS-2023-02962 (CUP G53C25000120001). 

\section{Preliminaries}
\label{sec: pre}

\subsection{Basic Notation}
\label{subsec: notation}
We shall use the same notation for the most part adopted in \cite{liu2024existence}. For any two Radon measures $ \mu $ and $ \nu $, $ \mu \ll \nu $ denotes that $ \mu $ is absolutely continuous with respect to $ \nu $. A general $ n $-varifold is a Radon measure on $ \R^{ n + 1 } \times \mathbf{ G } ( n + 1 , n ) $, where $ \mathbf{ G } ( n + 1 , n ) $ is the space of $ n $-dimensional subspace of $ \R^{ n + 1 } $ (see \cite{allard1972first,simon1983lectures} for the details).  The set of all general $ n $-varifolds is denoted by $ \V_n ( \R^{ n + 1 } ) $. For $ V \in \V_n ( \R^{ n + 1 } ) $, we let $ \lV V \rV $ the weight measure of $ V $, namely,
\[
\lV V \rV ( \phi ) := \int_{ \R^{ n + 1 } \times \mathbf{ G } ( n + 1 , n ) } \phi \, d V ( x , S ) \text{ for } \phi \in C^1_c ( \R^{ n + 1 } ).
\]
We call $ V \in \V_n ( \R^{ n + 1 } ) $ rectifiable if there exist an $ \mathcal{ H }^n $-measurable countably $ n $-rectifiable set $ M \subset \R^{ n + 1 } $ (see \cite{simon1983lectures} for the definition) and a locally $ \mathcal{ H }^n $-integrable function $ \theta $ defined on $ M $ such that
\[
	V ( \phi ) = \int_M \phi ( x , \Tan_x M ) \theta ( x ) \, d \mathcal{ H }^n ( x ) \text{ for } \phi \in C_c ( \R^{ n + 1 } \times \mathbf{ G } ( n + 1 , n ) ).
\]
Here $ \Tan_x M $ is the approximate tangent space of $ M $ at $ x $ which exists $ \mathcal{ H }^n $-almost everywhere on $ M $. The approximate tangent space $ \Tan_x M $ is denoted in this case by $ \Tan_x \lV V \rV $ without fear of confusion. If $ \theta \in \N $ $ \mathcal{ H }^n $-almost everywhere on $ M $, we say $ V $ is integral. The set of all integral $ n $-varifolds is denoted by $ \IV_n ( \R^{ n + 1 } ) $. If $ \theta = 1 $ $ \mathcal{ H }^n $-almost everywhere on $ M $, we say $ V $ is a unit density $ n $-varifold.

For $ V \in \V_n ( \R^{ n + 1 } ) $, let $ \delta V $ be the first variation of $ V $, namely,
\[
	\delta V ( g ) := \int_{ \R^{ n + 1 } \times \mathbf{ G } ( n + 1 , n ) } \nabla g ( x ) \cdot S \, d V ( x , S ) \text{ for } g \in C^1_c ( \R^{ n + 1 } ; \R^{ n + 1 } ),
\]
where $ \nabla g \cdot S = \sum_{ i , j = 1 }^{ n + 1 } \partial_{ x_j } g^i S_{ i j } $, which is the divergence of $ g $ on $ S $. If
\[
	\sup_{ g \in C^1_c ( \R^{ n + 1 } ; \R^{ n + 1 } ), \lv g \rv \leq 1 } \lv \delta V ( g ) \rv < \infty,
\]
we say the first variation is bounded, and in this case, $ \lV \delta V \rV $ denotes the total variation measure of $ \delta V $. If $ \lV \delta V \rV $ is absolutely continuous with respect to $ \lV V \rV $, we have a $ \lV V \rV $-measurable vector field $ h ( \cdot , V ) $ such that
\[
	\delta V ( g ) = - \int_{ \R^{ n + 1 } } g ( x ) \cdot h ( x , V ) \, d \lV V \rV ( x ) \text{ for } g \in C^1_c ( \R^{ n + 1 } ; \R^{ n + 1 } ).
\]
The vector field $ h ( \cdot , V ) $ is called the generalized mean curvature vector of $ V $. For any $ V \in \IV_n ( \R^{ n + 1 } ) $ with bounded first variation, by Brakke's perpendicularity theorem \cite[Chapter 5]{brakke1978motion}, one has
\[
	S^{ \perp } ( h ( x , V ) ) = h ( x , V ) \text{ for } V \text{-} a.e.\, ( x , S ) \in \R^{ n + 1 } \times \mathbf{ G } ( n + 1 , n ),
\]
that is, $ h ( x , V ) $ is perpendicular to the approximate tangent space for $ \lV V \rV $-almost every $ x \in \R^{ n + 1 } $. For $ V \in \IV_n ( \R^{ n + 1 } ) $, and $ \lV V \rV $-integrable function $ u $, we use the notation
\[
	u^{ \perp } ( x ) := ( \Tan_x \lV V \rV )^{ \perp } ( u ( x ) ),
\]
which is well-defined for $ \lV V \rV $-almost every $ x \in \R^{ n + 1 } $.

For a set of finite perimeter $ E \subset \R^{ n + 1 } $, let $ \partial^* E $ be the reduced boundary of $ E $ and $ \nu_E $ be the outer unit normal vector of $ \partial^* E $. Let $ \nabla \chi_E $ be the distributional derivative of characteristic function of $ E $, and let $ \lV \nabla \chi_E \rV $ be the total variation of $ \nabla \chi_E $. We also let $ \nabla' = ( \nabla , \partial_t ) $, which is the full divergence in $ \R^{ n + 1 } \times \R $. By the well-known structure theorem for set of finite perimeter, one has $ \lV \nabla \chi_E \rV = \mathcal{ H }^n \lfloor_{ \partial^* E } $ (see \cite{maggi2012sets} for the details on the set of finite perimeter, for example). We also write $ \lv E \rv $ as the Lebesgue measure of $ E $.

\subsection{Review on Liu--Tonegawa's Result}
\label{subsec: review}
In this subsection, we briefly recall the results what we will use from \cite{liu2024existence} for the convenience of the readers (see also \cite{kim2017mean,kim2020existence,StuvardTonegawa+2022}). We set an initial datum satisfying the following assumption.

\begin{assumption}
\label{assum: initial}
	As an initial datum, we consider the following:
	\begin{enumerate}
		\item $ N \in \N $ with $ N \geq 2 $.
		\item $ E_{ 0 , 1 } , \ldots , E_{ 0 , N } $ are non-empty open sets in $ \R^2 $ which are mutually disjoint.
		\item $ \Gamma_0 := \R^2 \setminus \bigcup_{ i = 1 }^N E_{ 0 , i } $ is countably $ 1 $-rectifiable and $ \mathcal{ H }^1 ( \Gamma_0 ) < \infty $.
		\item $ \mathcal{ H }^1 ( \bigcup_{ i = 1 }^N ( \partial E_{ 0 , i } \setminus \partial^* E_{ 0 , i } ) ) = 0 $. 
	\end{enumerate}
\end{assumption}

The above $ N $ corresponds to the number of phases (or grains) $ E_{ 0 , 1 } , \ldots , E_{ 0 , N } $ and they divide the whole space $ \R^2 $. Since the MCF originally arises from a mathematical model of the metal annealing, considering the multi phase case is natural. Moreover, since $ \Gamma_0 $ do not have any interior point, we have $ \Gamma_0 = \bigcup_{ i = 1 }^N \partial E_{ 0 , i } $ and the initial datum $ \Gamma_0 $ is given as the topological boundary of partition.

For any smooth forcing term $ u \in C^1_c ( \R^2 \times [ 0 , \infty ) ; \R^2 ) $, Liu--Tonegawa established the existence of a MCF with $ u $ by modifying the scheme of \cite{kim2017mean,kim2020existence,StuvardTonegawa+2022} and we quote the results in \cite[Section 4 and Appendix A]{liu2024existence}. Note that they proved this in the higher dimension case.

\begin{theorem}
\label{thm: existence for smooth u}
	Let $ \Gamma_0 $ and $ \{ E_{ 0 , i } \}_{ i = 1 }^N $ be as in Assumption \ref{assum: initial} and let $ u \in C^1_c ( \R^2 \times [ 0 , \infty ) ; \R^2 ) $. Then there exist a family $ \{ V_t \}_{ t \geq 0 } \subset \IV_1 ( \R^2 ) $ and a family of open sets $ \{ E_i ( t ) \}_{ t \geq 0 } $ for each $ i = 1 , \ldots , N $ with the following properties:
	\begin{enumerate}
		\item $ \lim_{ t \to + 0 } \lV V_t \rV = \lV V_0 \rV = \mathcal{ H }^1 \lfloor_{ \Gamma_0 } $.
		\item For almost every $ t \geq 0 $, $ \delta V_t $ is bounded and absolutely continuous with respect to $ \lV V_t \rV $ (thus the generalized mean curvature $ h ( \cdot , V_t ) $ exists).
		\item For all $ T > 0 $, $ \{ V_t \}_{ t \geq 0 } $ and $ h ( \cdot , V_t ) $ satisfy
		\begin{equation*}
			\sup_{ 0 \leq t \leq T } \lV V_t \rV ( \R^2 ) < \infty \text{ and } \int_0^T \int_{ \R^2 } \lv h ( \cdot , t ) \rv^2 \, d \lV V_t \rV d t < \infty.
		\end{equation*}
		\item For all $ 0 \leq t_1 < t_2 < \infty $ and $ \phi \in C^1_c ( \R^2 \times [ 0 , \infty ) ) $ with $ \phi \geq 0 $, we have
		\begin{equation}
			\label{eq: Brakke}
				\lV V_t \rV ( \phi ( \cdot , t ) ) \Big|_{ t = t_1 }^{ t_2 } \leq \int_{ t_1 }^{ t_2 } \int_{ \R^2 } ( \nabla \phi - \phi h ) \cdot ( h + u^{ \perp } ) + \partial_t \phi \, d \lV V_t \rV d t.
		\end{equation}
		\item For $ i = 1 , \ldots , N $, $ E_i ( 0 ) = E_{ 0 , i } $ and $ E_1 ( t ) , \ldots , E_N ( t ) $ are mutually disjoint open sets for all $ t \geq 0 $.
		\item For all $ t \geq 0 $ and $ i = 1 , \ldots , N $, $ \lV \nabla \chi_{ E_i ( t ) } \rV \leq \lV V_t \rV $.
		\item $ S ( i ) := \{ ( x , t ) \in \R^2 \times [ 0 , \infty ) \mid x \in E_i ( t ) \} $ is open in $ \R^2 \times [ 0 , \infty ) $ for $ i = 1 , \ldots , N $.
		\item For all $ T > 0 $, there exists a constant $ C = C ( u , T , \Gamma_0 ) $ such that for each $ i = 1 , \ldots , N $ and $ 0 \leq t_1 < t_2 \leq T $,
		\[
			\lv ( E_i ( t_1 ) \setminus E_i ( t_2 ) ) \cup ( E_i ( t_2 ) \setminus E_i ( t_1 ) ) \rv \leq C ( t_2 - t_1 )^{ 1 / 2 }
		\]
		\item For all $ 0 \leq t_1 < t_2 < \infty $, $ \phi \in C^1_c ( \R^2 \times [ 0 , \infty ) ) $ and $ i = 1 , \ldots , N $, we have
		\begin{equation}
		\label{eq: BV u is smooth}
			\int_{ E_i ( t ) } \phi ( \cdot , t ) \, d x \bigg|_{ t = t_1 }^{ t_2 } = \int_{ t_1 }^{ t_2 } \int_{ E_i ( t ) } \partial_t \phi \, d x d t + \int_{ t_1 }^{ t_2 } \int_{ \R^2 } \phi ( h + u ) \cdot \nu_{ E_i ( t ) } \, d \lV \nabla \chi_{ E_i ( t ) } \rV d t.
		\end{equation}
	\end{enumerate}
\end{theorem}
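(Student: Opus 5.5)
This theorem is quoted from \cite[Section 4 and Appendix A]{liu2024existence}, so the plan is to retrace the Kim--Tonegawa construction \cite{kim2017mean,kim2020existence} as modified by Stuvard--Tonegawa \cite{StuvardTonegawa+2022}. The point is to check that a smooth forcing term $u \in C^1_c$ only adds controllable error terms.

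\textbf{Discrete scheme and uniform estimates.} Fix a time step $\Delta t_j \to 0$. At each step, starting from the current open partition $\{E_{j,i}\}$, perform two operations.
\begin{enumerate}
\item Apply a Lipschitz deformation that reduces the $\varepsilon_j$-smoothed area $\lV \Phi_{\varepsilon_j} * V \rV(\R^2)$ up to an admissible error. This step is what eventually produces rectifiability and integrality.
\item Flow the partition boundary for time $\Delta t_j$ along the vector field $h_{\varepsilon_j} + u$. Here $h_{\varepsilon_j}$ is the smoothed mean curvature, and $u$ enters simply as an extra smooth transport term.
\end{enumerate}
Because $\lv u \rv$ and $\lv \nabla u \rv$ are bounded, the smoothed area changes by at most
\[
\int \lv h_\varepsilon \rv \lv u \rv \le \tfrac12 \int \lv h_\varepsilon \rv^2 + C\lV V \rV(\R^2).
\]
Grönwall's inequality then gives the uniform mass bound and the $L^2$ bound on the curvature in (3). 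The same computation with a test function $\phi$ gives the approximate Brakke inequality with the term $u^\perp$. Monotonicity in time of $\lV V_t \rV(\phi) - C t$ gives a Helly-type compactness for the measures, convergence for every $t$, and hence (1) and (4). The $L^2$ curvature bound together with the varifold compactness and integrality theory of Kim--Tonegawa (Allard and Brakke's rectifiability arguments) gives (2) and integrality.

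\textbf{Sets $E_i(t)$.} For the sets, use Stuvard--Tonegawa's construction. Take limits of the characteristic functions of the discrete grains as $L^1_{loc}$ limits, and define $E_i(t)$ as the interior of the set of Lebesgue points so that they are open. Then:
\begin{itemize}
\item Disjointness (5) and the open space-time set (7) follow from the clearing-out lemma. Clearing-out remains valid for smooth $u$ because $u$ is bounded and so is negligible at small scales.
\item Lower semicontinuity of perimeter gives $\lV \nabla \chi_{E_i(t)} \rV \le \lV V_t \rV$, which is (6).
\item The $1/2$-Hölder estimate (8) comes from the discrete volume change. The symmetric difference swept in one step is bounded by $\Delta t \int \lv h_\varepsilon + u \rv \, d\lV V \rV$. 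Summing over steps and applying Cauchy--Schwarz with the $L^2$ bound gives $C (t_2 - t_1)^{1/2}$.
\end{itemize}

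\textbf{Area change formula (9).} This is the hard step. At the discrete level it holds exactly with the velocity $h_\varepsilon + u$, and the difficulty is passing to the limit in $\int\!\!\int \phi (h_\varepsilon + u) \cdot \nu \, d\lV\nabla\chi\rV\,dt$. The plan follows \cite{StuvardTonegawa+2022}.
\begin{enumerate}
\item Use Huisken's monotonicity formula, with an error $e^{Ct}$ that is available because $u$ is smooth and bounded, to bound the space-time density. This yields $d\lV V_t\rV\,dt \ll \mathcal H^2$.
\item Deduce that $\lV \nabla' \chi_{S(i)} \rV \ll d\lV V_t\rV\,dt$.
\item Identify the distributional time derivative of $\chi_{S(i)}$ through a blow-up argument. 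At $\mu$-a.e.\ point the flow is tangent to a static or moving line with normal velocity $h + u^\perp$, so the space-time normal is proportional to $(\nu, -(h+u)\cdot\nu)$.
\end{enumerate}
The smooth $u$ passes to the limit strongly, so the only genuinely delicate part is this density and absolute-continuity argument. Since $u$ is smooth here, it goes through exactly as in the $u=0$ case.
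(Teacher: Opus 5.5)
Your outline follows the route behind the quoted result: the paper gives no proof and cites Liu--Tonegawa, who adapt the Kim--Tonegawa scheme as refined by Stuvard--Tonegawa, with $u\in C^1_c$ as a lower-order perturbation. However, two steps as you state them would not go through.

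\textbf{The Lipschitz deformation step.} Every discrete step changes the grains twice: once by the area-reducing Lipschitz deformation, and once by the transport along $h_{\varepsilon_j}+u$. Your per-step bound $\Delta t\int\lv h_\varepsilon+u\rv\,d\lV V\rV$ on the symmetric difference only sees the transport. So does your claim that the discrete area-change formula holds exactly with velocity $h_\varepsilon+u$. The Lipschitz step is chosen for near-optimal area reduction, and that criterion does not stop it from displacing volume: translating a small closed component changes volume but not area. Any per-step smallness must then be summed over roughly $(t_2-t_1)/\Delta t_j\to\infty$ steps. Unless you restrict the admissible deformations so that the total volume they displace on $[t_1,t_2]$ tends to $0$ as $j\to\infty$, you get neither (8) nor an approximate discrete version of \eqref{eq: BV u is smooth} with vanishing error. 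This control is what the Stuvard--Tonegawa refinement of the scheme is needed for, so it cannot be waved through.

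\textbf{Your step 2 does not follow from step 1.} A density bound from the monotonicity formula gives $d\lV V_t\rV dt\ll\mathcal{H}^2$, hence $\mu\lfloor_{\partial^* S(i)}\ll\mathcal{H}^2\lfloor_{\partial^*S(i)}$; this is the direction of Lemma \ref{lem: V<<S}. It says nothing about $\lV\nabla'\chi_{S(i)}\rV\ll\mu$, which is the direction that transfers $\mu$-a.e.\ information to $\mathcal{H}^2$-a.e.\ points of $\partial^*S(i)$. That direction comes from passing to the limit in the approximate discrete area-change formula: apply Cauchy--Schwarz with the uniform $L^2$ bounds, then measure--function pair compactness, exactly as in Lemma \ref{lem: S<<V}. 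So it too rests on the Lipschitz-step issue above.

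\textbf{The blow-up step.} Under parabolic rescaling the velocity scales away, so a parabolic blow-up cannot identify $h+u^\perp$. What you need is that the isotropic space-time tangent plane of $\mu$ contains $(h+u^\perp,1)$. This is the $L^2$-flow property of Proposition \ref{prop: L2 cor}, which you obtain from \eqref{eq: Brakke} as in Proposition \ref{prop:L2}.

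\textbf{Item (1).} Helly's theorem and \eqref{eq: Brakke} only give $\limsup_{t\to0^+}\lV V_t\rV(\phi)\le\lV V_0\rV(\phi)$ for $\phi\ge0$, since mass could drop immediately after $t=0$. The reverse inequality needs (6), the $L^1$-continuity of $E_i(t)$ at $t=0$, lower semicontinuity of perimeter, and Assumption \ref{assum: initial}(4).
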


For a given vector field $ u $ as in \eqref{eq: critical} and $ m \in \N $, we extend $ u $ by $ 0 $ for $ t \notin [ 0 , m ] $ and we set the appropriately mollified vector field $ u^{ ( m ) } $. Let $ \{ V^{ ( m ) }_t \}_{ t \geq 0 } $ and $ \{ E_i^{ ( m ) } ( t ) \}_{ t \geq 0 } $ $ ( i = 1 , \ldots , N ) $ be the flow corresponding to $ u = u^{ ( m ) } $ in Theorem \ref{thm: existence for smooth u} and consider the limit. By the density estimate by \cite{pozzetta2020varifold} and the Meyers--Ziemer inequality \cite{meyers1977integral}, Liu--Tonegawa established the following a priori estimates (\cite[Proposition 3.3]{liu2024existence}).
\begin{prop}
\label{prop: a priori}
	Let $ u \in C^1_c ( \R^2 \times [ 0 , \infty ) ; \R^2 ) $ and let $ \{ V_t \}_{ t \geq 0 } $ be as in Theorem \ref{thm: existence for smooth u}. Then, there is an absolute constant $ c > 0 $ such that, for all $ T > 0 $, we have
	\begin{align}
		\sup_{ 0 \leq t \leq T } \lV V_t \rV ( \R^2 ) &\leq \lV V_0 \rV ( \R^2 ) \exp ( c^2 C ) \label{eq: mass a priori},\\
		\sup_{ B_r ( x ) \subset \R^2 } \frac{ \lV V_t \rV ( B_r ( x ) ) }{ r } &\leq \big( \lV V_t \rV ( \R^2 ) \big)^{ 1 / 2 } \bigg( \int_{ \R^2 } \lv h ( \cdot , V_t ) \rv^2 \, d \lV V_t \rV \bigg)^{ 1 / 2 } \ (\text{for } a.e.\, t \geq 0 ) \label{eq: density_esti},\\
		\int_0^T \int_{ \R^2 } \lv h ( \cdot , V_t ) \rv^2 \, d \lV V_t \rV d t &\leq 4 \lV V_0 \rV ( \R^2 ) ( 1 + c^2 C \exp ( c^2 C ) ) \label{eq: h a priori},\\
		\int_0^T \int_{ \R^2 } \lv u \rv^2 \, d \lV V_t \rV d t &\leq 4 c \sqrt{ C } \lV V_0 \rV ( \R^2 ) \sqrt{ 1 + c^2 C \exp ( c^2 C ) } \exp ( c^2 C / 2 ) \label{eq: u a priori},
	\end{align}
	where
	\[
		C = C ( u , T ) := \ess_{ 0 \leq t \leq T }\,\int_{ \R^2 } \lv u ( x , t ) \rv^2 \, d x \times \int_0^T \int_{ \R^2 } \lv \nabla u ( x , t ) \rv^2 \, d x d t.
	\]
\end{prop}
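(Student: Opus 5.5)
We would prove the four estimates in the order \eqref{eq: density_esti}, then a differential inequality for the mass giving \eqref{eq: mass a priori} and \eqref{eq: h a priori}, and finally \eqref{eq: u a priori}. Throughout, $ M ( t ) := \lV V_t \rV ( \R^2 ) $, $ H ( t ) := \int_{ \R^2 } \lv h ( \cdot , V_t ) \rv^2 \, d \lV V_t \rV $, $ A := \ess_{ 0 \leq t \leq T } \lV u ( \cdot , t ) \rV_{ L^2 } $ and $ B ( t ) := \lV \nabla u ( \cdot , t ) \rV_{ L^2 } $, so that $ C = A^2 \int_0^T B^2 \, d t $.

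\emph{Step 1: density bound.} Fix a time $ t $ at which $ V_t $ is integral with $ \delta V_t = - h \lV V_t \rV $ and $ H ( t ) < \infty $; by Theorem \ref{thm: existence for smooth u} this holds for a.e.\ $ t $. Test the first variation with $ g ( y ) = \varphi ( \lv y - x \rv ) ( y - x ) $, where $ \varphi $ approximates $ \chi_{ [ 0 , s ) } $. In dimension one this gives the monotonicity identity
\[
	\frac{ d }{ d s } \Big( \frac{ \lV V_t \rV ( B_s ( x ) ) }{ s } \Big) \geq - \frac{ 1 }{ s^{ 2 } } \int_{ B_s ( x ) } \lv h \rv \lv y - x \rv \, d \lV V_t \rV
\]
in the distributional sense, up to a nonnegative tangential term. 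Since $ M ( t ) < \infty $, we have $ \lV V_t \rV ( B_s ) / s \to 0 $ as $ s \to \infty $. Integrating from $ r $ to $ \infty $ and using Fubini, $ \int_{ \max\{ r , \lv y - x \rv \} }^\infty s^{ - 2 } \lv y - x \rv \, d s \leq 1 $, so
\[
	\frac{ \lV V_t \rV ( B_r ( x ) ) }{ r } \leq \int \lv h \rv \, d \lV V_t \rV \leq M ( t )^{ 1 / 2 } H ( t )^{ 1 / 2 } .
\]
This is \eqref{eq: density_esti}, in the spirit of \cite{pozzetta2020varifold}.

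\emph{Step 2: trace estimate for $ u $.} The Meyers--Ziemer inequality \cite{meyers1977integral} states that for a Radon measure $ \mu $ with $ \mu ( B_r ) \leq K r $ for all balls, one has $ \int \lv f \rv \, d \mu \leq c K \lV \nabla f \rV_{ L^1 ( \R^2 ) } $. Apply it to $ f = \lv u ( \cdot , t ) \rv^2 $, with $ K $ given by Step 1, and then use Cauchy--Schwarz:
\[
	\int \lv u \rv^2 \, d \lV V_t \rV \leq c M^{ 1 / 2 } H^{ 1 / 2 } \cdot 2 \lV u \rV_{ L^2 } \lV \nabla u \rV_{ L^2 } \leq 2 c A M^{ 1 / 2 } H^{ 1 / 2 } B ( t ) .
\]
Here we absorb constants into $ c $; in the end $ c $ is taken so that this reads $ c A M^{ 1 / 2 } H^{ 1 / 2 } B $.

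\emph{Step 3: Grönwall.} Take $ \phi = \phi_R ( x ) $ in \eqref{eq: Brakke}, a cutoff equal to $ 1 $ on $ B_R $ with $ \lv \nabla \phi_R \rv \leq 2 / R $. Let $ R \to \infty $, using finite mass and $ \int \int \lv h \rv ( \lv h \rv + \lv u \rv ) < \infty $, which holds since $ u $ is smooth and compactly supported. Since $ h \perp \Tan $ we have $ h \cdot u^\perp = h \cdot u $, and therefore, in the integrated sense,
\[
	M' \leq - H + \int \lv h \rv \lv u \rv \, d \lV V_t \rV \leq - \tfrac12 H + \tfrac12 \int \lv u \rv^2 \, d \lV V_t \rV .
\]
By Step 2 and Young's inequality, $ \tfrac12 c A M^{ 1 / 2 } H^{ 1 / 2 } B \leq \tfrac14 H + \tfrac14 c^2 A^2 B^2 M $. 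Hence
\[
	M' + \tfrac14 H \leq \tfrac14 c^2 A^2 B^2 M \leq c^2 A^2 B^2 M .
\]
Grönwall's inequality gives $ M ( t ) \leq M ( 0 ) \exp ( c^2 A^2 \int_0^T B^2 ) = M ( 0 ) e^{ c^2 C } $, which is \eqref{eq: mass a priori}. Integrating the same inequality over $ [ 0 , T ] $ gives
\[
	\int_0^T H \leq 4 \big( M ( 0 ) + c^2 C \sup_{ t \leq T } M \big) ,
\]
and inserting \eqref{eq: mass a priori} yields \eqref{eq: h a priori}.

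\emph{Step 4: the $ u $ estimate.} Integrate Step 2 in time and apply Cauchy--Schwarz in $ t $:
\[
	\int_0^T \int \lv u \rv^2 \, d \lV V_t \rV \, d t \leq c A \sup_{ t \leq T } M^{ 1 / 2 } \Big( \int_0^T H \Big)^{ 1 / 2 } \Big( \int_0^T B^2 \Big)^{ 1 / 2 } .
\]
Note that $ A ( \int_0^T B^2 )^{ 1 / 2 } = \sqrt{ C } $. Plugging in \eqref{eq: mass a priori} and \eqref{eq: h a priori} produces exactly the form \eqref{eq: u a priori}.

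The main obstacle we expect is Step 1 together with the use of Meyers--Ziemer in Step 2, which is where the critical scaling is handled. The point is that the density bound must involve only global quantities, so that it feeds back into the mass inequality and can be absorbed by Young's inequality. A secondary issue is making the monotonicity computation rigorous for a general integral varifold, where the tangential term has a sign and can be dropped, and justifying the limit $ R \to \infty $ in the Brakke inequality.
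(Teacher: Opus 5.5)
Your proposal is correct and takes the same route the paper points to in citing Liu--Tonegawa: the one-dimensional monotonicity (Pozzetta-type) density bound $\lV V_t\rV(B_r(x))/r\le\int\lv h\rv\,d\lV V_t\rV$, the Meyers--Ziemer trace inequality applied to $\lv u\rv^2$, and Brakke's inequality with $\phi\equiv 1$ closed by Young's inequality and Grönwall. If you keep the factor $2$ from $\lv\nabla\lv u\rv^2\rv\le 2\lv u\rv\lv\nabla u\rv$ explicit instead of absorbing it, and use $cAM^{1/2}H^{1/2}B\le\frac14 H+c^2A^2B^2M$, you recover the stated constants exactly, including the factor $4c\sqrt{C}$ in \eqref{eq: u a priori}.
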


Liu--Tonegawa showed that a solution of the MCF with $ u $ can be obtained as a suitable limit of $ V_t^{ ( m ) } $, $ E_i^{ ( m ) } ( t ) $ and $ u^{ ( m ) } $ by the above a priori estimates, and we quote \cite[Theorem 2.2]{liu2024existence} here. Note that they obtained various additional properties of the flow not mentioned here.

\begin{theorem}
\label{thm: existence for critical u}
	Let $ \Gamma_0 $ and $ \{ E_{ 0 , i } \}_{ i = 1 }^N $ be as in Assumption \ref{assum: initial} and let $ u \in L^{ \infty }_{ loc } ( [ 0 , \infty ) ; L^2 ( \R^2 ) ) \cap L^2_{ loc } ( [ 0 , \infty ) ; W^{ 1 , 2 } ( \R^2 ) ) $. Then there exist a family $ \{ V_t \}_{ t \geq 0 } \subset \IV_1 ( \R^2 ) $ and a family of open sets $ \{ E_i ( t ) \}_{ t \geq 0 } $ for each $ i = 1 , \ldots , N $ with the following properties:
	\begin{enumerate}
		\item $ \{ V_t \}_{ t \geq 0 } $ and $ \{ E_i ( t ) \}_{ t \geq 0 } $ satisfy the same properties as in Theorem \ref{thm: existence for smooth u} (1)-(8).
		\item For all $ T > 0 $, $ V_t $ and $ u $ satisfy the same a priori estimates as in Proposition \ref{prop: a priori}.
	\end{enumerate}
\end{theorem}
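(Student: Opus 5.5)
The plan is to obtain the flow as a limit of the flows from Theorem \ref{thm: existence for smooth u} with mollified forcing terms, using the uniform bounds of Proposition \ref{prop: a priori}. For $m \in \N$, extend $u$ by zero outside $[0,m]$, mollify in space--time and cut off in space to get $u^{(m)} \in C^1_c(\R^2 \times [0,\infty);\R^2)$. The mollification should be chosen so that, for every $T>0$,
\[
\ess_{0\le t\le T}\|u^{(m)}(\cdot,t)\|_{L^2}^2 \le \ess_{0\le t\le T}\|u(\cdot,t)\|_{L^2}^2 + o(1),
\qquad \int_0^T\|\nabla u^{(m)}\|_{L^2}^2\,dt \le \int_0^T\|\nabla u\|_{L^2}^2\,dt+o(1),
\]
so that $C(u^{(m)},T)$ stays bounded in $m$ and tends to $C(u,T)$. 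Let $(\{V^{(m)}_t\},\{E^{(m)}_i(t)\})$ be the flows given by Theorem \ref{thm: existence for smooth u}. Proposition \ref{prop: a priori} then gives, uniformly in $m$: bounds on the mass, on $\int\!\!\int |h|^2$, on $\int\!\!\int |u^{(m)}|^2\,d\|V^{(m)}_t\|dt$, and the density bound \eqref{eq: density_esti}. Property (8) gives the $C^{1/2}$-in-time $L^1$ estimate for $\chi_{E^{(m)}_i}$, with a constant depending only on these bounds.

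Next I extract limits. From the Brakke inequality \eqref{eq: Brakke}, Cauchy--Schwarz and the uniform $L^2$ bounds on $h$ and $u^{(m)}$, the maps $t\mapsto \|V^{(m)}_t\|(\phi)-\int_0^t(\cdots)$ are nonincreasing for each $\phi\ge0$. A standard diagonal argument over a countable dense family of $\phi$ (Brakke's compactness) therefore gives a subsequence and Radon measures $\mu_t$ with $\|V^{(m)}_t\|\to\mu_t$ for \emph{all} $t\ge0$. Fatou's lemma and Allard's compactness theorem for integral varifolds with $L^2$ curvature give $V_t\in \IV_1(\R^2)$ with $\|V_t\|=\mu_t$ for a.e.\ $t$, with bounded first variation and $\int_0^T\!\!\int|h|^2 \le \liminf$. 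The sets are handled in the same way. Arzel\`a--Ascoli in $C([0,T];L^1_{loc})$, using the $1/2$-Hölder estimate and BV compactness in $x$, gives limit sets $E_i(t)$ for every $t$. Lower semicontinuity of perimeter gives $\|\nabla\chi_{E_i(t)}\|\le\|V_t\|$, and (5), (8) and the initial condition (1) pass to the limit. As in Kim--Tonegawa, the open representatives and the openness of $S(i)$ in (7) come from the uniform density and clearing-out estimates. The a priori estimates in (2) follow by lower semicontinuity, since the right-hand sides converge because $C(u^{(m)},T)\to C(u,T)$.

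The main obstacle is passing to the limit in the forcing term $\int\!\!\int(\nabla\phi-\phi h)\cdot u^{(m)\perp}\,d\|V^{(m)}_t\|dt$ in \eqref{eq: Brakke}. Here $u$ is critical: it is only in $L^2$ in space, it lives on a measure whose density is merely bounded by \eqref{eq: density_esti}, and its norm does not decay at small scales. I would first use the density bound together with the Meyers--Ziemer trace inequality. This controls $\int |u^{(m)}-u|^2\,d\|V^{(m)}_t\|$ by $\|V^{(m)}_t\|(\R^2)^{1/2}\big(\int|h|^2\big)^{1/2}$ times the $W^{1,2}$-type norm of $u^{(m)}-u$ (suitably interpolated with the $L^\infty_tL^2_x$ norm). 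After integrating in time with Hölder, this error tends to zero. So one may replace $u^{(m)}$ by $u$, and further by a smooth approximation $\tilde u$ that is uniformly close in the same trace sense. For fixed smooth $\tilde u$, I pass to the limit using weak convergence of the varifolds, which handles the projection $\perp$ via $S^\perp$ on the Grassmannian. The term $\phi h\cdot\tilde u^\perp$ converges by the weak $L^2$ convergence of $h^{(m)}\|V^{(m)}_t\|$ to $h\|V_t\|$, while $-\phi|h|^2$ is handled by lower semicontinuity. Integrating in time with dominated convergence, using the uniform-in-$t$ bounds, then yields \eqref{eq: Brakke} for the limit with the critical $u$.
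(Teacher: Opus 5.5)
Your overall route is the one the paper indicates. The paper does not reprove this theorem; it quotes Liu--Tonegawa after describing the same scheme: truncate and mollify $u$, apply Theorem \ref{thm: existence for smooth u}, use Proposition \ref{prop: a priori} uniformly in $m$, and pass to the limit.

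Your treatment of the critical forcing term is sound. With $M_t$ the density ratio bound from \eqref{eq: density_esti}, the trace inequality gives
$\int_0^T\!\int|u^{(m)}-u^{(k)}|^2\,d\lV V^{(m)}_t\rV\,dt\le c\,\sup_t\lV u^{(m)}(t)-u^{(k)}(t)\rV_{L^2}\big(\int_0^T M_t^2\,dt\big)^{1/2}\lV\nabla(u^{(m)}-u^{(k)})\rV_{L^2(\R^2\times(0,T))}$.
Since $\int_0^T M_t^2\,dt$ is bounded uniformly in $m$, this error vanishes even though $u^{(m)}\not\to u$ in $L^\infty_tL^2_x$.

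Two smaller corrections apply to this part.
\begin{itemize}
\item For a fixed smooth $\tilde u$, the limit in the Brakke inequality has to be taken by Fatou along $t$-dependent subsequences, not by dominated convergence.
\item The bound \eqref{eq: density_esti} for the limit flow, which you need to pass back from $\tilde u$ to $u$, does not follow from lower semicontinuity. Indeed $\int|h|^2\,d\lV V_t\rV$ is semicontinuous in the wrong direction. The bound holds instead because it is valid for every one-dimensional integral varifold with $L^2$ mean curvature.
\end{itemize}

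The genuine gap is the openness in (5) and (7), together with the choice of representatives compatible with (6) at every time. You say these come ``as in Kim--Tonegawa'' from uniform density and clearing-out estimates, but no such estimates are available uniformly in $m$.
\begin{itemize}
\item The bound \eqref{eq: density_esti} holds only for a.e.\ $t$, with a constant $M_t$ that is merely square integrable in time. It is not uniform for $t\ge t_0>0$, as a monotonicity formula would give.
\item Any clearing-out lemma proved in the usual way for smooth $u^{(m)}$ treats the forcing as a perturbation that is small at small scales. Its constants depend on norms of $u^{(m)}$ that $C(u,T)$ does not control.
\item In the critical regime, the forcing contribution to the local mass balance on a parabolic cylinder $Q_r=B_r\times I_r$ with $\lv I_r\rv=r^2$ is controlled only by $c\,\lV u\rV_{L^\infty L^2}\big(\int_{I_r}M_t^2\,dt\big)^{1/2}\lV\nabla u\rV_{L^2(Q_r)}$. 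This is $o(1)$ but not $o(r)$, so the standard clearing-out argument does not close as it stands. This is exactly the lack of monotonicity the paper emphasizes.
\end{itemize}

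To obtain (5) and (7) you need a clearing-out, or mass-persistence, lemma for the approximating flows. Its constants should depend only on the mass and curvature bounds, on $C(u,T)$, and on the equi-integrability of $|\nabla u^{(m)}|^2$. It must be strong enough to show that every time slice of $\spt\mu$ is Lebesgue null. Only then does $\{(x,t)\notin\spt\mu:\ \chi_{E_i}=1\text{ near }(x,t)\}$ give open representatives that agree with the $L^1$ limits at every time.

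As written, your argument establishes the Brakke inequality, the initial condition, (6), (8) and the a priori estimates. It does not establish the openness statements, which the paper takes from Liu--Tonegawa without proof.
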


\section{Proof of Theorem \ref{thm:L2 flow area change}}
\label{sec:proof of area change}
Let $ \{ V_t \}_{ t \geq 0 } $, $ \{ E_i ( t ) \}_{ t \geq 0 } $ as in Theorem \ref{thm:L2 flow area change}. Throughout this section, for simplicity, we assume $ N = 2 $ and write $ E_1 ( t ) = E_t $ and $ E_2 ( t ) = \R^{ n + 1 } \setminus E_1 ( t ) $ without loss of generality. Since Theorem \ref{thm:L2 flow area change} is proved independently of the result in \cite{liu2024existence}, no restriction on the dimension is required. We let $ n \in \N $, which describes the dimension of the flow.

For the purpose of this paper, we introduce the following notion of velocity for a family of varifolds.

\begin{definition}
	\label{def:L2 flow}
	A family of varifolds $ \{ V_t \}_{ t \geq 0 } $ is \textit{an} $ n $-\textit{dimensional $ L^2 $ flow with velocity $ v $} if it satisfies the following:
	\begin{description}
		\item[\textup{(a)}] For almost every $ t \geq 0 $, $ V_t \in \IV_n ( \R^{ n + 1 } ) $ and $ V_t $ has its generalized weak mean curvature $ h ( \cdot , V_t ) $.
		\item[\textup{(b)}] $ d \mu := d \lV V_t \rV d t $ is a Radon measure on $ \R^{ n + 1 } \times [ 0 , \infty ) $.
		\item[\textup{(c)}] There exists a vector field $ v \in L^2_{ loc } ( \mu ; \R^{ n + 1 } ) $ such that
		\begin{description}
			\item[\textup{(c'1)}] $ v ( x , t ) \perp T_x \, \lV V_t \rV $ for $ \mu $-almost every $ ( x , t ) \in \R^{ n + 1 } \times [ 0 , \infty ) $,
			\item[\textup{(c'2)}] For every test functions $ \phi \in C^1_c ( \R^{ n + 1 } \times [ 0 , \infty ) ) $, it holds that
			\begin{equation}
				\label{eq:L2 flow}
				\Bigg\lv \int_0^{ \infty } \int_{ \R^{ n + 1 } } \partial_t \phi + \nabla \phi \cdot v \, d \lV V_t \rV d t  \Bigg\rv \leq C \sup \lv \phi \rv
			\end{equation}
			for some $ C = C ( \{ V_t \}_{ t \geq 0 } , v , \spt \phi ) > 0 $.
		\end{description}
	\end{description}
\end{definition}

This is well-known as the $ L^2 $ flow, a characterization of the weak velocity of varifolds proposed by \cite{roger2008allen}. The following is a simple corollary of the above proposition (\cite[Proposition 3.3]{roger2008allen}).

\begin{prop}
	\label{prop: L2 cor}
	It holds that
	\[
	\begin{pmatrix}
		v ( x , t )\\
		1
	\end{pmatrix}
	\in \Tan_{ ( x , t ) } \mu
	\]
	at $ \mu $-almost every $ ( x , t ) $ whenever the approximate tangent space $ \Tan_{ ( x , t ) } \mu $ exists.
\end{prop}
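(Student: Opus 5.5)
The plan is a blow-up argument in space-time $\R^{n+1}\times\R$, working with the vector-valued Radon measure $W:=(v,1)\,\mu$. Since $v\in L^2_{loc}(\mu)\subset L^1_{loc}(\mu)$, $W$ is a well-defined $\R^{n+2}$-valued Radon measure. Condition (c'2) of Definition \ref{def:L2 flow} says that the linear functional
\[
\phi\mapsto -\int \nabla'\phi\cdot d W=-\int_0^\infty\int_{\R^{n+1}}\partial_t\phi+\nabla\phi\cdot v\,d\lV V_t\rV dt
\]
is bounded by $C\sup\lv\phi\rv$ on each compact set. By the Riesz representation theorem its distributional divergence is therefore a signed Radon measure: $\nabla'\cdot W=D$ with $\lv D\rv$ Radon. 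Because only integration against $\nabla'\phi$ is involved, this remains true up to $t=0$ away from the boundary, and we work in the open set $t>0$.

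\textbf{Choice of good points.} Fix a point $(x_0,t_0)$ at which three properties hold simultaneously.
\begin{enumerate}
\item $\Tan_{(x_0,t_0)}\mu$ exists, i.e. $r^{-(n+1)}(\eta_{(x_0,t_0),r})_\#\mu\to\theta\,\mathcal H^{n+1}\lfloor_P$ for some $(n+1)$-plane $P$ and some $\theta\in(0,\infty)$.
\item $(x_0,t_0)$ is a $\mu$-Lebesgue point of $v$, so that $\mu(B_r)^{-1}\int_{B_r}\lv v-v_0\rv\,d\mu\to0$ with $v_0:=v(x_0,t_0)$.
\item $\limsup_{r\to0}\lv D\rv(B_r(x_0,t_0))/\mu(B_r(x_0,t_0))<\infty$.
\end{enumerate}
Property (2) holds $\mu$-a.e. by the Besicovitch differentiation theorem. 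Property (3) also holds $\mu$-a.e.: the set where the upper derivative of $\lv D\rv$ with respect to $\mu$ is infinite is $\mu$-null by the standard density comparison lemma. So it suffices to prove the claim at such points.

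\textbf{Rescaling.} Set $\mu_r:=r^{-(n+1)}(\eta_r)_\#\mu$ and $W_r:=r^{-(n+1)}(\eta_r)_\#W$, where $\eta_r(y)=(y-(x_0,t_0))/r$. By (1) and (2), $W_r\to(v_0,1)\,\theta\,\mathcal H^{n+1}\lfloor_P$ weakly as vector measures. Indeed, $\lv W_r-(v_0,1)\mu_r\rv(B_R)$ equals $r^{-(n+1)}\int_{B_{Rr}}\lv v-v_0\rv\,d\mu$, and this tends to $0$ because $\mu(B_{Rr})\sim\theta\omega_{n+1}(Rr)^{n+1}$.

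For the divergence, a change of variables gives $\nabla'\cdot W_r=r^{-n}(\eta_r)_\#D$. Hence
\[
\lv\nabla'\cdot W_r\rv(B_R)=r^{-n}\lv D\rv(B_{Rr})\le C\,r^{-n}\mu(B_{Rr})\le C'R^{n+1}r\to0
\]
by (3) and (1). Passing to the limit in $\int\nabla'\phi\cdot dW_r$, the limit measure $w\,\mathcal H^{n+1}\lfloor_P$, with the constant vector $w=\theta(v_0,1)$, is divergence free.

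\textbf{Conclusion and main obstacle.} It remains to show that a constant vector $w$ with $\nabla'\cdot(w\,\mathcal H^{n+1}\lfloor_P)=0$ must lie in $P$. To see this, decompose $w=w^T+w^\perp$ and test against $\phi(y)=\psi(y^T)\,\langle y,e\rangle\,\zeta(\lv y^\perp\rv)$, where $e=w^\perp/\lv w^\perp\rv$ and $\zeta\equiv1$ near $0$. On $P$ one has $\langle y,e\rangle=0$, so
\[
\int_P w\cdot\nabla'\phi\,d\mathcal H^{n+1}=\lv w^\perp\rv\int_P\psi\,d\mathcal H^{n+1},
\]
which is nonzero for suitable $\psi$ unless $w^\perp=0$. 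Thus $(v_0,1)\in P=\Tan_{(x_0,t_0)}\mu$.

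I expect the main obstacle to be the estimate for the rescaled divergence: one must show $r^{-n}\lv D\rv(B_r)\to0$. This needs the comparison of $\lv D\rv$ with $\mu$ in (3), and not merely local finiteness of $D$. Some care is also needed to upgrade the weak convergence of $W_r$ to its validity on test functions whose support is not compact in the normal direction; this is handled by the cutoff $\zeta$.
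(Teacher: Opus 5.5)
Your blow-up argument is correct. The three $\mu$-a.e.\ properties you select are existence of $\Tan_{(x,t)}\mu$, the $\mu$-Lebesgue point property of $v$, and finiteness of the upper derivative of $\lvert D\rvert$ with respect to $\mu$. Together they give $\lvert \nabla'\cdot W_r\rvert(B_R)\to 0$, so the limit $\theta(v_0,1)\,\mathcal{H}^{n+1}\lfloor_P$ is divergence free, and your test function then forces $(v_0,1)\in P$.

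The paper gives no argument of its own here and simply cites Proposition 3.3 of \cite{roger2008allen}. That result rests on essentially this same blow-up of the space-time divergence of $(v,1)\mu$, so your route matches the intended one.
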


From the measure-theoretic inequality $ \lV \nabla \chi_{ E_t } \rV \leq \lV V_t \rV $, we have the space-time absolute continuity between $ \lV \nabla^{ \prime } \chi_E \rV $ and $ \mu $ as follows.

\begin{lemma}
	\label{lem: V<<S}
	Let $ \{ V_t \}_{ t \geq 0 } $ and $ \{ E_t \}_{ t \geq 0 } $ satisfy all the conditions of Theorem \ref{thm:L2 flow area change}. Then we have $ \mu \lfloor_{ \partial^* E } \ll \lV \nabla^{ \prime } \chi_E \rV $ in $ \R^{ n + 1 } \times ( 0 , \infty ) $.
\end{lemma}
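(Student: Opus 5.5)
We have to show that $\mu\lfloor_{\partial^*E}\ll \lV \nabla'\chi_E\rV$, where $\partial^*E$ is the space-time reduced boundary and $\lV\nabla'\chi_E\rV=\mathcal{H}^{n+1}\lfloor_{\partial^*E}$. So it suffices to show the following: if $A\subset\partial^*E$ satisfies $\mathcal{H}^{n+1}(A)=0$, then $\mu(A)=0$. The plan is a slicing argument. We compare the space-time perimeter on $\partial^* E$ with the time slices $\partial^*E_t$, and then use $\lV\nabla\chi_{E_t}\rV\le\lV V_t\rV$ only in the direction we need. The key observation is that $\mu(A)=\int_0^\infty \lV V_t\rV(A_t)\,dt$ with $A_t:=\{x:(x,t)\in A\}$, so we must control $\lV V_t\rV(A_t)$ for a.e. $t$.

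\textbf{Step 1 (slicing).} Since $\chi_E\in BV_{loc}$ by assumption (3), the coarea/slicing theory for sets of finite perimeter applies to the projection $(x,t)\mapsto t$. For a.e. $t$, the slice $E_t$ has finite perimeter and $\mathcal{H}^n(\partial^*E_t\,\triangle\,(\partial^*E)_t)=0$. Moreover,
\[
\int_0^\infty \mathcal{H}^n\big((\partial^*E)_t\cap B\big)\,dt=\int_{\partial^*E\cap B}\lv\nabla_x\text{-component of }\nu_E\rv\,d\mathcal{H}^{n+1}.
\]
Applied to $B=A$, this gives $\mathcal{H}^n(A_t)=0$ for a.e. $t$. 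The same conclusion also follows directly from Eilenberg's inequality for the Lipschitz projection, $\int^*\mathcal{H}^n(A_t)\,dt\le C\mathcal{H}^{n+1}(A)=0$, which avoids BV slicing for this part.

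\textbf{Step 2 (the hard part).} We then need $\lV V_t\rV(A_t)=0$ for a.e. $t$. The measure $\lV V_t\rV$ is $\mathcal{H}^n$ times a density on a rectifiable set, by (a) of Definition~\ref{def:L2 flow}, so $\mathcal{H}^n(A_t)=0$ forces $\lV V_t\rV(A_t)=0$ wherever the density is finite. Assumption (1) gives $\Theta^{*n}(x,\lV V_t\rV)<\infty$ for $\lV V_t\rV$-a.e. $x$. Hence, by the standard density comparison $\lV V_t\rV\lfloor_{\{\Theta^{*n}\le k\}}\le 2^nk\,\mathcal{H}^n\lfloor_{\{\Theta^{*n}\le k\}}$, letting $k\to\infty$ yields $\lV V_t\rV(A_t)=0$ for a.e. $t$. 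Integrating in $t$ gives $\mu(A)=0$.

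The main obstacle is measurability: $A$ is only $\mathcal{H}^{n+1}$-null, and for the Fubini-type identity $\mu(A)=\int\lV V_t\rV(A_t)\,dt$ we need $A$ to be $\mu$-measurable. I would replace $A$ by a Borel $\mathcal{H}^{n+1}$-null superset $\tilde A\supset A$ intersected with the Borel set $\partial^*E$. Then $t\mapsto \lV V_t\rV(\tilde A_t)$ is measurable by (b), and Steps 1–2 apply to $\tilde A$. The exceptional $t$-sets from assumption (1) and from Step 1 are both null, so their union is harmless.

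Note that the hypothesis $\lV\nabla\chi_{E_t}\rV\le\lV V_t\rV$ plays no role in this direction. It becomes relevant for the converse inclusion, $\lV\nabla'\chi_E\rV\ll\mu$, which is assumption (2).
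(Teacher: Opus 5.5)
Your proof is correct. It uses the same two basic ingredients as the paper: Fubini in $t$, and $\lV V_t\rV\ll\mathcal{H}^n$ for a.e.\ $t$ via assumption (1) and Simon's Theorem~3.2. The difference is in how you pass the null set to the time slices.

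The paper goes through the intermediate measure $d\lV\nabla\chi_{E_t}\rV\,dt$. It uses the slicing theorem for sets of finite perimeter (Maggi, Theorem~18.11) to replace $(\partial^*E)_t$ by $\partial^*E_t$ up to $\mathcal{H}^n$-null sets for a.e.\ $t$. This gives $\mu\lfloor_{\partial^*E}=d\lV V_t\rV\lfloor_{\partial^*E_t}\,dt\ll d\lV\nabla\chi_{E_t}\rV\,dt$. It then concludes with the coarea inequality $\int_0^\infty\lV\nabla\chi_{E_t}\rV\,dt\le\lV\nabla'\chi_E\rV$.

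You instead slice the null set $A$ directly, so you never need to compare $(\partial^*E)_t$ with $\partial^*E_t$. Your Eilenberg variant is the leaner route, and it proves more. It uses neither the BV slicing theory nor the restriction $A\subset\partial^*E$. Hence it actually gives $\mu\ll\mathcal{H}^{n+1}$ on all of $\R^{n+1}\times(0,\infty)$. That is exactly the property which the remark after the lemma attributes to a space-time density bound from Huisken's monotonicity formula.

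You also do not need the hedge ``wherever the density is finite''. Since $V_t\in\IV_n(\R^{n+1})$ for a.e.\ $t$, one has $\lV V_t\rV=\theta\,\mathcal{H}^n\lfloor_M$ with $\theta$ locally $\mathcal{H}^n$-integrable. So $\lV V_t\rV\ll\mathcal{H}^n$ already follows from Definition~\ref{def:L2 flow}~(a), without assumption (1).

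What the paper's route gives in exchange is the slice identity $\lV V_t\rV\lfloor_{(\partial^*E)_t}=\lV V_t\rV\lfloor_{\partial^*E_t}$ for a.e.\ $t$. The same slicing theorem is used again in Proposition~\ref{prop: basic}.
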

\begin{proof}
	First, we note that $ d \mu \lfloor_{ \partial^* E } = d \lV V_t \rV \lfloor_{ ( \partial^* E )_t } d t $, where
	\[
		( \partial^* E )_t := \{ x \in \R^{ n + 1 } \mid ( x , t ) \in \partial^* E \}.
	\]
	Since it follows from the co-area formula (see \cite[Theorem 13.1]{maggi2012sets}) that
	\[
		\int_0^{ \infty } \int_{ \R^{ n + 1 } } \phi \, d \lV \nabla \chi_{ E_t } \rV d t \leq \int_{ \R^{ n + 1 } \times ( 0 , \infty ) } \phi \, d \lV \nabla^{ \prime } \chi_E \rV
	\]
	for all $ \phi \in C_c ( \R^{ n + 1 } \times ( 0 , \infty ) ) $ with $ \phi \geq 0 $, if we prove $ d \lV V_t \rV \lfloor_{ ( \partial^* E )_t } d t \ll d \lV \nabla \chi_{ E_t } \rV d t $, one can obtain the desired absolute continuity
	\[
		d \mu \lfloor_{ \partial^* E } = d \lV V_t \rV \lfloor_{ ( \partial^* E )_t } d t \ll d \lV \nabla \chi_{ E_t } \rV d t \ll d \lV \nabla^{ \prime } \chi_E \rV \text{ in } \R^{ n + 1 } \times ( 0 , \infty ).
	\]
	Next, we see that $ \lV V_t \rV \ll \mathcal{ H }^n $ holds for almost every $ t \geq 0 $. Indeed, let $ A \subset \R^{ n + 1 } $ be a set with $ \mathcal{ H }^n ( A ) = 0 $ and let
	\[
		D_k := \{ x \in \R^{ n + 1 } \mid \Theta^{ * n } ( x , \lV V_t \rV )\leq k \}.
	\]
	For almost every $ t \geq 0 $, the upper density $ \Theta^{ * n } ( x , \lV V_t \rV ) $ is finite by assumption in Theorem \ref{thm:L2 flow area change} \eqref{itm:density assumption} for $ \lV V_t \rV $-almost every $ x \in \R^{ n + 1 } $. Let such a $ t \geq 0 $ fix. From \cite[Theorem 3.2]{simon1983lectures}, it follows that $ \lV V_t \rV ( A \cap D_k ) \leq 2 k \mathcal{ H }^n ( A \cap D_k ) = 0 $ for all $ k \in \N $. By the finiteness of the upper density of $ \lV V_t \rV $, we have $ \lV V_t \rV ( A \setminus \bigcup_{ k \in \N } D_k ) = 0 $. Therefore, we obtain $ \lV V_t \rV ( A ) = 0 $, which implies $ \lV V_t \rV \ll \mathcal{ H }^n $ for almost every $ t \geq 0 $. Since
	\[	
	\mathcal{ H }^n ( ( ( \partial^* E )_t \setminus \partial^* E_t ) \cup ( \partial^* E_t \setminus ( \partial^* E )_t ) ) = 0 \text{ for } a.e.\,t \geq 0
	\]
	holds by the general theory of the set of finite perimeter (see \cite[Theorem 18.11]{maggi2012sets}, for example) and $ \lV V_t \rV \ll \mathcal{ H }^n $ for almost every $ t \geq 0 $, we have
	\[
	\lV V_t \rV \lfloor_{ ( \partial^* E )_t } = \lV V_t \rV \lfloor_{ \partial^* E_t } \text{ for } a.e.\,t \geq 0.
	\]
	Let $ A \subset \R^{ n + 1 } \times ( 0 , \infty ) $ be a set with $ \int \int_A d \lV \nabla \chi_{ E_t } \rV d t = 0 $. Since
	\[
	0 = \int \int_A d \lV \nabla \chi_{ E_t } \rV d t = \int_0^{ \infty } \int_{ A_t } d \lV \nabla \chi_{ E_t } \rV d t
	\]
	holds, where $ A_t = \{ x \in \R^{ n + 1 } \mid ( x , t ) \in A \} $, we particularly have $ \lV \nabla \chi_{ E_t } \rV ( A_t ) = 0 $ for almost every $ t > 0 $. By the absolute continuity $ \lV V_t \rV \ll \mathcal{ H }^n $, we obtain $ \lV V_t \rV \lfloor_{ \partial^* E_t } ( A_t ) = 0 $ for almost every $ t > 0 $, which gives $ d \lV V_t \rV \lfloor_{ \partial^* E_t } d t \ll d \lV \nabla \chi_{ E_t } \rV d t $. This completes the proof.
\end{proof}

\begin{remark}
	With respect to assumption \eqref{itm:density assumption} in Theorem \ref{thm:L2 flow area change}, one can see that the space-time density of the Brakke flow is finite almost everywhere because of the argument from Huisken's monotonicity formula (see \cite[Proposition 3.5]{tonegawa2019brakke}), which leads to the stronger measure-theoretic continuity $ \mu \ll \mathcal{ H }^{ n + 1 } $ in $ \R^{ n + 1 } \times ( 0 , \infty ) $  than Lemma \ref{lem: V<<S}. Stuvard and Tonegawa used the stronger property to prove the existence of the generalized BV flow in \cite{StuvardTonegawa+2022}. However, this density argument is not applicable to the flows which no Huisken-type monotonicity formula is known, such as the flow from \cite{liu2024existence}. We thus derive the area change formula \eqref{eq:v area change formula} from the weak continuity in Lemma \ref{lem: V<<S} only.
\end{remark}

The key step of the proof of Theorem \ref{thm:L2 flow area change} is to prove the following proposition, for which the property of $ L^2 $ flow plays a central role.

\begin{prop}
	\label{prop: basic}
	The restricted Radon measure $ \mu \lfloor_{ \partial^* E } $ is ($ n + 1 $)-rectifiable and we have the following for $ \mathcal{ H }^{ n + 1 } $-almost every $ ( x , t ) \in \partial^* E \cap \{ t > 0 \} $:
	\begin{enumerate}
		\item the approximate tangent space $ T_{ ( x , t ) } \, \mu $ exists and $ T_{ ( x , t ) } \, \mu = T_{ ( x , t ) } \, ( \partial^* E ) $,
		\item $ {}^t ( v ( x , t ) , 1 ) \in T_{ ( x , t ) } \, \mu $,
		\item $ x \in \partial^* E $ and $ T_x \, \lV V_t \rV = T_x \, ( \partial^* E_t ) $,
		\item $ \mathbf{ p } ( \nu_{ E } ) \neq 0 $ and $ \nu_{ E_t } ( \cdot ) = \lv \mathbf{ p } ( \nu_{ E } ( \cdot , t ) ) \rv^{ - 1 } \mathbf{ p } ( \nu_{ E } ( \cdot , t ) ) $, where $ \mathbf{ p } $ denotes the projection of $ \R^{ n + 1 } \times \R $ onto $ \R^{ n + 1 } $,
		\item $ T_x \, ( \partial^* E_t ) \times \{ 0 \} $ is the linear subspace of $ T_{ ( x , t ) } \, \mu $.
	\end{enumerate}
\end{prop}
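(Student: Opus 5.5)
Since $\chi_E \in BV_{loc}$ by assumption \eqref{itm:continuity assumption}, $\partial^* E$ is a countably $(n+1)$-rectifiable subset of $\R^{n+1}\times(0,\infty)$ with $\lV \nabla'\chi_E\rV = \mathcal{H}^{n+1}\lfloor_{\partial^* E}$. I will combine this with the two absolute continuities: $\lV \nabla'\chi_E\rV \ll \mu$ from assumption \eqref{itm:measure assumption}, and $\mu\lfloor_{\partial^*E} \ll \lV\nabla'\chi_E\rV$ from Lemma \ref{lem: V<<S}. Hence $\mu\lfloor_{\partial^*E}$ and $\mathcal{H}^{n+1}\lfloor_{\partial^*E}$ are mutually absolutely continuous. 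The Radon--Nikodym theorem then gives $\mu\lfloor_{\partial^* E} = \theta\,\mathcal{H}^{n+1}\lfloor_{\partial^*E}$ with $\theta\in L^1_{loc}(\mathcal{H}^{n+1}\lfloor_{\partial^*E})$ and $0<\theta<\infty$ $\mathcal{H}^{n+1}$-a.e. on $\partial^*E$. This is exactly $(n+1)$-rectifiability of $\mu\lfloor_{\partial^*E}$.

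For (1), I take $(x,t)\in\partial^*E$ that is a Lebesgue point of $\theta$ with respect to $\mathcal{H}^{n+1}\lfloor_{\partial^*E}$, has $\theta(x,t)\in(0,\infty)$, and is a point of density one of $\partial^*E$ for $\mu$, meaning $\mu\lfloor_{(\partial^*E)^c}(B_r)=o(r^{n+1})$. The last condition holds $\mu$-a.e. on $\partial^*E$ by the density theorem for mutually singular parts (Federer/Simon 3.5). At such points, the blow-ups of $\mu$ coincide with those of $\theta\mathcal{H}^{n+1}\lfloor_{\partial^*E}$, which converge to $\theta(x,t)\mathcal{H}^{n+1}\lfloor_{T_{(x,t)}\partial^*E}$ by De Giorgi's theorem. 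This identifies the approximate tangent space of $\mu$ with $T_{(x,t)}(\partial^*E) = \nu_E(x,t)^\perp$. Item (2) then follows immediately from Proposition \ref{prop: L2 cor}, since the exceptional $\mu$-null set is $\mathcal{H}^{n+1}\lfloor_{\partial^*E}$-null by the absolute continuity.

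For (3) and (4), I slice. By \cite[Theorem 18.11]{maggi2012sets}, for a.e. $t$ the slice $(\partial^*E)_t$ agrees with $\partial^*E_t$ up to an $\mathcal{H}^n$-null set. Moreover, $\nu_{E_t}=\mathbf p(\nu_E)/|\mathbf p(\nu_E)|$ $\mathcal{H}^n$-a.e. on the slice, at points where $\mathbf p(\nu_E)\ne0$. The points with $\mathbf p(\nu_E)=0$ form a set of zero $\mathcal{H}^{n+1}$ measure in $\partial^*E$. The reason is the coarea formula on the rectifiable set $\partial^*E$ for the time projection: its coarea factor is $|\mathbf p(\nu_E)|$, and the identity must be converted from statements "for a.e. $t$, $\mathcal H^n$-a.e. on slice" to "$\mathcal{H}^{n+1}$-a.e. on $\partial^*E$". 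This conversion is the subtle point.

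The main obstacle is exactly the zero set $Z=\{\mathbf p(\nu_E)=0\}$, on which slices carry no information. Here I use (2): on $Z$ the tangent space is $\R^{n+1}\times\{0\}$, which cannot contain ${}^t(v,1)$. Hence (2) forces $\mathcal{H}^{n+1}(Z)=0$. Off $Z$, the coarea formula shows that an $\mathcal{H}^{n+1}$-null set of $\partial^*E$ is exactly one whose slices are $\mathcal{H}^n$-null for a.e. $t$. Thus the slice statements transfer to $\mathcal{H}^{n+1}$-a.e. statements, giving $x\in\partial^*E_t$ and formula (4).

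For the tangent-space part of (3), I use that $\lV V_t\rV$ is integral and $\lV\nabla\chi_{E_t}\rV\le\lV V_t\rV$. By locality of approximate tangent planes of rectifiable measures, $T_x\lV V_t\rV=T_x\partial^*E_t$ for $\mathcal{H}^n$-a.e. $x\in\partial^*E_t$, for a.e. $t$, and this transfers as before. Finally, for (5), note that $T_x(\partial^*E_t)\times\{0\}=\nu_{E_t}^\perp\times\{0\}$, and by (4) $\nu_{E_t}$ is parallel to $\mathbf p(\nu_E)$. So each vector $(w,0)$ with $w\perp\nu_{E_t}$ is orthogonal to $\nu_E=(\mathbf p(\nu_E),\nu_E^{t})$, hence lies in $T_{(x,t)}\partial^*E=T_{(x,t)}\mu$ by (1).
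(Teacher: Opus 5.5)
Your proposal is correct and follows the paper's proof step for step: mutual absolute continuity of $\mu\lfloor_{\partial^*E}$ and $\lV\nabla'\chi_E\rV$ plus Radon--Nikod\'ym for rectifiability, blow-ups at Lebesgue points of the density combined with \cite[Theorem 3.5]{simon1983lectures} for (1), Proposition \ref{prop: L2 cor} for (2), the coarea formula for the time projection to transfer the slice statements of \cite[Theorem 18.11]{maggi2012sets} into (3)--(4), a fixed-$t$ locality/blow-up argument for the tangent-plane identity in (3), and the same orthogonality computation for (5). The one thing to fix is the sentence in your third paragraph suggesting that the coarea formula by itself makes $\{\mathbf{p}(\nu_E)=0\}$ $\mathcal{H}^{n+1}$-null: it cannot, because such ``horizontal'' parts of $\partial^*E$ are invisible to the slices, and it is your subsequent use of (2) that shows the coarea factor $\lv\mathbf{p}(\nu_E)\rv$ is positive $\mathcal{H}^{n+1}$-a.e., exactly as in the paper.
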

\begin{proof}
	By assumption \eqref{itm:measure assumption} in Theorem \ref{thm:L2 flow area change} and Lemma \ref{lem: V<<S}, we see that
	\[
		\lV \nabla' \chi_E \rV \ll \mu \lfloor_{ \partial^* E } \text{ and } \mu \lfloor_{ \partial^* E } \ll \lV \nabla' \chi_E \rV.
	\]
	By the Radon--Nykod\'{y}m theorem, there exists a function
	\[
		f = \frac{ \mu \lfloor_{ \partial^* E } }{ \lV \nabla' \chi_E \rV } \in L^1 ( \lV \nabla' \chi_E \rV )
	\]
	such that $ \mu \lfloor_{ \partial^* E } = f \, \lV \nabla^{ \prime } \chi_E \rV $ with $ 0 < f < \infty $ $ \mathcal{ H }^{ n + 1 } $-almost everywhere, which shows that $ \mu \lfloor_{ \partial^* E } $ is ($ n + 1 $)-rectifiable.
	
	We next prove that $ T_{ ( x , t ) } \, \mu = T_{ ( x , t ) } \, ( \partial^* E ) $ for $ \mathcal{ H }^{ n + 1 } $-almost every $ ( x , t ) \in \partial^* E \cap \{ t > 0 \} $. By \cite[Theorem 3.5]{simon1983lectures}, one sees that
	\[
	\limsup_{ r \to + 0 } \frac{ \mu ( B_r ( x , t ) \setminus \partial^* E ) }{ r^{ n + 1 } } = 0 \quad \text{ for } \mathcal{ H }^{ n + 1 } \text{-} a.e. \, ( x , t ) \in \partial^* E \cap \{ t > 0 \}.
	\]
	Let $ \phi \in C_c ( B_1 ( 0 , 0 ) ) $ be arbitrary. We then have
	\begin{equation*}
		\begin{split}
			\lim_{ r \to + 0 } &\bigg\lv \int_{ \R^{ n + 1 } \times ( 0 , \infty ) \setminus \partial^* E } \frac{ 1 }{ r^{ n + 1 } } \phi \bigg( \frac{ y - x }{ r } , \frac{ s - t }{ r } \bigg) \, d \mu ( y , s ) \bigg\rv\\
			&\leq \sup_{ B_1 ( 0 , 0 ) } \lv \phi \rv \limsup_{ r \to + 0 } \frac{ \mu ( B_r ( x , t ) \setminus \partial^* E ) }{ r^{ n + 1 } } = 0
		\end{split}
	\end{equation*}
	for $ \mathcal{ H }^{ n + 1 } $-almost every $ ( x , t ) \in \partial^* E \cap \{ t > 0 \} $. Thus, at each Lebesgue point of $ f $, we obtain
	\[
	\lim_{ r \to + 0 } \int_{ \R^{ n + 1 } \times ( 0 , \infty ) } \frac{ 1 }{ r^{ n + 1 } } \phi \bigg( \frac{ y - x }{ r } , \frac{ s - t }{ r } \bigg) \, d \mu ( y , s ) = f ( x , t ) \int_{ T_{ ( x , t ) } \, ( \partial^* E ) } \phi \, d \mathcal{ H }^{ n + 1 },
	\]
	which completes the proof of $ T_{ ( x , t ) } \, \mu = T_{ ( x , t ) } \, ( \partial^* E ) $ by $ f \in L^1 ( \lV \nabla^{ \prime } \chi_E \rV ) $. Therefore (1) is satisfied. By Proposition \ref{prop: L2 cor}, we have (2).

	Next, we show that parts (3) and (4). By the standard theorem of set of finite perimeter (see \cite[Theomre 18.11]{maggi2012sets}), one sees the following for almost every $ t > 0 $ and $ \mathcal{ H }^n $-almost every $ x \in ( \partial^* E_t ) $:
	\begin{align}
		\mathcal{ H }^n ( ( ( \partial^* E )_t \setminus \partial^* E_t ) \cup ( \partial^* E_t \setminus ( \partial^* E )_t ) ) &= 0, \label{eq: 1}\\
		\mathbf{ p } ( \nu_{ E } ( x , t ) ) &\neq 0, \label{eq: 2}\\
		\nu_{ E_t } ( x ) &= \lv \mathbf{ p } ( \nu_{ E } ( x , t ) ) \rv^{ - 1 } \mathbf{ p } ( \nu_{ E } ( x , t ) ). \label{eq: 3}
	\end{align}
	We let $ I := \{ t > 0 \mid \eqref{eq: 1} \text{ fails} \} $ and $ A_t := \{ x \in ( \partial^* E )_t \mid x \notin \partial^* E_t \text{ or } \eqref{eq: 2} \text{ and } \eqref{eq: 3} \text{ fail} \} $ for every $ t > 0 $ so that $ \lv I \rv = 0 $ and $ \mathcal{ H }^n ( A_t ) = 0 $ for all $ t \in ( 0 , \infty ) \setminus I $. Set the characteristic function $ \chi ( x , t ) := \chi_{ A_t } ( x ) $ on $ \R^{ n + 1 } \times ( 0 , \infty ) $. By the coarea formula, we then have
	\begin{equation*}
		\begin{split}
			&\int_{ \partial^* E } \chi ( x , t ) \lv \nabla^{ \partial^* E } ( \mathbf{ q } ( x , t ) ) \rv \, d \mathcal{ H }^{ n + 1 } ( x , t ) = \int_0^{ \infty } \int_{ ( \partial^* E )_t } \chi ( x , t ) \, d \mathcal{ H }^n ( x ) d t\\
			&= \int_0^{ \infty } \mathcal{ H }^n ( A_t ) \, d t = \int_I \mathcal{ H }^n ( A_t ) \, d t = 0,
		\end{split}
	\end{equation*}
	where $ \nabla^{ \partial^* E } ( \cdot ) = P_{ T_{ ( x , t ) } ( \partial^* E ) } ( \nabla ( \cdot ) ) $ and $ \mathbf{ q } $ denotes the projection of $ \R^{ n + 1 } \times \R $ onto $ \R $. Combining (1) and (2) implies that $ \lv \nabla^{ \partial^* E } ( \mathbf{ q } ( x , t ) ) \rv > 0 $ for $ \mathcal{ H }^{ n + 1 } $-almost every $ ( x , t ) \in \partial^* E \cap \{ t > 0 \} $. Hence, we have $ \chi ( x , t ) = 0 $ for $ \mathcal{ H }^{ n + 1 } $-almost every $ ( x , t ) \in \partial^* E \cap \{ t > 0 \} $. Thus, the first part of (3) is proved and we have (4). One can prove the identity $ T_x \, \lV V_t \rV = T_x \, ( \partial^* E_t ) $ by assumptions that and repeating the proof of part (1) at fixed $ t $.
	
	Finally, taking $ ( x , t ) \in \partial^* E $ as satisfying (1)-(4) of this proposition, one can calculate
	\[
	{}^t ( z , 0 ) \cdot \nu_E ( x , t ) = z \cdot \mathbf{ p } ( \nu_{ E } ( x , t ) ) = \lv \mathbf{ p } ( \nu_{ E } ( x , t ) ) \rv ( z \cdot \nu_{ E_t } ( x ) ) = 0
	\]
	for all $ z \in T_x \, ( \partial^* E_t ) $. This completes the proof of part (5).
\end{proof}

\begin{proof}[Proof of Theorem \ref{thm:L2 flow area change}]
	We fix $ \phi \in C^1_c ( \R^{ n + 1 } \times ( 0 , \infty ) ) $ arbitrarily. Then, by the Gauss--Green theorem for sets of finite perimeter, we have
	\begin{equation}
		\label{eq: S times Gauss Green}
		\int_{ E } \partial_t \phi \, d x d t = \int_{ \partial^* E } \phi \mathbf{ q } ( \nu_{ E } ) \, d \mathcal{ H }^{ n + 1 }.
	\end{equation}
	Let $ G $ be the set of points satisfying Proposition \ref{prop: basic} (1)-(5). Then, for all $ ( x , t ) \in G $, we have
	\begin{equation}
		\label{eq: T mu}
		T_{ ( x , t ) } \, \mu = ( T_x \, ( \partial^* E_t ) \times \{ 0 \} ) \oplus \mathrm{span} \begin{pmatrix}
			v ( x , t )\\
			1
		\end{pmatrix}.
	\end{equation}
	By $ v ( x , t ) \perp T_x \, \lV V_t \rV $, \eqref{eq: T mu} and Proposition \ref{prop: basic} (1) and (4), we obtain
	\begin{equation}
		\label{eq: nu S(i) calculation}
		\nu_{ E } ( x , t ) = \frac{ 1 }{ \sqrt{ 1 + \lv v ( x , t ) \rv^2 } } \begin{pmatrix}
			\nu_{ E_t } ( x )\\
			- v ( x , t ) \cdot \nu_{ E_t } ( x )
		\end{pmatrix}.
	\end{equation}
	By \eqref{eq: nu S(i) calculation}, $ v ( x , t ) \perp \Tan_x \lV V_t \rV $ and Proposition \ref{prop: basic} again, for all $ ( x , t ) \in G $, we can calculate the $ i \times ( n + 1 ) $ component of the matrix $ I_{ n + 1 } - \nu_E \otimes \nu_E $ for $ i = 1 , \ldots , n + 1 $ as
	\[
	( I_{ n + 1 } - \nu_E \otimes \nu_E )_{ i , n + 1 } ( x , t )
	=\begin{cases}
		\frac{ - \nu_{ E_t } ( x )_i ( v ( x , t ) \cdot \nu_{ E_t } ( x ) ) }{ 1 + \lv v ( x , t ) \rv^2 } &( i = 1 , \ldots , n ),\\
		\frac{ 1 }{ 1 + \lv v ( x , t ) \rv^2 } &( i = n + 1 ),
	\end{cases}
	\]
	where $ I_{ n + 1 } $ is the ($ n + 1 $)-identity matrix and $ ( \nu_E )_i $ is the $ i $-th component of $ \nu_E $. According to this calculation and the facts the $ \nabla \mathbf{ q } = \e_{ n + 1 } $ and Proposition \ref{prop: basic} (3), we obtain that the co-area factor of the projection $ \mathbf{ q } $ satisfies
	\begin{equation}
		\label{eq: coarea factor}
		\lv \nabla^{ \partial^* E } \mathbf{ q } ( \nu_{ E } ( x , t ) ) \rv = \frac{ 1 }{ \sqrt{ 1 + \lv v ( x , t ) } \rv^2 }.
	\end{equation}
	Thanks to \eqref{eq: S times Gauss Green}-\eqref{eq: coarea factor} and the coarea formula, we compute
	\begin{equation*}
		\begin{split}
			&\int_{ E } \partial_t \phi \, d x d t = - \int_G \frac{ \phi v \cdot \nu_{ E_t } }{ \sqrt{ 1 + \lv v \rv^2 } } \, d \mathcal{ H }^{ n + 1 }
			= - \int_{ \partial^* S \cap \{ t > 0 \} } \phi v \cdot \nu_{ E_t } \lv \nabla^{ \partial^* E } \mathbf{ q } ( \nu_{ E } ) \rv \, d \mathcal{ H }^{ n + 1 }\\
			&= - \int_0^{ \infty } \int_{ \partial^* E \cap \{ \mathbf{ q } = t \} } \phi v \cdot \nu_{ E_t } \, d \mathcal{ H }^n d t
			= - \int_0^{ \infty } \int_{ \R^{ n + 1 } } \phi v \cdot \nu_{ E_t } \, d \lV \nabla \chi_{ E_t } \rV d t,
		\end{split}
	\end{equation*}
	where we used $ \mathcal{ H }^{ n + 1 } ( \partial^* E \setminus G ) = 0 $. By a suitable approximation of $ \phi $ and the continuity of the $ L^2 $ norm of $ \chi_E $ (assumption \eqref{itm:continuity assumption} in Theorem \ref{thm:L2 flow area change}), for all $ 0 \leq t_1 < t_2 $, we deduce \eqref{eq:v area change formula}.
\end{proof}

\section{Proof of Theorem \ref{thm:main result}}
\label{sec: proof of main}
To prove Theorem \ref{thm:main result}, it suffices to show that the flow constructed in \cite{liu2024existence} is an $ L^2 $ flow with velocity $ v = h + u^\perp $ satisfying the hypothesis of Theorem \ref{thm:L2 flow area change}. We fix a given vector field $ u \in L^{ \infty }_{ loc } ( [ 0 , \infty ) ; L^2 ( \R^2 ) ) \cap L^2_{ loc } ( [ 0 , \infty ) ; W^{ 1 , 2 } ( \R^2 ) ) $. We let $ \Gamma_0 $ and $ E_{ 0 , 1 } , \ldots , E_{ 0 , N } $ as in Assumption \ref{assum: initial} and $ V^{ ( m ) }_t $, $ E_i^{ ( m ) } ( t ) $, $ u^{ ( m ) } $, $ V_t $ and $ E_i ( t ) $ as in Subsection \ref{subsec: review}, and let $ h^{ ( m ) } = h^{ ( m ) } ( \cdot , V_t^{ ( m ) } ) $ be the generalized mean curvature of $ V_t^{ ( m ) } $. Furthermore, we let $ d \mu := d \lV V_t \rV d t $ and $ d \mu^{ ( m ) } := d \lV V_t^{ ( m ) } \rV d t $.

When taking a limit, by using the compactness of set of finite perimeter, one obtains the following (see \cite[Proposition 5.7]{liu2024existence}).

\begin{prop}
\label{prop: S convergence}
	Let $ S^{ ( m ) } ( i ) $ be as in Theorem \ref{thm: existence for smooth u} (7) corresponding to each $ m $. Taking a subsequence if necessary, we have the following:
	\begin{enumerate}
		\item For all $ t \geq 0 $ and for each $ i = 1 , \ldots N $, $ \chi_{ E_i^{ ( m ) } ( t ) } \to \chi_{ E_i ( t ) } $ locally in $ L^1 ( \R^2 ) $ and $ E_i ( t ) $ is a set of finite perimeter.
		\item For each $ i = 1 , \ldots , N $, $ \chi_{ S^{ ( m ) } ( i ) } \to \chi_{ S ( i ) } $ locally in $ L^1 ( \R^2 \times [ 0 , \infty ) ) $ and $ S ( i ) $ is a set of finite perimeter.
	\end{enumerate}
\end{prop}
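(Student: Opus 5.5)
The plan is to obtain both statements from compactness of sets of finite perimeter (in space, resp. in space--time) combined with the uniform H\"older-in-time $L^1$ estimate of Theorem \ref{thm: existence for smooth u} (8). The a priori estimates of Proposition \ref{prop: a priori} make every constant independent of $m$. The mass bound \eqref{eq: mass a priori} and the inequality $\lV \nabla \chi_{E_i^{(m)}(t)} \rV \le \lV V^{(m)}_t \rV$ give
\[
\sup_m \sup_{0\le t\le T} \lV \nabla \chi_{E_i^{(m)}(t)} \rV(\R^2) \le \lV V_0\rV(\R^2)\exp(c^2 C(u,T)).
\]
Here one uses that $C(u^{(m)},T)$ is bounded by a constant depending on $C(u,T)$, which holds by the standard properties of mollification. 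Likewise the constant in (8) is uniform in $m$; this is how it arises from \eqref{eq: h a priori} and \eqref{eq: u a priori}, since the $L^1$ time-difference is controlled by $\int\!\int |h+u^\perp|\,d\mu$.

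For (1), I would fix a countable dense set $D\subset[0,\infty)$, e.g.\ the rationals. For each $t\in D$, the uniform perimeter bound and BV compactness give $L^1_{loc}$ convergence along a subsequence. A diagonal argument produces a single subsequence that converges for every $t\in D$, with limits $\chi_{F_i(t)}$, $t\in D$. The uniform estimate $|E_i^{(m)}(t_1)\triangle E_i^{(m)}(t_2)|\le C|t_2-t_1|^{1/2}$ passes to the limit on $D$. Hence $t\mapsto\chi_{F_i(t)}$ extends uniquely to a $C^{1/2}$ map into $L^1_{loc}$.

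Next I would show convergence at every $t$ by an $\varepsilon/3$ argument. Approximate $t$ by $s\in D$ and use the equicontinuity of $\chi_{E_i^{(m)}(\cdot)}$ in $L^1$ together with the continuity of the limit. The limit at each $t$ is again a set of finite perimeter by lower semicontinuity of perimeter. Finally this limit must be identified with the $E_i(t)$ of Theorem \ref{thm: existence for critical u}: Liu--Tonegawa define $E_i(t)$ exactly as such a limit (taking open representatives), so (1) amounts to recording that construction. Disjointness and $\sum_i \chi_{E_i(t)}=1$ a.e.\ pass to the limit as well.

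For (2), note that $\chi_{S^{(m)}(i)}(x,t)=\chi_{E_i^{(m)}(t)}(x)$. On $K\times[0,T]$, dominated convergence in $t$ applied to (1) gives $\chi_{S^{(m)}(i)}\to\chi_{S(i)}$ in $L^1(K\times[0,T])$, with no further subsequence needed. To see that $S(i)$ has locally finite perimeter, I would bound the space--time variation of $\chi_{S^{(m)}(i)}$ uniformly. The spatial part is at most $\int_0^T\lV\nabla\chi_{E_i^{(m)}(t)}\rV(K)\,dt\le T\sup_t\lV V_t^{(m)}\rV$. For the time part, \eqref{eq: BV u is smooth} gives, for $|\phi|\le1$,
\[
\Big|\int\!\!\int_{S^{(m)}(i)}\partial_t\phi\Big|\le\int_0^T\!\!\int|h^{(m)}+u^{(m)}|\,d\mu^{(m)},
\]
and the right-hand side is bounded uniformly by Cauchy--Schwarz together with \eqref{eq: mass a priori}, \eqref{eq: h a priori} and \eqref{eq: u a priori}. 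Lower semicontinuity of total variation under $L^1_{loc}$ convergence then yields $S(i)\in BV_{loc}$.

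The main obstacle is the pointwise-in-$t$ convergence in (1) for all $t$, rather than almost every $t$. It is resolved by the uniform $1/2$-H\"older estimate, and this in turn requires checking that the constant in (8) depends only on quantities controlled uniformly in $m$ by Proposition \ref{prop: a priori}. That check is the point to verify carefully.
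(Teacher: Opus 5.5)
Your proposal is correct and follows essentially the paper's route. The paper gives no argument beyond invoking compactness of sets of finite perimeter and citing \cite[Proposition 5.7]{liu2024existence}, and your argument is the standard Kim--Tonegawa/Liu--Tonegawa one: a diagonal subsequence over rational times, upgraded to all $t$ by the uniform $1/2$-H\"older estimate, then Fubini/dominated convergence plus a uniform space--time perimeter bound for $S^{(m)}(i)$. The uniformity in $m$ of the constant in Theorem \ref{thm: existence for smooth u} (8), which you flag, does hold: test \eqref{eq: BV u is smooth} with $\phi$ approximating $\chi_{E_i^{(m)}(t_2)\setminus E_i^{(m)}(t_1)}$ and use \eqref{eq: mass a priori}--\eqref{eq: u a priori}, exactly as in the proof of Proposition \ref{prop:Holder continuity of E}.
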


We prove the absolute continuity assumption \eqref{itm:measure assumption} in Theorem \ref{thm:L2 flow area change} in the following lemma.

\begin{lemma}
\label{lem: S<<V}
	For each $ i = 1 , \ldots , N $, we have $ \mathcal{ H }^2 \lfloor_{ \partial^* S ( i ) } \ll \mu $ in $ \R^2 \times ( 0 , \infty ) $. 
\end{lemma}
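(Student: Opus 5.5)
We want $\mathcal{H}^2\lfloor_{\partial^* S(i)} \ll \mu$ on $\R^2\times(0,\infty)$. Since $\lV \nabla' \chi_{S(i)} \rV = \mathcal{H}^2\lfloor_{\partial^* S(i)}$, it suffices to show: if $A\subset \R^2\times(0,\infty)$ is compact with $\mu(A)=0$, then $\lV \nabla'\chi_{S(i)}\rV(A)=0$. We argue by approximation from the smooth-forcing flows. The approach is to obtain a \emph{uniform} space-time bound
\[
\lV \nabla' \chi_{S^{(m)}(i)} \rV (\phi) \le C\,\mu^{(m)}(\phi) + (\text{small error})
\]
for nonnegative test functions $\phi$. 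We then pass to the limit using lower semicontinuity of perimeter (Proposition \ref{prop: S convergence}(2)) on the left and weak convergence $\mu^{(m)}\to\mu$ on the right.

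\textbf{Slicing the space-time perimeter.} For the smooth flows, Theorem \ref{thm: existence for smooth u}(9) holds, so $S^{(m)}(i)$ satisfies the area change formula with velocity $h^{(m)}+u^{(m)}$. The measure $\partial_t\chi_{S^{(m)}(i)}$ is then the slice measure $-(h^{(m)}+u^{(m)})\cdot\nu\,d\lV\nabla\chi_{E_i^{(m)}(t)}\rV dt$. Hence, for $\phi\ge0$,
\[
\lV\nabla'\chi_{S^{(m)}(i)}\rV(\phi)\le \int\!\!\int \phi\,\big(1+|h^{(m)}|+|u^{(m)}|\big)\,d\lV\nabla\chi_{E^{(m)}_i(t)}\rV dt ,
\]
and since $\lV\nabla\chi_{E^{(m)}_i(t)}\rV\le\lV V^{(m)}_t\rV$, this is at most $\int\phi(1+|h^{(m)}|+|u^{(m)}|)\,d\mu^{(m)}$.

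\textbf{Controlling the velocity terms.} We apply Cauchy--Schwarz with a parameter. For $\phi$ supported near $A$, bounded by $1$, and $\lambda>0$,
\[
\int\phi|h^{(m)}|\,d\mu^{(m)}\le \lambda^{-1}\int|h^{(m)}|^2\,d\mu^{(m)}+\lambda\,\mu^{(m)}(\phi),
\]
and similarly for $u^{(m)}$. The uniform bounds \eqref{eq: h a priori} and \eqref{eq: u a priori} control the first term. This only gives $\lV\nabla'\chi_S\rV(A)\le C/\lambda$, which is not small. To fix this, we localize the $L^2$ mass: the measures $|h^{(m)}|^2\mu^{(m)}$ and $|u^{(m)}|^2\mu^{(m)}$ have uniformly bounded mass, so a subsequence converges weakly to a Radon measure $\beta$. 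Using the limit of $\int\phi\,|h^{(m)}|\,d\mu^{(m)}\le (\int\phi\, d\mu^{(m)})^{1/2}(\int\phi\,|h^{(m)}|^2d\mu^{(m)})^{1/2}$ (with $\phi\le1$), we obtain
\[
\lV\nabla'\chi_{S(i)}\rV(\phi)\le \mu(\phi)+ C\,\mu(\phi)^{1/2}\,\beta(\phi)^{1/2}.
\]
Choosing $\phi_k\downarrow\chi_A$ gives $\mu(\phi_k)\to\mu(A)=0$ while $\beta(\phi_k)$ remains bounded. Therefore $\lV\nabla'\chi_{S(i)}\rV(A)=0$, and the statement follows by inner regularity.

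\textbf{Main obstacle.} The delicate step is the slicing identity for $\partial_t\chi_{S^{(m)}(i)}$. It uses the smooth-case area change formula, whose velocity term involves $u$ rather than $u^\perp$, together with $\lV\nabla\chi_{E}\rV\le\lV V\rV$. We must ensure that $\lV\nabla'\chi\rV\le\lV\nabla\chi\rV dt+|\partial_t\chi|$ holds as measures; this is standard for the total variation of a vector measure. Care is also needed with $u$, since $u^{(m)}\to u$ only in the critical norms. The estimate \eqref{eq: u a priori} is uniform in $m$, so only the weak limit $\beta$ is needed, never pointwise convergence of $u^{(m)}$.
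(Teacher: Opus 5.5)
Your argument is correct. It reaches the conclusion by a somewhat different route than the paper.

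\textbf{The paper's route.} The paper passes to the limit in the smooth area change formula and identifies the limit time derivative explicitly.
\begin{enumerate}
\item It extracts a weak limit $\alpha_i$ of $d\lV\nabla\chi_{E_i^{(m)}(t)}\rV\,dt$. Since $\alpha_i\le\mu$, one can write $\alpha_i=f_i\mu$ with $|f_i|\le 1$.
\item It applies Hutchinson's measure-function pair compactness (Theorem \ref{thm: measure function cptness}) to the pairs $((h^{(m)}+u^{(m)})\cdot\nu,\ d\lV\nabla\chi_{E_i^{(m)}(t)}\rV dt)$. This produces $v_i\in L^2_{loc}(\alpha_i)$ and the representation $\partial_t\chi_{S(i)}=-v_if_i\,\mu$.
\item The spatial part is read off from the slices of the limit as $\nabla\chi_{E_i(t)}\,dt$, using $\lV\nabla\chi_{E_i(t)}\rV\le\lV V_t\rV$ for the limit flow. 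Absolute continuity then follows component by component.
\end{enumerate}

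\textbf{Your route.} You never identify $\partial_t\chi_{S(i)}$ at all.
\begin{enumerate}
\item You bound the full total variation of the approximants by $(1+|h^{(m)}|+|u^{(m)}|)\,\mu^{(m)}$.
\item You pass to the limit by lower semicontinuity of the total variation under the $L^1_{loc}$ convergence of Proposition \ref{prop: S convergence}(2).
\item You control the velocity contributions by weighted Cauchy--Schwarz against the weak limit $\beta$ of $(|h^{(m)}|^2+|u^{(m)}|^2)\,\mu^{(m)}$.
\end{enumerate}
This is essentially the mechanism inside Hutchinson's theorem, inlined and applied directly to the total variation rather than to the signed time derivative.

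\textbf{What each route buys.} Your route is more elementary. It needs no Radon--Nikod\'{y}m density $f_i$ and no appeal to Theorem \ref{thm: measure function cptness}. It uses the inequality $\lV\nabla\chi_{E}\rV\le\lV V\rV$ only for the smooth-forcing flows, and it avoids slicing the limit set $S(i)$. It also gives the quantitative bound
\[
\lV\nabla'\chi_{S(i)}\rV(\phi)\le\mu(\phi)+C\,\mu(\phi)^{1/2}\beta(\phi)^{1/2}.
\]
The paper's route gives more structure: an explicit limit normal velocity $v_i\in L^2_{loc}(\alpha_i)$ representing $\partial_t\chi_{S(i)}$. That extra structure is not needed for the lemma itself, since the velocity is identified afterwards through Theorem \ref{thm:L2 flow area change}.

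Your preliminary ``Cauchy--Schwarz with a parameter'' attempt and the restriction $\phi\le 1$ are unnecessary. The weighted inequality $\int\phi|h|\,d\mu\le(\int\phi\,d\mu)^{1/2}(\int\phi|h|^2\,d\mu)^{1/2}$ holds for every $\phi\ge0$.
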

\begin{proof}
	Fix $ i \in \{ 1 , \ldots , N \} $. For each $ m $, one has
	\[
		\int_{ S^{ ( m ) } ( i ) } \partial_t \phi \, d x d t = - \int_0^{ \infty } \int_{ \R^2 } ( h^{ ( m ) } + u^{ ( m ) } ) \cdot \nu_{ E_i^{ ( m ) } ( t ) } \, d \lV \nabla \chi_{ E_i^{ ( m ) } ( t ) } \rV d t
	\]
	for all $ \phi \in C^1_c ( \R^2 \times ( 0 , \infty ) ) $ by \eqref{eq: BV u is smooth}. Since $ V_t^{ ( m ) } $ satisfies the a priori estimate \eqref{eq: mass a priori} and Theorem \ref{thm: existence for smooth u} (6), the family of space-time Radon measures $ \{ d \lV \nabla \chi_{ E_i^{ ( m ) } ( t ) } \rV d t \}_{ m \in \N } $ is uniformly local bounded. Thus, by passing to a further subsequence if necessary, there exists a Radon measure $ d \alpha_i $ in $ \R^2 \times ( 0 , \infty ) $ such that $ d \lV \nabla \chi_{ E_i^{ ( m ) } ( t ) } \rV d t \rightharpoonup d \alpha_i $ as $ m \to \infty $. By Theorem \ref{thm: existence for smooth u} (6) again, $ \alpha_i $ satisfies, for all $ \phi \in C_c ( \R^2 \times ( 0 , \infty ) ) $ with $ \phi \geq 0 $,
	\[
		\alpha_i ( \phi ) = \lim_{ m \to \infty } \int_0^{ \infty } \int_{ \R^2 } \phi \, d \lV \nabla \chi_{ E_i^{ ( m ) } ( t ) } \rV d t \leq \lim_{ m \to \infty } \mu^{ ( m ) } ( \phi ) = \mu ( \phi ).
	\]
	In particular, $ \alpha_i \ll \mu $. We let the Radon--Nykod\'{y}m derivative of $ \alpha_i $ with respect to $ d \lV V_t \rV d t $ be $ f_i := d \alpha_i / d \mu $. Note that $ \lv f_i \rv \leq 1 $ follows from the above inequality $ \mu $-almost everywhere.
	
	Next, let us consider the integrand of the following
	\[
		\int_0^{ \infty } \int_{ \R^2 } ( h^{ ( m ) } + u^{ ( m ) } ) \cdot \nu_{ E_i^{ ( m ) } ( t ) } \, d \lV \nabla \chi_{ E_i^{ ( m ) } ( t ) } \rV d t.
	\]
	By Theorem \ref{thm: existence for smooth u} (6), \eqref{eq: h a priori} and \eqref{eq: u a priori}, we obtain the uniform bound
	\[
		\int_0^{ \infty } \int_{ \R^2 } \lv ( h^{ ( m ) } + u^{ ( m ) } ) \cdot \nu_i^{ ( m ) } \rv^2 \, d \lV \nabla \chi_{ E_i^{ ( m ) } ( t ) } \rV d t \leq 2 \int_0^{ \infty } \int_{ \R^2 } ( \lv h^{ ( m ) } \rv^2 + \lv u^{ ( m ) } \rv^2 ) \, d \lV V_t^{ ( m ) } \rV d t < \infty.
	\]
	By applying Theorem \ref{thm: measure function cptness} to the pairs $ ( (  h^{ ( m ) } + u^{ ( m ) } ) \cdot \nu_i^{ ( m ) } , d \lV \nabla \chi_{ E_i^{ ( m ) } ( t ) } \rV d t ) $, taking a subsequence if necessary, there exists a function $ v_i \in L^2_{ loc } ( \alpha_i ) $ such that, for all $ \phi \in C_c ( \R^2 \times ( 0 , \infty ) ) $, 
	\[
		\lim_{ m \to \infty } \int_0^{ \infty } \int_{ \R^2 } \phi ( h^{ ( m ) } + u^{ ( m ) } ) \cdot \nu_{ E_i^{ ( m ) } ( t ) } \, d \lV \nabla \chi_{ E_i^{ ( m ) } ( t ) } \rV d t = \int_0^{ \infty } \int_{ \R^2 } \phi v_i \, d \alpha_i.
	\]
	Combining with the above arguments and Proposition \ref{prop: S convergence}, for all $ \phi \in C^1_c ( \R^2 \times ( 0 , \infty ) ) $, we obtain
	\begin{equation*}
		\begin{split}
			\int_{ S ( i ) } \partial_t \phi \, d x d t
			&= \lim_{ m \to \infty } \int_{ S^{ ( m ) } ( i ) } \partial_t \phi \, d x d t
			= \lim_{ m \to \infty } - \int_0^{ \infty } \int_{ \R^2 } \phi ( h^{ ( m ) } + u^{ ( m ) } ) \cdot \nu_i^{ ( m ) } \, d \lV \nabla \chi_{ E_i^{ ( m ) } ( t ) } \rV d t\\
			&= - \int_0^{ \infty } \int_{ \R^2 } \phi v_i \, d \alpha_i
			= - \int_0^{ \infty } \int_{ \R^2 } \phi v_i f_i \, d \lV V_t \rV d t.
		\end{split}
	\end{equation*}
	It follows from this that $ d \nabla' \chi_{ S ( i ) } = d \nabla \chi_{ E_i ( t ) } d t , v_i f_i \, d \lV V_t \rV d t $ in the sense of vactorial Radon measures, where $ d \nabla' \chi_{ S ( i ) } $ is the space-time distributional derivative of $ \chi_{ S ( i ) } $. By Theorem \ref{thm: existence for smooth u} (6) again and noting that $ \lv f_i \rv \leq 1 $ $ \mu $-almost everywhere, we have $ \mathcal{ H }^2 \lfloor_{ \partial^* S ( i ) } \ll \mu $.
\end{proof}

By Proposition \ref{prop: a priori}, Lemma \ref{lem: S<<V}, Theorem \ref{thm: existence for critical u} (8) and Proposition \ref{prop: S convergence}, the flow satisfies assumptions (1)--(3) of Theorem \ref{thm:L2 flow area change}. It thus remains only to prove that the flow is an $ L^2 $ flow with velocity $ v = h + u^\perp $. This fact follows from a slight modification of the argument in \cite[Theorem 4.3]{StuvardTonegawa+2022}; nevertheless, for completeness, we provide the proof below.

\begin{prop}
\label{prop:L2}
	Let $ \{ V_t \}_{ t \geq 0 } $ be as in Theorem 2.4. Then, for any $ 0 < T < \infty $, $ \{ V_t \}_{ t \in [ 0 , T ) } $ is an $ L^2 $ flow with velocity $ v = h + u^\perp $ in $ \R^2 \times [ 0 , T ) $ defined as in Definition \ref{def:L2 flow}. 
\end{prop}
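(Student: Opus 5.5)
We verify conditions (a), (b), (c'1) and (c'2) of Definition \ref{def:L2 flow} in turn, taking $v := h + u^{\perp}$. The core of the argument is to test the Brakke inequality \eqref{eq: Brakke} with both $\phi$ and $-\phi$, after splitting off a constant to restore nonnegativity.

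Condition (a) follows directly from Theorem \ref{thm: existence for critical u} together with Theorem \ref{thm: existence for smooth u} (2): for a.e.\ $t$ the varifold $V_t$ is integral and has generalized mean curvature $h(\cdot,V_t)$. Condition (b) holds because $t \mapsto \lV V_t\rV(\phi)$ is measurable; this comes from the Brakke inequality, which makes $\lV V_t\rV(\phi) - Ct$ monotone for $\phi \geq 0$. Moreover, \eqref{eq: mass a priori} gives $\sup_{t\le T}\lV V_t\rV(\R^2)<\infty$, so $d\mu = d\lV V_t\rV\,dt$ is a Radon measure on $\R^2\times[0,T)$.

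For $v \in L^2_{loc}(\mu)$, we use $|v|^2 \le 2|h|^2 + 2|u|^2$ together with the bounds \eqref{eq: h a priori} and \eqref{eq: u a priori]}, both of which hold for the limit flow by Theorem \ref{thm: existence for critical u} (2). For (c'1), Brakke's perpendicularity theorem gives $h \perp \Tan_x\lV V_t\rV$ for $\lV V_t\rV$-a.e.\ $x$ and a.e.\ $t$. The component $u^{\perp}$ is normal by definition, and it is well defined $\mu$-a.e.\ because $V_t$ is integral for a.e.\ $t$.

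The main step is (c'2). Fix $\phi \in C^1_c(\R^2\times[0,T))$ with $\spt\phi \subset B_R\times[0,T)$. Choose $\psi \in C^2_c(\R^2)$ with $0 \le \psi \le 1$ and $\psi \equiv 1$ on $B_R$, and set $\phi_\pm := \sup|\phi|\,\psi \pm \phi \geq 0$. Apply \eqref{eq: Brakke} to $\phi_\pm$ with $t_1 = 0$ and $t_2 = T$; the left-hand side is bounded by $2\sup|\phi|\,\sup_t \lV V_t\rV(\R^2)$. On the right-hand side we have
\[
\int_0^T\!\!\int (\nabla\phi_\pm - \phi_\pm h)\cdot(h+u^\perp) + \partial_t\phi_\pm \, d\lV V_t\rV dt .
\]
The part of this that is linear in $\phi$ is $\pm\int\!\!\int (\nabla\phi\cdot v + \partial_t\phi)\,d\mu$. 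The remaining terms are of three kinds:
\begin{itemize}
\item $\mp\int\!\!\int \phi\, h\cdot(h+u^\perp)\,d\mu$, bounded by $\sup|\phi|\int\!\!\int (|h|^2+|h||u|)\,d\mu$;
\item $\sup|\phi|\int\!\!\int (\nabla\psi-\psi h)\cdot v\,d\mu$, whose absolute value is bounded by $\sup|\phi|\,C(\psi)\int\!\!\int(1+|h|^2+|u|^2)\,d\mu$, after using Young's inequality on the mixed products;
\item the contribution of $\partial_t\psi$, which is zero since $\psi$ does not depend on $t$.
\end{itemize}
All of these are finite by the a priori estimates. Taking both signs, we obtain
\[
\Big|\int\!\!\int \partial_t\phi + \nabla\phi\cdot v\,d\mu\Big| \le C\sup|\phi|,
\]
with $C$ depending only on $\psi$ (hence on $\spt\phi$), the mass bound, and the $L^2$ bounds on $h$ and $u$. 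This is exactly \eqref{eq:L2 flow}.

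The expected obstacle is applying \eqref{eq: Brakke} when $u$ is only critical. The integral $\int\phi\,h\cdot u^\perp\,d\mu$ is meaningful only thanks to \eqref{eq: u a priori}, and we must make sure the Brakke inequality truly holds for the limit flow with this $u$, not merely for the approximating $u^{(m)}$. We take this from Theorem \ref{thm: existence for critical u} (1), which asserts property (4) for the limit flow. If only the approximating flows were available, we would instead derive the bound for each $m$ with constants uniform in $m$, and then pass to the limit. In that limit, $\nabla\phi\cdot(h^{(m)}+u^{(m)\perp})\,d\mu^{(m)}$ converges to $\nabla\phi\cdot v\,d\mu$ by the measure-function pair compactness (Theorem \ref{thm: measure function cptness}), together with the convergence of the varifolds that identifies the limit of $h^{(m)}$ as $h$.
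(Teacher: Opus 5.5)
Your proof is correct, but you get the two-sided bound in (c'2) by a different device than the paper.

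The paper's route is as follows. It introduces the functional $L\psi := \iint \partial_t\psi + \nabla\psi\cdot(h+u^\perp) - \psi\, h\cdot u^\perp\,d\mu$. From \eqref{eq: Brakke} it reads off $L\psi \geq \iint \psi\lv h\rv^2\,d\mu \geq 0$ for $\psi \geq 0$, so $L$ is monotone. For $\phi\geq 0$ supported in $\R^2\times(0,T)$, it then dominates $\phi/\|\phi\|_{C^0}$ by a cutoff $\Phi_\delta(t)$ depending only on time. $L\Phi_\delta$ is controlled because $\partial_t\Phi_\delta = O(1/\delta)$ on an interval of length $\delta$ and the total mass $\sup_t\lV V_t\rV(\R^2)$ is finite. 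General $\phi$ is handled by splitting $\phi = \max(\phi,0)-\max(-\phi,0)$.

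You instead make $\pm\phi$ nonnegative by adding $\sup\lv\phi\rv\,\psi$ for a spatial cutoff $\psi$. You apply \eqref{eq: Brakke} on $[0,T]$ to both, and absorb the endpoint terms and the extra $\psi$-terms using the mass bound and the $L^2$ bounds on $h$ and $u$.

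Your route has three advantages:
\begin{itemize}
\item It never leaves the class of $C^1$ test functions; the paper's $\max(\pm\phi,0)$ are only Lipschitz and would need smoothing.
\item It covers test functions that do not vanish at $t=0$, which the paper's $\Phi_\delta$ excludes.
\item It only uses mass bounds on $\spt\psi$, so it would also apply to flows with merely locally finite mass.
\end{itemize}
The price is that you must estimate the sign-indefinite term $\iint \phi\lv h\rv^2\,d\mu$ rather than discard it. Given \eqref{eq: h a priori}, this is harmless.

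Two minor points:
\begin{itemize}
\item To be an admissible test function, $\psi$ should be multiplied by a time cutoff equal to $1$ on $[0,T]$. This changes nothing in the inequality on $[0,T]$.
\item In (b), the monotone quantity is not $\lV V_t\rV(\phi)-Ct$, since $u$ is unbounded. It is $\lV V_t\rV(\phi)-\int_0^t g\,ds$, where $g\in L^1_{loc}$ involves $\lv u\rv^2$ and $\lv u\rv$ (coming from Young's inequality on the $u^\perp$ terms). Measurability of $t\mapsto\lV V_t\rV(\phi)$ still follows, so the conclusion stands.
\end{itemize}
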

	\begin{proof}
	We confirm the requirements of Definition \ref{def:L2 flow} Since $ \{ V_t \}_{ t \geq 0 } $ is as in Theorem \ref{thm: existence for smooth u}, the conditions (a), (b) and (c'1) are satisfied. To complete the proof, we are only left with checking that \eqref{eq:L2 flow} holds.

	Let $ 0 < T < \infty $, $ \phi \in C^1_c ( \R^2 \times ( 0 , T ) ) $ be fixed. First, we consider that a test function is $ \phi \geq 0 $ and $ \phi \neq 0 $. From \eqref{eq: Brakke}, we deduce
	\begin{equation*}
		\begin{split}
			\int_0^T \int_{ \R^2 } \partial_t \psi + \nabla \psi \cdot \left( h + u^{ \perp } \right) - \psi\, h \cdot u^{ \perp } \, d \lV V_t \rV d t
			\geq \int_0^T \int_{ \R^2 } \psi \lv h \rv^2 \, d \lV V_t \rV d t \geq 0
		\end{split}
	\end{equation*}
	for all $ \psi \in C^1_c ( \R^2 \times [ 0 , T ] ) $ with $ \psi \geq 0 $. We here define the linear functional $ L $ on $ C^1_c ( \R^2 \times [ 0 , T ] ; [ 0 , \infty ) ) $ by
	\[
		L \psi := \int_0^T \int_{ \R^2 } \partial_t \psi + \nabla \psi \cdot \left( h + u^{ \perp } \right) - \psi\,h \cdot u^{ \perp } \, d \lV V_t \rV d t,
	\]
	then $ L $ is monotone, that is, $ L\,\psi_1 \leq L\,\psi_2 $ whenever $ \psi_1 \leq \psi_2 $ everywhere. For every small number $ \varepsilon > 0 $, we let take a cutoff function $ \Phi_{ \delta } ( t ) $ defined by the following conditions:
	\begin{description}
		\item[(i)] $ \Phi_{ \delta } $ is a $ C^1_c $ function on $ t \in \R^+ $ and $ 0 \leq \Phi_{ \delta } \leq 1 $,
		\item[(ii)] $ \Phi_{ \delta } \equiv 1 $ on $ ( 2 \delta , T ] $ and $ \Phi_{ \delta } \equiv 0 $ on $ [ 0 , \delta ] $,
		\item[(iii)] there exists a constant $ C > 0 $ such that  $ \sup_{ \R^+ } \lv \partial_t \Phi_{ \delta } \rv \leq C / \delta $.
	\end{description}
	We take a sufficiently small number $ \delta > 0 $ so that $ \mathrm{ supp }\,\phi \subset \R^2 \times [ 2 \varepsilon , T ] $. Then the definition of $ \Phi_{ \delta } $ implies
	\begin{equation}
		0 \leq \frac{ \phi }{ \| \phi \|_{ C^0 } } \leq \Phi_{ \delta }.
		\label{monoto}
	\end{equation}
	On the other hand, we compute
	\begin{align*}
		&\lv L \Phi_{ \delta } \rv
		\leq \int_0^T \int_{ \R^2 } \lv \partial_t \Phi_{ \delta } \rv + \Phi_{ \delta } \lv h \rv \lv u \rv \, d \lV V_t \rV d t\\
		&\leq \int_{ \varepsilon }^{ 2 \varepsilon } \int_{ \R^2 } \frac{ C }{ \varepsilon } \, d \lV V_t \rV d t + \| h \|_{ L^2 ( \mu , \R^2 \times [ 0 , T ] ) } + \| u \|_{ L^2 ( \mu , \R^2 \times [ 0 , T ] ) }\\
		&\leq C \sup_{ 0 \leq t \leq T } \lV V_t \rV ( \R^2 ) + \| h \|_{ L^2 ( \mu , \R^2 \times [ 0 , T ] ) } + \| u \|_{ L^2 ( \mu , \R^2 \times [ 0 , T ] ) },
	\end{align*}
	where we use the H\"{o}lder inequality. This inequality, \eqref{monoto} and monotonicity of $ L $ imply that
	\begin{align*}
		&\frac{ 1 }{ \| \phi \|_{ C^0 } } \left| \int_0^T \int_{ \R^2 } \partial_t \phi + \nabla \phi \cdot \left( h + u^{ \perp } \right) d \lV V_t \rV d t\,\right|\\
		&\leq \frac{ | L \phi\,| }{ \| \phi \|_{ C^0 } } + \frac{ 1 }{ \| \phi \|_{ C^0 } } \left| \int_0^T \int_{ \R^2 } \phi\,h \cdot u^{ \perp } \, d \lV V_t \rV d t\,\right|\\
		&\leq | L\,\Phi_{ \delta }\,| + \| h \|_{ L^2 ( \mu , \R^2 \times [ 0 , T ] ) } + \| u \|_{ L^2 ( \mu , \R^2 \times [ 0 , T ] ) }
		\leq C ( \mu , \R^2, T ),
	\end{align*}
	where we used the H\"{o}lder inequality again and $ C ( \mu , T ) $ depends only on $ \mu $ and $ T $. Thus we obtain \eqref{eq:L2 flow} for any positive test functions. If the test function $ \phi $ is general case, \eqref{eq:L2 flow} is proved by splitting $ \phi = \phi_+ - \phi_- $, where $ \phi_+ = \max( \phi , 0 ) $ and $ \phi_- = \max ( - \phi , 0 ) $, and arguing the same as above for each $ \phi_+ $ and $ \phi_- $, respectively.
\end{proof}

\section{Compactness for generalized BV flows}
\label{sec:compactness}
Although the compactness theorem for Brakke flows is already known, no corresponding result had been established for generalized BV flows, which also keeps track of the grains. In this section, we discuss a compactness property for generalized BV flows by combining Theorem \ref{thm:L2 flow area change} with the compactness for sets of finite perimeter. The proof is essentially contained in \cite{StuvardTonegawa+2022,liu2024existence}. Nevertheless, we provide an explicit outline of the argument here.

We first introduce the definition of the Brakke flow and the generalized BV flow. The Brakke flow, first introduced by Brakke \cite{brakke1978motion}, is a family of varifolds satisfying a localized surface energy dissipation as a weak notion of solution to MCF. In the co-dimension $ 1 $ case, Stuvard--Tonegawa \cite{StuvardTonegawa+2022} observed that considering a Brakke flow satisfying \eqref{eq: BV_intro} provides essential advantage over the Brakke flow. This led them to introduce the generalized BV flow. 

\begin{definition}[Brakke flow]
	\label{def:Brakke}
	A family of varifolds $ \{ V_t \}_{ t \geq 0 } \subset \V_n ( \R^{ n + 1 } ) $ is \textit{an} $ n $-\textit{dimensional Brakke flow} if the following three conditions are satisfied:
	\begin{description}
		\item[\textup{(1)}] For almost every $ t \in [ 0 , \infty ) $, $ V_t \in \IV_n ( \R^{ n + 1 } ) $ and 
		$ \delta V_t $ is locally bounded and absolutely continuous with respect to $ \lV V_t \rV $ (thus the generalized mean curvature exists for almost every $ t $, denoted by $ h $).
		\item[\textup{(2)}] For all $ T > 0 $ and all compact set $ K \subset \R^{ n + 1 } $
		\[
		\sup_{ t \in [ 0 , T ] } \lV V_t \rV ( K ) < \infty, \quad \int_0^T \int_K \lv h \rv^2 \, d \lV V_t \rV d t < \infty.
		\]
		\item[\textup{(3)}] For all $ 0 \leq t_1 < t_2 < \infty $ and all test function $ \phi \in C^1_c ( \R^{ n + 1 } \times [ 0 , \infty ) ; [ 0 , \infty ) )$,
		\begin{equation}
			\label{eq:varifold Brakke ineq}
			\begin{split}
				&\lV V_{ t_2 } \rV ( \phi ( \cdot , t_2 ) ) - \lV V_{ t_1 } \rV ( \phi ( \cdot , t_1 ) )\\
				&\leq \int_{ t_1 }^{ t_2 } \int_{ \R^{ n + 1 } } \Big( \nabla \phi ( x , t ) - \phi ( x , t )\,h ( x , t ) \Big) \cdot h ( x , t ) + \partial_t \phi ( x , t ) \, d \lV V_t \rV ( x ) dt.
			\end{split}
		\end{equation}
	\end{description}
\end{definition}

\begin{definition}[Generalized BV flow]
	\label{def:genaralized BV}
	Let $ \{ V_t \}_{ t \geq 0 } $ and $ \{ E_t \}_{ t \geq 0 } $ be families of varifolds and sets of finite perimeter, respectively. The pair $ ( \{ V_t \}_{ t \geq 0 } , \{ E_t \}_{ t \geq 0 } ) $ is \textit{a generalized BV flow} if all of the following hold:
	\begin{description}
		\item[\quad\textrm{(i)}] $ \{ V_t\}_{ t \geq 0 } $ is an $ n $-dimensional Brakke flow.
		\item[\quad\textrm{(ii)}] For all $ t \geq 0 $, $ \lV \nabla \chi_{ E_t } \rV \leq \lV V_t \rV $.
		\item[\quad\textrm{(iii)}] For all $ 0 \leq t_1 < t_2 < \infty $ and all test function $ \phi \in C^1_c ( \R^{ n + 1 } \times [ 0 , \infty ) ) $,
		\begin{equation}
			\label{eq:generalized BV}
			\begin{split}
				\int_{ E_t } \phi \, d x \bigg\rv^{ t_2 }_{ t = t_1 } = \int_{ t_1 }^{ t_2 } \int_{ E_t } \partial_t \phi \, d x d t
				+ \int_{ t_1 }^{ t_2 } \int_{ \partial^* E_t } \phi ( h \cdot \nu_{ E_t } ) \, d \mathcal{ H }^n d t,
			\end{split}
		\end{equation}
		where $ h $ is the generalized mean curvature of $ V_t $.
	\end{description}
\end{definition}

The compactness statement for generalized BV flows is as follows:

\begin{theorem}
	\label{thm:compactness}
	Let $ ( \{ V_t^i \}_{ t \geq 0 } , \{ E_t^i \}_{ t \geq 0 } ) $ be a family of generalized BV flows as in Definition \ref{def:genaralized BV}. Additionally, we assume that, for all $ 0 < T < \infty $ and all compact set $ K \subset \R^{ n + 1 } $,
	\begin{equation}
		\label{eq:compactness assumption 1}
		\sup_i \sup_{ t \in [ 0 , T ] } \lV V_t^i \rV ( K ) \leq C ( T , K ).
	\end{equation}
	Then, there exist a subsequence $ \{ i_j \} \subset \{ i \} $, a family of varifolds and sets of finite perimeter $ \{ V_t \}_{ t \geq 0 } $ and $ \{ E_t \}_{ t \geq 0 } $ such that the pair $ ( \{ V_t \}_{ t \geq 0 } , \{ E_t \}_{ t \geq 0 } ) $ is a generalized BV flow and, for all $ t \geq 0 $,
	\[
	\lV V_t^{ i_j } \rV \to \lV V_t \rV \textup{ as Radon measures and } E^{ i_j } \to E \textup{ in $ L^1_{loc} ( \R^{ n + 1 } \times [ 0 , \infty ) ) $,}
	\]
	where $ E^i = \{ ( x , t ) \in \R^{ n + 1 } \times [ 0 , \infty ) \mid x \in E^i_t \} $ and $ E = \{ ( x , t ) \in \R^{ n + 1 } \times [ 0 , \infty ) \mid x \in E_t \} $. Moreover, for every $ t \in [ 0 , \infty ) $, there exists a subsequence $ \{ i'_j \} \subset \{ i_j \} $ which may depend on $ t $ such that
	\[
	V^{ i'_j } \to V_t \textup{ as varifolds and } E^{ i'_j }_t \to E_t \textup{ in $ L^1_{ loc } ( \R^{ n + 1 } ) $.}
	\]
\end{theorem}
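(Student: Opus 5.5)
Following the scheme in Section~\ref{sec: proof of main}, I will obtain the limit Brakke flow from the standard Brakke compactness theorem, obtain the limit sets from BV compactness, and then recover the area change formula \eqref{eq:generalized BV} by checking the hypotheses of Theorem~\ref{thm:L2 flow area change}.

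\textbf{Step 1 (Brakke limit).} The mass bound \eqref{eq:compactness assumption 1} together with \eqref{eq:varifold Brakke ineq} gives the usual uniform local bound $\int_0^T\int_K \lv h^i\rv^2\,d\lV V^i_t\rV dt\le C(T,K)$. This follows by testing with a cutoff $\phi$ and using $\lv\nabla\phi\rv^2/\phi\le C$. Brakke's compactness theorem then yields a subsequence and a Brakke flow $\{V_t\}$ with $\lV V^{i_j}_t\rV\to\lV V_t\rV$ for all $t$. In addition, $d\mu^{i_j}\rightharpoonup d\mu := d\lV V_t\rV dt$, and for each $t$ there is a further varifold-convergent subsequence, as in \cite{liu2024existence}.

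\textbf{Step 2 (sets).} Apply \eqref{eq:generalized BV} to $E^i$ with $\phi$ cut off in time. This gives uniform local bounds on $\lV\nabla'\chi_{E^i}\rV$, since the time derivative is controlled by $\int\lv h^i\rv\,d\lV V^i_t\rV dt$. It also gives the uniform $C^{1/2}$ estimate $\lv (E^i_{t_1}\triangle E^i_{t_2})\cap K\rv\le C(t_2-t_1)^{1/2}$, by Cauchy--Schwarz and a standard choice of test function. Compactness in $BV$ and an Arzel\`a--Ascoli argument in $L^1_{loc}$ then give $E^{i_j}\to E$ in $L^1_{loc}$ and $E^{i_j}_t\to E_t$ for every $t$. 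Lower semicontinuity gives $\lV\nabla\chi_{E_t}\rV\le\lV V_t\rV$, and the limit satisfies assumption~\eqref{itm:continuity assumption}.

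\textbf{Step 3 (absolute continuity and density).} For assumption~\eqref{itm:measure assumption} I will repeat the proof of Lemma~\ref{lem: S<<V} verbatim, with $h^{i}\cdot\nu$ in place of $(h^{(m)}+u^{(m)})\cdot\nu$:
\begin{itemize}
\item $d\lV\nabla\chi_{E^i_t}\rV dt\rightharpoonup\alpha\le\mu$;
\item the integrand $h^i\cdot\nu$ converges weakly in $L^2$ with respect to these measures, by Theorem~\ref{thm: measure function cptness};
\item hence both $\partial_t\chi_E$ and $\nabla\chi_E$ are absolutely continuous with respect to $\mu$.
\end{itemize}
For assumption~\eqref{itm:density assumption}, Brakke flows satisfy Huisken's monotonicity formula. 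The upper density $\Theta^{*n}(x,\lV V_t\rV)$ is therefore finite for a.e.\ $t$ and $\lV V_t\rV$-a.e.\ $x$; this is also implied by integrality of $V_t$ for a.e.\ $t$. That $\{V_t\}$ is an $L^2$ flow with $v=h$ follows exactly as in Proposition~\ref{prop:L2} with $u=0$. Theorem~\ref{thm:L2 flow area change} then gives \eqref{eq:generalized BV} with $v=h$.

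\textbf{Main obstacle.} I expect the hardest point to be Step~3. Weak limits of the products $h^i\cdot\nu_{E^i_t}$ must be shown to live on the limit measure and to produce a genuine $\mu$-density for $\nabla'\chi_E$. One also needs the a.e.-$t$ identity between the slice $(\partial^*E)_t$ and $\partial^*E_t$, used in Lemma~\ref{lem: V<<S}. The approach is to handle these exactly as in Lemma~\ref{lem: S<<V}. The identification of the velocity as $h$ is not done by passing to the limit in the products. Instead it is left to Theorem~\ref{thm:L2 flow area change}, which only requires the structural hypotheses.
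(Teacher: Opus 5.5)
Your proposal is correct and follows essentially the same route as the paper. Both use Brakke compactness for the varifolds, then BV compactness with the uniform $C^{1/2}$ estimate for the sets, and then verify the hypotheses of Theorem~\ref{thm:L2 flow area change} via the arguments of Lemma~\ref{lem: S<<V} and Proposition~\ref{prop:L2}. The one small deviation is your Arzel\`a--Ascoli step, which gives a single subsequence with $E^{i_j}_t\to E_t$ for every $t$; this is slightly cleaner than the paper's $t$-dependent subsequences followed by redefining the slices of $E$.
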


\begin{remark}
	Brakke flow theory yields the a priori curvature estimate
	\begin{equation}
		\label{eq:compactness assumption 2}
		\int_0^T \int_K \phi \lv h \rv^2 \, d \lV V_t \rV d t \leq \lV V_0 \rV ( \phi ) + \int_0^T \int \frac{ \lv \nabla \phi \rv^2 }{ \phi } \, d \lV V_t \rV d t
	\end{equation}
	for all $ \phi \in C^2_c ( \R^{ n + 1 } ; [ 0 , \infty ) ) $. See \cite[Lemma 3.2]{tonegawa2019brakke} for details. Consequently, in the compactness argument, the curvature term is uniformly controlled independently of the index $ i $, provided the mass of $ V_t^i $ is uniformly bounded.
\end{remark}

Let $ ( \{ V_t^i \}_{ t \geq 0 } , \{ E_t^i \}_{ t \geq 0 } ) $ satisfy the hypotheses of Theorem \ref{thm:compactness}. Applying the compactness theorem for Brakke flows (see \cite[Theorem 3.7]{tonegawa2019brakke}) to $ \{ V_t^i \}_{ t \geq 0 } $, we obtain a limit Brakke flow $ \{ V_t \}_{ t \geq 0 } $ that Theorem \ref{thm:compactness} requires. The main issue that remain are therefore the convergence of $ \{ E^i_t \}_{ t \geq 0 } $ and whether \eqref{eq:generalized BV} is preserved in the limit. We first use the compactness theorem for sets of finite perimeter to identify a limit of $ \{ E_t^i \}_{ t \geq 0 } $.

\begin{prop}
	\label{prop:E convergence}
	Let $ E^i = \{ ( x , t ) \in \R^{ n + 1 } \times [ 0 , \infty ) \mid x \in E^i_t \} $. For the sequence $ \{ E^i \}_{ i = 1 }^\infty $, there exist a subsequence of $ \{ i_j \} \subset \{ i \} $ and a set of finite perimeter $ E $ such that
	\[
	E^{ i_j } \to E \textup{ in $ L^1_{ loc } ( \R^{ n + 1 } \times [ 0 , \infty ) ) $. }
	\]
	Moreover, for every $ t \in [ 0 , \infty ) $, there exists a subsequence $ \{ i'_j \} \subset \{ i_j \} $ which may depend on $ t $ such that
	\[
	E^{ i'_j }_t \to E_t \textup{ in $ L^1_{ loc } ( \R^{ n + 1 } ) $ },
	\]
	where $ E_t = \{ x \in \R^{ n + 1 } \mid ( x , t ) \in E \} $, and we have $ \lV \nabla \chi_{ E_t } \rV \leq \lV V_t \rV $.
\end{prop}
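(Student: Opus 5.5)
We need uniform local BV bounds for $\chi_{E^i}$ in space-time, then apply BV compactness, and finally pass to the limit for each fixed $t$.

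\emph{Spatial variation.} By (ii) of Definition \ref{def:genaralized BV} and \eqref{eq:compactness assumption 1},
\[
\int_0^T \lV \nabla \chi_{E^i_t} \rV (K)\, dt \le T\, C(T,K),
\]
uniformly in $i$.

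\emph{Time derivative.} Using \eqref{eq:generalized BV} with $\phi \in C^1_c(\R^{n+1}\times(0,T))$, we have
\[
\int_{E^i} \partial_t \phi \, dx\,dt = -\int_0^T\!\!\int \phi\, h^i\cdot\nu_{E^i_t}\, d\lV\nabla\chi_{E^i_t}\rV\, dt .
\]
Its absolute value is at most
\[
\sup|\phi| \,\Bigl(\int_0^T\!\!\int_K |h^i|^2\, d\lV V^i_t\rV\, dt\Bigr)^{1/2} (T\,C(T,K))^{1/2}.
\]
The curvature integral is bounded uniformly in $i$ by the a priori estimate \eqref{eq:compactness assumption 2}, which applies once the mass bound holds. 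Hence $\lV \partial_t \chi_{E^i}\rV$ is locally uniformly bounded, and so is $\lV \nabla' \chi_{E^i}\rV$.

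\emph{Space-time limit.} BV compactness (e.g.\ Maggi) plus a diagonal argument over exhausting compact sets gives a subsequence $E^{i_j}\to E$ in $L^1_{loc}(\R^{n+1}\times[0,\infty))$, with $E$ of locally finite perimeter.

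\emph{Time equicontinuity.} To get convergence at every $t$, not just a.e., we want a uniform modulus of continuity of $t\mapsto \chi_{E^i_t}$ in a weak or local $L^1$ sense. From \eqref{eq:generalized BV} with time-independent $\phi\in C^1_c(\R^{n+1})$ and Cauchy--Schwarz,
\[
\Bigl|\int_{E^i_{t_2}}\phi - \int_{E^i_{t_1}}\phi\Bigr| \le C(\phi,T)\,(t_2-t_1)^{1/2}.
\]
This is a uniform $C^{1/2}$ bound in the weak topology. It implies weak-$*$ (in $L^\infty$) convergence of $\chi_{E^{i_j}_t}$ for every $t$ to some limit $g_t$, after a diagonal choice over a countable dense set of times and of test functions.

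\emph{Main obstacle.} The weak limit $g_t$ need not be a characteristic function, and the statement asks for strong $L^1_{loc}$ convergence along a $t$-dependent subsequence. The fix is to combine the weak equicontinuity with the uniform perimeter bound, which holds for every $t$ by (ii) and \eqref{eq:compactness assumption 1}. For fixed $t$, compactness of sets with locally bounded perimeter extracts a further subsequence $i'_j$ along which $E^{i'_j}_t$ converges strongly in $L^1_{loc}$ to some set $F_t$. Strong convergence identifies $F_t$ with the weak limit, so $g_t=\chi_{F_t}$. We then define $E_t:=F_t$. For a.e.\ $t$ this agrees with the slice of $E$, by Fubini and the $L^1_{loc}$ space-time convergence, and modifying $E$ on a null set of times keeps it the same space-time set. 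This representative is consistent because $g_t$ is determined by the full subsequence $i_j$, and $t\mapsto g_t$ is weakly $C^{1/2}$, so $E_t$ is well defined for every $t$.

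\emph{Perimeter inequality.} Fix $t$ and $\phi\ge0$. Lower semicontinuity of perimeter under $L^1_{loc}$ convergence gives
\[
\lV\nabla\chi_{E_t}\rV(\phi)\le \liminf_j \lV\nabla\chi_{E^{i'_j}_t}\rV(\phi)\le \lim_j \lV V^{i'_j}_t\rV(\phi)=\lV V_t\rV(\phi).
\]
The last equality uses the convergence $\lV V^i_t\rV\to\lV V_t\rV$ for all $t$ provided by the Brakke compactness theorem \cite[Theorem 3.7]{tonegawa2019brakke}, taking the subsequence $i_j$ compatible with it from the start.
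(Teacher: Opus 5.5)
Your proof is correct, and its skeleton is the paper's. Both arguments take the same five steps: a uniform local bound on $\lV\nabla'\chi_{E^i}\rV$ from \eqref{eq:generalized BV}, Definition \ref{def:genaralized BV} (ii), \eqref{eq:compactness assumption 1} and \eqref{eq:compactness assumption 2}; BV compactness in space-time; slice-wise compactness at fixed $t$ from $\lV\nabla\chi_{E^i_t}\rV\le\lV V^i_t\rV$; redefinition of $E$ on a null set of times; and lower semicontinuity of perimeter. The paper gets the spatial and temporal variation in one estimate by testing with $X\in C^1_c(\R^{n+1}\times\R;\R^{n+2})$, which also produces the boundary term $\int_{E^i_0}X_t(x,0)\,dx$. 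Your restriction to $(0,T)$ is harmless, since $0\le\chi\le 1$.

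The genuine difference is your weak-$C^{1/2}$ equicontinuity step. The paper does not use it in this proposition, only later in Proposition \ref{prop:Holder continuity of E}. At each exceptional $t$ the paper just picks some further subsequence and declares its limit to be $E_t$, so a priori $E_t$ could depend on that choice. Your step makes $E_t$ canonical: it is the weak-$*$ limit along the whole sequence $i_j$, and it is automatically consistent with the a.e.\ slices.

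Your argument actually gives more than you claim. Once $g_t=\chi_{F_t}$ is known to be a characteristic function, $\int_K\lv\chi_{E^{i_j}_t}-\chi_{F_t}\rv\,dx=\int_K\big(\chi_{E^{i_j}_t}(1-2\chi_{F_t})+\chi_{F_t}\big)\,dx\to 0$. So strong $L^1_{loc}$ convergence holds along $i_j$ itself at every $t$; the $t$-dependent subsequence is only needed to learn that $g_t$ is a characteristic function. This subsequence-independence is what the paper implicitly needs in the proof of Proposition \ref{prop:Holder continuity of E}, where it passes to the limit at $t_1$ and $t_2$ along a common subsequence. That step is not justified by the paper's construction alone when both times are exceptional.

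The paper's version is shorter. Yours costs one extra estimate, the same Cauchy--Schwarz bound used in Proposition \ref{prop:Holder continuity of E}, and yields a cleaner, subsequence-independent statement.
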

\begin{proof}
	For any test vector field $ X = ( X_1 , \ldots , X_{ n + 1 } , X_t ) \in C^1_c ( \R^{ n + 1 } \times \R ; \R^{ n + 2 } ) $, from \eqref{eq:generalized BV}--\eqref{eq:compactness assumption 2} and Definition \ref{def:genaralized BV} (ii), it follows that
	\begin{equation*}
		\begin{split}
			&\bigg\lv \int \int_{ E^i } \dv X \, d x d t \bigg\rv
			= \bigg\lv \int_0^\infty \int_{ E_t^i } \nabla \cdot ( X_1 , \ldots , X_{ n + 1 } ) \, d x d t + \int_0^\infty \int_{ E_t^i } \partial_t X_t \, d x d t \bigg\rv\\
			&= \bigg\lv \int_0^\infty \int_{ \partial^* E_t^i } ( X_1 , \ldots , X_{ n + 1 } ) \cdot \nu_{ E_t^i } \, d \mathcal{ H }^n d t - \int_{ E_0^i } X_t ( x , 0 ) \, d x - \int_0^\infty \int_{ \partial^* E_t^i } X_t ( h^i \cdot \nu_{ E_t^i } ) \, d \mathcal{ H }^n d t \bigg\rv\\
			&\leq \sup_{ \R^{ n + 1 } \times \R } \lv X \rv \bigg( T \sup_{ t \in [ 0 , T ) } \lV \nabla \chi_{ E_t^i } \rV ( B_R ) + \mathcal{ L }^{ n + 1 } ( E_0^i \cap B_R ) + \int_0^T \int_{ B_R } \lv h^i \rv \, d \lV \nabla \chi_{ E_t^i } \rV d t \bigg)\\
			&\leq \sup_{ \R^{ n + 1 } \times \R } \lv X \rv \bigg( T \sup_{ t \in [ 0 , T ) } \lV V_t^i \rV ( B_R ) + \mathcal{ L }^{ n + 1 } ( B_R )\\
			&\quad \quad \quad + \Big( T \sup_{ t \in [ 0 , T ) } \lV V_t \rV ( B_R ) \Big)^{ 1 / 2 } \bigg( \int_0^T \int_{ B_R } \lv h^i \rv^2 \, d \lV V_t^i \rV d t \bigg)^{ 1 / 2 } \bigg) \leq C \sup_{ \R^{ n + 1 } \times \R } \lv X \rv,
		\end{split}
	\end{equation*}
	where we chose $ T > 0 $ and $ R > 0 $ so that $ \spt \lv X \rv \subset B_R \times ( - T , T ) $. Thus $ E^i $ is a set of locally finite perimeter and the upper bound of the perimeter measures $ \lV \nabla' \chi_{ E^i } \rV $ is independent of $ i $. By the compactness for sets of finite perimeter, taking a subsequence $ \{ i_j \} \subset \{ i \} $ if necessary, there is a set of finite perimeter $ E \subset \R^{ n + 1 } \times [ 0 , \infty ) $ such that $ E^{ i_j } \to E $ in $ L^1_{ loc } $.
	
	It is obvious that $ \chi_E \in BV_{ loc } ( \R^{ n + 1 } \times [ 0 , \infty ) ) $. Note that, by Fubini's Theorem, one can see $ \chi_{ E_t^i } \to \chi_{ E_t } $ in the $ L^1_{ loc } $ sense for almost every $ t \in ( 0 , \infty ) $. Choose $ t \in [ 0 , \infty ) $ such that this convergence does not hold. By Definition \ref{def:genaralized BV} (ii) and \eqref{eq:compactness assumption 1}, we see that
	\[
	\lV \nabla \chi_{ E_t^{ i_j } } \rV ( \phi ) \leq \lV V_t^{ i_j } \rV ( \phi ) \leq C ( \spt \phi ) \sup \lv \phi \rv
	\]
	for given $ t $ and for all $ \phi \in C^\infty_c ( \R^{ n + 1 } ; [ 0 , \infty ) ) $. By the compactness for sets of finite perimeter again, taking a subsequence $ \{ i'_j \} \subset \{ i_j \} $ if necessary, there is a set of finite perimeter $ E_t \subset \R^{ n + 1 } $ such that $ E_t^{ i'_j } \to E_t $ in $ L^1_{ loc } $. We may re-define $ E $ such that every time slice of $ E $ becomes the limit $ E_t $ while preserving the convergence $ E^{ i_j } \to E $ established above. The lower semi-continuity of the perimeter measure and Definition \ref{def:genaralized BV} (ii), $ \lV \nabla \chi_{ E_t } \rV \leq \lV V_t \rV $ holds. This proves the claim.
\end{proof}

It remains to prove \eqref{eq:generalized BV} for the pair $ ( \{ V_t \}_{ t \geq 0 } , \{ E_t \}_{ t \geq 0 } ) $. To this end, we verify that the pair $ ( \{ V_t \}_{ t \geq 0 } , \{ E_t \}_{ t \geq 0 } ) $ satisfies the assumptions of Theorem \ref{thm:L2 flow area change}. We note that, for a usual Brakke flow, Huisken's monotonicity formula gives the density bound assumption \eqref{itm:density assumption} (see \cite[Proposition 3.5]{tonegawa2019brakke}, for example). Moreover, by arguments entirely parallel to Lemma \ref{lem: S<<V} and Proposition \ref{prop:L2}, we obtain the following:

\begin{prop}
	For the pair $ ( \{ V_t \}_{ t \geq 0 } , \{ E_t \}_{ t \geq 0 } ) $ discussed above, we have
	\begin{enumerate}
		\item $ d \mathcal{ H }^n \lfloor_{ \partial^* E } \ll d \lV V_t \rV d t $ in $ \R^{ n + 1 } \times ( 0 , \infty ) $; and
		\item $ \{ V_t \}_{ t \geq 0 } $ is an $ L^2 $ flow with velocity $ v = h $.
	\end{enumerate}
\end{prop}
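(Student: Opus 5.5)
I will prove both parts by redoing the arguments of Lemma \ref{lem: S<<V} and Proposition \ref{prop:L2} for the sequence $(\{V_t^{i}\},\{E_t^{i}\})$ and its limit. In Lemma \ref{lem: S<<V}, the smooth approximating flows were replaced by generalized BV flows with $u\equiv 0$; in Proposition \ref{prop:L2}, the limit Brakke flow has $u\equiv0$, so the absorption term $\psi h\cdot u^\perp$ disappears. Throughout, write $\mu^{i} := d\lV V^{i}_t\rV dt$ and $\mu := d\lV V_t\rV dt$. By the Brakke compactness theorem (after relabelling the subsequence $\{i_j\}$), $\lV V_t^{i}\rV\to\lV V_t\rV$ for every $t$. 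Together with the uniform mass bound \eqref{eq:compactness assumption 1} and dominated convergence in $t$, this gives $\mu^{i}\rightharpoonup\mu$ as Radon measures on $\R^{n+1}\times(0,\infty)$.

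For (1), I test \eqref{eq:generalized BV} with $\phi\in C^1_c(\R^{n+1}\times(0,\infty))$ to get
\[
\int_{E^{i}}\partial_t\phi\,dxdt=-\int_0^\infty\int_{\R^{n+1}}\phi\,(h^{i}\cdot\nu_{E^{i}_t})\,d\lV\nabla\chi_{E^{i}_t}\rV dt .
\]
The measures $\alpha^{i}:=d\lV\nabla\chi_{E^{i}_t}\rV dt$ satisfy $\alpha^{i}\le\mu^{i}$, so they are locally uniformly bounded. Passing to a subsequence, $\alpha^{i}\rightharpoonup\alpha\le\mu$, and therefore $\alpha=f\mu$ with $0\le f\le1$. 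The functions $h^{i}\cdot\nu_{E^i_t}$ are bounded in $L^2(\alpha^{i})$ uniformly in $i$, by \eqref{eq:compactness assumption 2} and \eqref{eq:compactness assumption 1}. The measure-function compactness theorem (Theorem \ref{thm: measure function cptness}) then gives a limit $w\in L^2_{loc}(\alpha)$, and passing to the limit using $E^{i}\to E$ in $L^1_{loc}$ yields
\[
\int_E\partial_t\phi=-\int\phi\,wf\,d\mu .
\]
For the spatial part of $\nabla'\chi_E$, lower semicontinuity gives $\lV\nabla\chi_{E_t}\rV\le\lV V_t\rV$ for every $t$, so $d\nabla\chi_{E_t}dt$ has variation at most $\mu$. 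Hence the full space-time gradient $\nabla'\chi_E$ is a vector measure absolutely continuous with respect to $\mu$. This gives $\lV\nabla'\chi_E\rV=\mathcal H^{n+1}\lfloor_{\partial^*E}\ll\mu$, which is the claim (the dimension is $n+1$ in space-time; the statement's $\mathcal H^n$ should read as the perimeter measure of $E$).

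The step I expect to need the most care is identifying $\nabla'\chi_E$ with the decomposition above. One must check that the time-slices of $E$, as redefined in Proposition \ref{prop:E convergence}, agree with the $L^1_{loc}$ space-time limit for a.e.\ $t$. This follows from Fubini, and a null set of times does not change the distributional derivative.

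For (2), conditions (a)–(b) and (c'1) follow from the Brakke flow property of the limit: integrality a.e., $h\in L^2_{loc}(\mu)$, and Brakke's perpendicularity theorem. For (c'2), I take $\psi\ge0$. The Brakke inequality \eqref{eq:varifold Brakke ineq} over $[0,T]$ gives that $L\psi:=\int\!\!\int\partial_t\psi+\nabla\psi\cdot h\,d\mu\ge\int\!\!\int\psi|h|^2d\mu\ge0$ whenever $\psi$ vanishes near $t=0$, so $L$ is monotone. For $\phi\ge0$ supported in $\R^{n+1}\times(2\delta,T)$, I bound $\phi/\lV\phi\rV_{C^0}$ from above by $\Phi_\delta(t)\eta(x)$, where $\eta$ is a spatial cutoff equal to $1$ on the support of $\phi$. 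Then
\[
|L(\Phi_\delta\eta)|\le \frac{C}{\delta}\int_\delta^{2\delta}\lV V_t\rV(\spt\eta)\,dt+\|\nabla\eta\|_\infty\|h\|_{L^1(\mu,\,\spt\eta\times[0,T])},
\]
and the right-hand side is bounded by the local mass and the $L^2$ curvature bound. Monotonicity of $L$ then gives \eqref{eq:L2 flow} for nonnegative $\phi$. For general $\phi$, I split $\phi=\phi_+-\phi_-$; these are only Lipschitz, so I treat them by approximation, exactly as in Proposition \ref{prop:L2}.
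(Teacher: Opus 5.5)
Your proposal is correct and follows the paper's route. The paper justifies this proposition only by saying the arguments of Lemma \ref{lem: S<<V} and Proposition \ref{prop:L2} carry over, and you carry them over with the $u$-terms dropped. Your adjustments to that sketch are the right ones: the spatial cutoff $\eta$ (needed because only the local mass bounds \eqref{eq:compactness assumption 1} are available here), requiring $\psi$ to vanish near $t=0$ so that $L$ is monotone, and reading $\mathcal{H}^n\lfloor_{\partial^*E}$ as $\mathcal{H}^{n+1}\lfloor_{\partial^*E}=\lV\nabla'\chi_E\rV$.
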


Finally, once the H\"{o}lder continuity of $ \chi_E $ holds, the compactness follows from Theorem \ref{thm:L2 flow area change}.

\begin{prop}
	\label{prop:Holder continuity of E}
	Let $ E $ be as in Proposition \ref{prop:E convergence}, then
	\[
	\chi_E \in C^{ 1 / 2 }_{ loc } ( [ 0 , \infty ) ; L^1_{ loc } ( \R^{ n + 1 } ) ) \cap BV_{ loc } ( \R^{ n + 1 } \times [ 0 , \infty ) ).
	\]
\end{prop}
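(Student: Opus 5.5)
The $BV_{loc}$ part is already contained in Proposition \ref{prop:E convergence}: $E$ is a set of locally finite perimeter in $\R^{n+1}\times[0,\infty)$, being an $L^1_{loc}$ limit of the $E^{i}$ with locally uniformly bounded perimeters. So the work is the H\"older estimate in time. I will first prove a uniform estimate for each approximating flow $E^i$, and then pass to the limit through the time slices.

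\emph{Uniform estimate.} Fix $R>0$, $T>0$ and $0\le t_1<t_2\le T$. Take $\phi\in C^1_c(B_{2R})$ independent of $t$, with $0\le\phi\le1$ and $\phi\equiv1$ on $B_R$. The generalized BV identity \eqref{eq:generalized BV} for $(\{V^i_t\},\{E^i_t\})$ only tests $\int_{E^i_t}\phi$ against a fixed $\phi$. That is not enough to control the symmetric difference, so I will use a slicing device. For each time-independent $\psi\in C^1_c(B_{2R})$ with $\lv\psi\rv\le1$, the identity together with Cauchy--Schwarz and Definition \ref{def:genaralized BV} (ii) gives
\[
\Big\lv \int_{E^i_{t_2}}\psi - \int_{E^i_{t_1}}\psi\Big\rv \le \int_{t_1}^{t_2}\int \lv h^i\rv\, d\lV V^i_t\rV dt \le (t_2-t_1)^{1/2}\Big(\sup_{t\le T}\lV V^i_t\rV(B_{2R})\Big)^{1/2}\Big(\int_0^T\int_{B_{2R}}\lv h^i\rv^2 d\lV V^i_t\rV dt\Big)^{1/2}.
\]
By \eqref{eq:compactness assumption 1} and \eqref{eq:compactness assumption 2}, the right-hand side is at most $C(R,T)(t_2-t_1)^{1/2}$, uniformly in $i$. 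Now approximate $\psi=\phi(\chi_{E^i_{t_2}}-\chi_{E^i_{t_1}})$ by smooth functions bounded by $1$; convergence in $L^1$ is enough, by dominated convergence. This turns the left-hand side into $\int\phi\,\lv\chi_{E^i_{t_2}}-\chi_{E^i_{t_1}}\rv dx$, which is exactly the mechanism giving Theorem \ref{thm: existence for smooth u} (8). The important point is that $\psi$ is time-independent, so the $\partial_t\phi$ term in \eqref{eq:generalized BV} vanishes. We obtain
\[
\lv (E^i_{t_1}\triangle E^i_{t_2})\cap B_R\rv\le C(R,T)(t_2-t_1)^{1/2}\quad\text{for all }i.
\]

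\emph{Passage to the limit.} The main obstacle is this step: the time-slice convergence $E^{i'_j}_t\to E_t$ in Proposition \ref{prop:E convergence} uses subsequences that depend on $t$, so two different times need not share one. I handle this in three steps.
\begin{enumerate}
\item Let $D$ be a full-measure set of times at which $\chi_{E^{i_j}_t}\to\chi_{E_t}$ in $L^1_{loc}$ along the common subsequence $\{i_j\}$ (Fubini). For $t_1,t_2\in D$, lower semicontinuity of the $L^1(B_R)$ norm gives the estimate for $E$ directly.
\item For an arbitrary $t$, choose $s_k\in D$ with $s_k\to t$. By the uniform estimate, $\lv (E^{i'_j}_t\triangle E^{i'_j}_{s_k})\cap B_R\rv\le C\lv t-s_k\rv^{1/2}$ along the $t$-dependent subsequence. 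Passing to the limit in $j$ yields $\lv (E_t\triangle E_{s_k})\cap B_R\rv\le C\lv t-s_k\rv^{1/2}$.
\item Hence $\{E_{s}\}_{s\in D}$ is uniformly H\"older continuous and extends uniquely to all times. The redefined slices $E_t$ from Proposition \ref{prop:E convergence} coincide with this extension, and the triangle inequality extends the estimate to all pairs $t_1,t_2$.
\end{enumerate}
Since $R$ and $T$ are arbitrary, $\chi_E\in C^{1/2}_{loc}([0,\infty);L^1_{loc}(\R^{n+1}))$, which completes the proof.
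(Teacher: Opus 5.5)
Your proof is correct and follows essentially the same route as the paper. Both arguments derive a uniform-in-$i$ bound $\mathcal{L}^{n+1}\bigl((E^i_{t_1}\triangle E^i_{t_2})\cap B_R\bigr)\le C(t_2-t_1)^{1/2}$: test the generalized BV identity with a time-independent function, apply Cauchy--Schwarz with the uniform mass and curvature bounds, approximate the indicator of the symmetric difference, and then let $i\to\infty$. The one real difference is the limit step. The paper just asserts that a further subsequence makes the slices converge to $E_{t_1}$ and $E_{t_2}$ simultaneously. You instead handle the $t$-dependent subsequences explicitly, using the good times in $D$ and the triangle inequality, which is more careful and fills a point the paper passes over.
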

\begin{proof}
	Let $ \phi \in C_c^\infty ( \R^{ n + 1 } ) $ and we fix $ 0 \leq t_1 < t_2 $ arbitrarily. For these $ t_1 , t_2 $, from Proposition \ref{prop:E convergence}, it follows that $ E^{ i'_j }_{ t_1 } \to E_{ t_1 } $ and $ E^{ i'_j }_{ t_2 } \to E_{ t_2 } $ by taking a further subsequence if necessary. Choosing a cut-off function $ \eta \in C_c^\infty ( [ 0 , \infty ) ) $ such that $ \eta \equiv 1 $ on $ [ t_1 , t_2 ] $ implies
	\begin{equation*}
		\begin{split}
			&\bigg\lv \int_{ E_{ t_2 }^{ i'_j } } \phi ( x ) \, d x - \int_{ E_{ t_1 }^{ i'_j } } \phi ( x ) \, d x \bigg\rv
			= \bigg\lv \int_{ t_1 }^{ t_2 } \int_{ \partial^* E_t^{ i'_j } } \phi ( h^i \cdot \nu_{ E_t^{ i'_j } } ) \, d \mathcal{ H }^n d t \bigg\rv\\
			&\leq C ( t_2 - t_1 )^{ 1 / 2 } \bigg( \int_{ t_1 }^{ t_2 } \int \phi^2 \lv h^{ i'_j } \rv^2 \, d \lV V_t^{ i'_j } \rV d t \bigg)^{ 1 / 2 }
			\leq C ( t_2 - t_1 )^{ 1 / 2 },
		\end{split}
	\end{equation*}
	where $ C > 0 $ is independent of $ i'_j $ and we used \eqref{eq:generalized BV}--\eqref{eq:compactness assumption 2} and Definition \ref{def:genaralized BV} (ii). For any bounded open set $ U $, we may let $ A = U \cap E_{ t_2 }^{ i'_j } \setminus E_{ t_1 }^{ i'_j } $ or $ U \cap E_{ t_1 }^{ i'_j } \setminus E_{ t_2 }^{ i'_j } $ and approximate $ \chi_A $ by $ \phi $ to see that
	\[
	\mathcal{ L }^{ n + 1 } ( U \cap ( E_{ t_2 }^{ i'_j } \Delta E_{ t_1 }^{ i'_j } ) ) \leq C ( t_2 - t_1 )^{ 1 / 2 }.
	\]
	Letting $ i'_j \to \infty $, one can conclude that $ \chi_{ E_t } $ is $ 1 / 2 $-H\"{o}lder continuous in $ L^1_{ loc } $.
\end{proof}

\section{Proof of Theorem \ref{thm: main2}}
\label{sec: proof of main2}

\begin{proof}[Proof of Theorem \ref{thm: main2}]
	 We may assume that $ T^* \leq 2 \lv E ( 0 ) \rv / \lV V_0 \rV ( \R^2 )^2 =: T $, for example. Noting that one has the a priori estimates (Proposition \ref{prop: a priori}), one can test the formula \eqref{eq: BV_intro} with $ \phi \equiv 1 $.  Set $ v ( t ) := \lv E_i ( t ) \rv $. Since $ \nu_{ E_i ( t ) } \cdot h = - \nu_{ E_j ( t ) } \cdot h $ on $ \partial^* E_i ( t ) \cap \partial^* E_j ( t ) $ for $ i \neq j $ $ \mathcal{ H }^1 $-almost everywhere, summing over $ i = 1 , \ldots N - 1 $, \eqref{eq: BV_intro} implies
	 \[
	 	v^{ \prime } ( t ) = \int_{ \R^2 } ( h + u ) \cdot \nu_{ E ( t ) } \, d \lV \nabla \chi_{ E_i ( t ) } \rV \quad \text{for } a.e.\, t \geq 0.
	 \]
	 Set $ a ( t ) = \lV V_t \rV ( \R^2 ) $. By Brakke's inequality \eqref{eq: Brakke} with $ \phi \equiv 1 $, the upper derivative $ a_+^{ \prime } ( t ) $ of $ a ( t ) $ satisfies, for almost every $ t \geq 0 $,
	 \[
	 	- a_+^{ \prime } ( t ) \geq \int_{ \R^2 } h \cdot ( h + u^{ \perp } ) \, d \lV V_t \rV
	 \]
	 where we used the Cauchy--Schwarz inequality and $ \lV \nabla \chi_{ E_i ( t ) } \rV \leq \lV V_t \rV $ for all $ t \geq 0 $ and $ i $. Combining the above implies
	 \begin{equation*}
	 	\begin{split}
	 		&- v^{ \prime } ( t )
	 		\leq \big( \mathcal{ H }^1 ( \partial^* E ( t ) ) \big)^{ 1 / 2 } \bigg( \int_{ \R^2 } \lv h \rv^2 \, d \lV \nabla \chi_{ E ( t ) } \rV \bigg)^{ 1 / 2 }\\
	 		&\leq \big( \lV V_t \rV ( \R^2 ) \big)^{ 1 / 2 } \bigg( - a_+^{ \prime } ( t ) - \int_{ \R^2 } h \cdot u^{ \perp } \, d \lV V_t \rV \bigg)^{ 1 / 2 }
	 		\leq \bigg( - \frac{ ( a ( t )^2 )_+^{ \prime } }{ 2 } + a ( t ) \int_{ \R^2 } \lv h \cdot u \rv \, d \lV V_t \rV \bigg)^{ 1 / 2 }
	 	\end{split}
	 \end{equation*}
	 for almost every $ t \geq 0 $. Integrating over $ [ 0 , T ] $ and the H\"{o}lder inequality give
	 \begin{equation*}
	 	\begin{split}
	 		\lv E ( 0 ) \rv - \lv E ( T ) \rv
	 		&\leq \int_0^T \bigg( - \frac{ ( a ( t )^2 )_+^{ \prime } }{ 2 } + a ( t ) \int_{ \R^2 } \lv h \cdot u \rv \, d \lV V_t \rV \bigg)^{ 1 / 2 } d t\\
	 		&\leq \sqrt{ \frac{ T }{ 2 } } \bigg( \lV V_0 \rV ( \R^2 )^2 - \lV V_T \rV ( \R^2 )^2 + 2 \sup_{ 0 \leq t \leq T } \lV V_t \rV ( \R^2 ) \int_0^T \int_{ \R^2 } \lv h \cdot u \rv \, d \lV V_t \rV d t \bigg)^{ 1 / 2 }
	 	\end{split}
	 \end{equation*}
	 Since $ \lV V_t \rV ( \R^2 ) \neq 0 $ as long as $ E ( t ) \neq \emptyset $, the extinction time $ T^* $ is at least equal to the first time when $ E ( t ) = \emptyset $. Therefore, the above inequality implies that
	 \[
	 	T^* \geq \frac{ 2 \lv E ( 0 ) \rv^2 }{ \lV V_0 \rV ( \R^2 )^2 + 2 \sup_{ 0 \leq t \leq T } \lV V_t \rV ( \R^2 ) \int_0^T \int_{ \R^2 } \lv h \cdot u \rv \, d \lV V_t \rV d t } \geq \frac{ 2 \lv E ( 0 ) \rv^2 }{ \lV V_0 \rV ( \R^2 )^2 ( 1 + C ( u , T ) ) },
	 \]
	 where $ C ( u , T ) $ arises from Proposition \ref{prop: a priori}.
\end{proof}

\begin{remark}
	We assume that $ u $ is divergence-free, that is, $ \dv u = 0 $, $ N = 2 $ and every connected component of $ E_1 ( t ) $ is a simply connected bounded open set for almost every $ t \geq 0 $. Then, one can formally obtain the upper bound of the extinction time as follows: When $ N = 2 $, Liu--Tonegawa showed that there exists a finite number of connected embedded $ W^{ 2 , 2 } $ curves such that $ V_t $ is an associated varifold of the $ W^{ 2 , 2 } $ curves and each curve may intersect other curves only tangentially (\cite[Theorem 2.2 (V3)]{liu2024existence}). Thus, by a suitable approximation and the Gauss--Bonnet theorem, one may have
	\begin{equation*}
		\begin{split}
			\lv E_1 ( t ) \rv - \lv E_1 ( 0 ) \rv
			&= \int_0^t \int_{ \R^2 } ( h + u ) \cdot \nu_{ E_1 ( s ) } \, d \lV \nabla \chi_{ E_1 ( s ) } \rV d s \leq - 2 \pi t + \int_0^t \int_{ E_1 ( s ) } \dv u \, d x d t = - 2 \pi t
		\end{split}
	\end{equation*}
	for $ 0 < t < \infty $, where we used the Gauss--Green theorem. Therefore, one may obtain $ T^* \leq \lv E_1 ( 0 ) \rv / ( 2 \pi ) $. However, if the effect of the forcing term $ u $ is strong, a singular phenomena may occur in which a hole opens inside $ E_1 ( t ) $ within very short time from $ t = 0 $ as Figure 1. If $ E_1 ( t ) $ has a hole inside, since
	\[
		\frac{ d }{ d t } \lv E_1 ( t ) \rv = 0
	\]
	by the Gauss--Bonnet theorem until a hole shrinks to a point, the area of $ E_1 ( t ) $ hardly decreases at all for quite a long time and the extinction time can be made very large, even if $ \dv u = 0 $.
\end{remark}

\begin{figure}[h]
	\centering
	\includegraphics[width=0.8\linewidth]{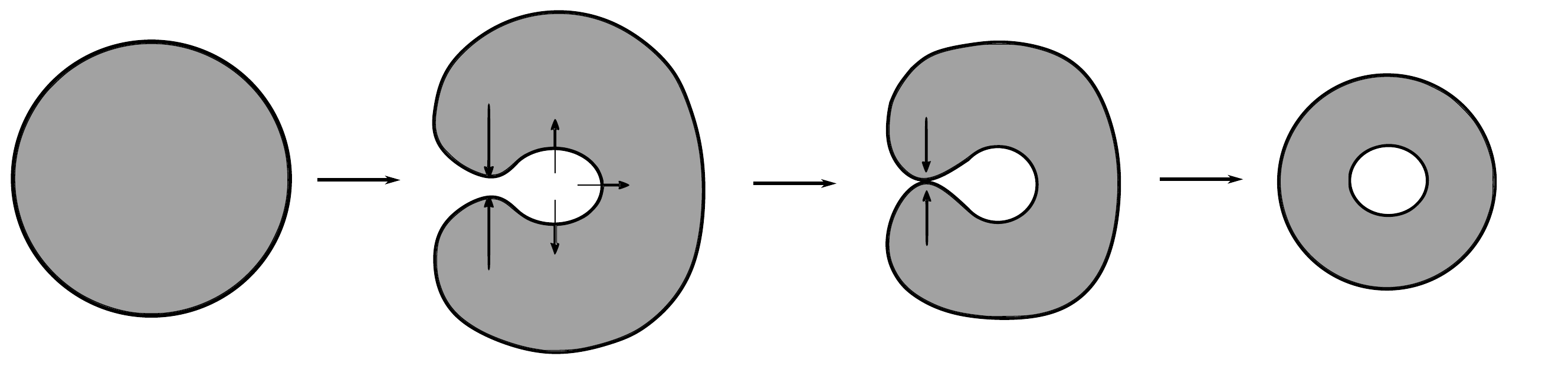}
	\caption{A large forcing term $ u $ may cause a hole in the domain.}
\end{figure}

\appendix

\section{Measure-Function Pairs}
Here, we recall the notion of measure-function pairs introduced by Hutchinson in \cite{hutchinson1986second}.
\begin{definition}
	Let $ E \subset \R^n $ be an open set and let $ \mu $ be a Radon measure on $ E $. Suppose $ f \in L^1 ( \mu ; \R^d ) $. Then we say that $ ( \mu , f ) $ is $ \R^d $-valued measure-function pair over $ E $.
\end{definition}
We define the notion of convergence for a sequence of $ \R^d $-valued measure-function pairs over $ E $.
\begin{definition}
	Let $ \{ ( \mu_i , f_i ) \}_{ i = 1 }^{ \infty } $ and $ ( \mu , f ) $ be $ \R^d $-valued measure-function pairs over $ E $. Suppose
	\[
	\mu_i \rightharpoonup \mu
	\] 
	as Radon measure on $ E $. Then we call $ ( \mu_i , f_i ) $ converges to $ ( \mu , f ) $ in the weak sense if
	\[
	\int_E f_i \cdot \phi\ d \mu_i \to \int_E f \cdot \phi\ d \mu
	\]
	for all $ \phi \in C^0_c ( E ; \R^d ) $.
\end{definition}
We present a less general version of \cite[Theorem 4.4.2]{hutchinson1986second} to the extent that it can be used in this paper.
\begin{theorem}
	\label{thm: measure function cptness}
	Suppose that $ \R^d $-valued measure-function pairs $ \{ ( \mu_i , f_i ) \}_{ i = 1 }^{ \infty } $ are satisfied
	\[
	\sup_i \int_E | f_i |^2\ d \mu_i < \infty.
	\]
	Then the following hold:
	\begin{description}
		\item[\textup{(1)}] There exist a subsequence $ \{ ( \mu_{ i_j } , f_{ i_j } ) \}_{ j = 1 }^{ \infty } $ and $ \R^d $-valued measure-function pair $ ( \mu , f ) $ such that $ ( \mu_{ i_j } , f_{ i_j } ) $ converges to $ ( \mu , f ) $ as measure-function pair.
		\item[\textup{(2)}] If $ ( \mu_{ i_j } , f_{ i_j } ) $ converges to $ ( \mu , f ) $ then
		\[
		\int_E | f |^2\ d \mu \leq \liminf_{ j \to \infty } \int_E | f_{ i_j } |^2\ d \mu_{ i_j } < \infty.
		\]
	\end{description}
\end{theorem}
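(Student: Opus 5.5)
The plan is the standard duality argument: treat $f_i\,\mu_i$ as vector-valued Radon measures, extract weak limits of both $\mu_i$ and $f_i\,\mu_i$, and use Cauchy--Schwarz to show that the limit vector measure has an $L^2(\mu)$ density with respect to $\mu$. I will use one additional hypothesis, implicit in the statement and available in every application in this paper: the measures are locally uniformly bounded, $\sup_i \mu_i(K) < \infty$ for every compact $K \subset E$. In Lemma \ref{lem: S<<V} this comes from \eqref{eq: mass a priori} together with Theorem \ref{thm: existence for smooth u} (6). Accordingly, I read the conclusion $f \in L^1(\mu;\R^d)$ locally, i.e.\ as $f \in L^1_{loc}(\mu;\R^d)$.

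\textbf{Step 1: extraction.} Set $L := \sup_i \int_E |f_i|^2 \, d\mu_i$ and $\nu_i := f_i\,\mu_i$, an $\R^d$-valued Radon measure. For compact $K$, Cauchy--Schwarz gives
\[
|\nu_i|(K) \le \Big(\int_K |f_i|^2\,d\mu_i\Big)^{1/2}\mu_i(K)^{1/2} \le L^{1/2}\,\mu_i(K)^{1/2},
\]
so both $\{\mu_i\}$ and $\{\nu_i\}$ are locally uniformly bounded. Weak-$*$ compactness of Radon measures then yields a common subsequence with $\mu_{i_j} \rightharpoonup \mu$ and $\nu_{i_j} \rightharpoonup \nu$.

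\textbf{Step 2: identifying the density.} For $\phi \in C^0_c(E;\R^d)$, Cauchy--Schwarz and the weak convergence of $\mu_{i_j}$ tested against $|\phi|^2 \in C^0_c(E)$ give
\[
\Big|\int_E \phi\cdot d\nu\Big| = \lim_j \Big|\int_E f_{i_j}\cdot\phi\,d\mu_{i_j}\Big|
\le \liminf_j \Big(\int_E |f_{i_j}|^2 d\mu_{i_j}\Big)^{1/2}\Big(\int_E|\phi|^2 d\mu\Big)^{1/2}.
\]
Thus $\phi \mapsto \int \phi\cdot d\nu$ is bounded in the $L^2(\mu;\R^d)$ norm on $C^0_c(E;\R^d)$. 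Since $C^0_c$ is dense in $L^2(\mu)$ for a Radon measure $\mu$ on the open set $E$, this functional extends to all of $L^2(\mu;\R^d)$. The Riesz representation theorem then gives $f \in L^2(\mu;\R^d)$ with $\nu = f\,\mu$ and
\[
\|f\|_{L^2(\mu)} \le \liminf_j \|f_{i_j}\|_{L^2(\mu_{i_j})}.
\]
Because $\mu$ is locally finite, $f \in L^1_{loc}(\mu)$. The identity $\nu = f\,\mu$ is exactly the measure-function pair convergence, which proves (1), and the norm bound is the inequality in (2) for this subsequence.

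\textbf{Step 3: part (2) for an arbitrary convergent subsequence.} Suppose $(\mu_{i_j}, f_{i_j})$ converges to some pair $(\mu,f)$. The Cauchy--Schwarz estimate of Step 2 holds verbatim with $\int\phi\cdot d\nu$ replaced by $\int f\cdot\phi\,d\mu$. Now choose $\phi_k \in C^0_c(E;\R^d)$ with $\phi_k \to f$ in $L^2(\mu)$ and pass to the limit in $k$. This requires $f \in L^2_{loc}(\mu)$, which I obtain a posteriori: the estimate shows that $\phi\mapsto\int f\cdot\phi\,d\mu$ is $L^2(\mu)$-bounded, so $f$ is the Riesz representative of that functional. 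The limit gives $\int_E|f|^2\,d\mu \le \liminf_j \int_E|f_{i_j}|^2\,d\mu_{i_j}$.

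\textbf{Main obstacle.} The analytic content is only the Cauchy--Schwarz/Riesz step. The delicate points are the local uniform mass bound on $\mu_i$, which is needed for compactness but absent from the literal hypotheses (I assume it as stated above), and the density of $C^0_c$ in $L^2(\mu)$ for Radon measures on an open set, which is standard.
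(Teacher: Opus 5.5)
Your argument is correct, but it does not follow the paper's route, because the paper gives no proof of its own. It simply quotes a special case of Hutchinson's Theorem 4.4.2, a result for general convex integrands with superlinear growth; the statement here is the case $F(\lambda)=\lv\lambda\rv^2$.

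You instead prove the quadratic case directly, in three moves:
\begin{enumerate}
\item weak-$*$ compactness for $\mu_i$ and for $f_i\,\mu_i$;
\item Cauchy--Schwarz, to see that the limit vector measure is a functional bounded in $L^2(\mu;\R^d)$;
\item the Riesz representation theorem in that Hilbert space, which gives the density and the norm bound at once.
\end{enumerate}
This is elementary and self-contained. It even yields $f\in L^2(\mu;\R^d)$ globally with $\int_E\lv f\rv^2\,d\mu\le\sup_i\int_E\lv f_i\rv^2\,d\mu_i$, which is all the paper needs. Citing Hutchinson buys generality ($L^q$ bounds for $1<q<\infty$, and more general convex integrands) that is never used here.

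The extra hypothesis you add is not a defect of your proof but a genuine omission in the statement as quoted. In Hutchinson's formulation $\mu_i\rightharpoonup\mu$ is assumed. Without some local uniform mass bound, part (1) is false: take $\mu_i=i\,\delta_{x_0}$ with $x_0\in E$ and $f_i\equiv 0$. In Lemma \ref{lem: S<<V} the paper has already extracted $d\lV\nabla\chi_{E_i^{(m)}(t)}\rV\,dt\rightharpoonup\alpha_i$ before invoking the theorem, so the hypothesis is available exactly where it is used. Your reading of $L^1$ as $L^1_{loc}$ also matches Hutchinson's definition of a measure-function pair.

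The one step worth writing out is in Step 3. Identifying $f$ with the Riesz representative $g$ uses $f\in L^1_{loc}(\mu)$: then $(f-g)\,\mu$ is a vector Radon measure vanishing on $C^0_c(E;\R^d)$, hence zero, so $f=g$ $\mu$-a.e. The $L^2(\mu)$ bound for $f$ then follows immediately.
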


\bibliography{myref.bib}
\bibliographystyle{alpha}

\end{document}